\DeclareFontFamily{OT1}{pzc}{}
\DeclareFontShape{OT1}{pzc}{m}{it}{<-> s * [1.10] pzcmi7t}{}
\DeclareMathAlphabet{\mathpzc}{OT1}{pzc}{m}{it}
\newcommand{\cref}[1]{\zcref{#1}}
\newcommand{\Cref}[1]{\zcref[S]{#1}}
\newcommand\C{\mathbb{C}}
\newcommand\N{\mathbb{N}}
\newcommand\Z{\mathbb{Z}}
\newcommand\kk{\Bbbk}
\newcommand\one{\mathds{1}}
\newcommand\bA{\mathbf{A}}
\newcommand\bi{\mathbf{i}}
\newcommand\bu{\bullet}
\newcommand\cC{\mathcal{C}}
\newcommand\cD{\mathcal{D}}
\newcommand\cM{\mathcal{M}}
\newcommand\cN{\mathcal{N}}
\newcommand\op{\mathrm{op}}
\newcommand\rev{\textup{rev}}
\newcommand\triv{\textup{triv}}
\newcommand\md{\textup{-mod}}
\newcommand\tmod{\textup{-tmod}}
\newcommand\Vset{I_V}                                   
\newcommand{\fg}{\mathfrak{g}}
\newcommand{\fh}{\mathfrak{h}}
\newcommand{\ft}{\mathfrak{t}}
\newcommand{\fgl}{\mathfrak{gl}}
\newcommand{\osp}{\mathfrak{osp}}
\newcommand{\OSp}{\mathrm{OSp}}
\newcommand{\va}{\varsigma}
\newcommand{\tva}{\tilde{\va}}
\newcommand{\Ualg}{\mathrm{U}}
\newcommand{\Ui}{\Ualg^\imath}
\newcommand{\UiA}[1][A]{\Ui_{#1}}
\newcommand{\Uis}{\Ui_\sigma}
\newcommand{\UisA}[1][A]{\Ualg_{\sigma,#1}^\imath}
\newcommand{\Us}{\Ualg_\sigma}
\newcommand\Brauer{\mathpzc{Brauer}}
\newcommand\cEnd{\mathpzc{End}}     
\newcommand\iqB{\mathpzc{B}}        
\newcommand\OS{\mathpzc{OS}}        
\newcommand\DS{\mathpzc{DS}}        
\newcommand{\upobj}{{\mathord{\uparrow}}}
\newcommand{\downobj}{{\mathord{\downarrow}}}
\newcommand{\Bobj}{\mathsf{B}}
\newcommand\bF{\mathbf{F}}
\newcommand\bG{\mathbf{G}}
\newcommand\bH{\mathbf{H}}
\newcommand\bR{\mathbf{R}}
\newcommand\bRA[1][A]{\bR^{#1}}
\newcommand\word{\langle \upobj,\downobj \rangle}
\newcommand\fm{\mathfrak{m}}
\DeclareMathOperator{\Aut}{Aut}
\DeclareMathOperator{\codom}{codom}             
\DeclareMathOperator{\coev}{coev}
\DeclareMathOperator{\dom}{dom}                 
\DeclareMathOperator{\End}{End}
\DeclareMathOperator{\ev}{ev}
\DeclareMathOperator{\flip}{flip}
\DeclareMathOperator{\Hom}{Hom}
\DeclareMathOperator{\id}{id}
\DeclareMathOperator{\Mor}{Mor}                 
\DeclareMathOperator{\Res}{Res}                 
\tikzset{multi/.style={very thick}}
\tikzset{anchorbase/.style={>=To,baseline={([yshift=-0.5ex]current bounding box.center)}}}
\tikzset{ 
    centerzero/.style={>=To,baseline={([yshift=-0.5ex](#1))}},
    centerzero/.default={0,0}
}
\tikzset{cscale/.style={xscale=0.65,yscale=0.5}}        
\tikzset{wipe/.style={white,line width=4pt}}
\newcommand{\strandlabel}[1]{$\scriptstyle{#1}$}
\newcommand{\botlabel}[1]{node[anchor=north] {\strandlabel{#1}}}
\newcommand{\toplabel}[1]{node[anchor=south] {\strandlabel{#1}}}
\newcommand{\braidup}{to[out=up,in=down]}
\newcommand{\braiddown}{to[out=down,in=up]}
\newcommand{\coupon}[2]{ 
    \draw (#1) node[inner sep=2pt,draw,fill=white,rounded corners] {$\scriptstyle{#2}$}
}
\newcommand{\opendot}[1]{
    \node[white] at (#1) {$\scriptstyle{\bullet}$};
    \node at (#1) {$\scriptstyle{\circ}$}
}
\newcommand{\bubright}[1]{
    \draw[->] (#1)+(0.2,0) arc(360:0:0.2)
}
\newcommand{\bubun}[1]{
    \draw (#1)+(0.2,0) arc(0:360:0.2)
}
\newcommand{\rightbub}{
    \begin{tikzpicture}[centerzero]
        \bubright{0,0};
    \end{tikzpicture}
}
\newcommand{\posupcross}{
    \begin{tikzpicture}[centerzero]
        \draw[->] (0.2,-0.2) -- (-0.2,0.2);
        \draw[wipe] (-0.2,-0.2) -- (0.2,0.2);
        \draw[->] (-0.2,-0.2) -- (0.2,0.2);
    \end{tikzpicture}
}
\newcommand{\poscross}{
    \begin{tikzpicture}[centerzero]
        \draw (0.2,-0.2) -- (-0.2,0.2);
        \draw[wipe] (-0.2,-0.2) -- (0.2,0.2);
        \draw (-0.2,-0.2) -- (0.2,0.2);
    \end{tikzpicture}
}
\newcommand{\negupcross}{
    \begin{tikzpicture}[centerzero]
        \draw[->] (-0.2,-0.2) -- (0.2,0.2);
        \draw[wipe] (0.2,-0.2) -- (-0.2,0.2);
        \draw[->] (0.2,-0.2) -- (-0.2,0.2);
    \end{tikzpicture}
}
\newcommand{\negcross}{
    \begin{tikzpicture}[centerzero]
        \draw (-0.2,-0.2) -- (0.2,0.2);
        \draw[wipe] (0.2,-0.2) -- (-0.2,0.2);
        \draw (0.2,-0.2) -- (-0.2,0.2);
    \end{tikzpicture}
}
\newcommand{\posrightcross}{
    \begin{tikzpicture}[centerzero]
        \draw[->] (-0.2,-0.2) -- (0.2,0.2);
        \draw[wipe] (0.2,-0.2) -- (-0.2,0.2);
        \draw[<-] (0.2,-0.2) -- (-0.2,0.2);
    \end{tikzpicture}
}
\newcommand{\negrightcross}{
    \begin{tikzpicture}[centerzero]
        \draw[<-] (0.2,-0.2) -- (-0.2,0.2);
        \draw[wipe] (-0.2,-0.2) -- (0.2,0.2);
        \draw[->] (-0.2,-0.2) -- (0.2,0.2);
    \end{tikzpicture}
}
\newcommand{\posdowncross}{
    \begin{tikzpicture}[centerzero]
        \draw[<-] (0.2,-0.2) -- (-0.2,0.2);
        \draw[wipe] (-0.2,-0.2) -- (0.2,0.2);
        \draw[<-] (-0.2,-0.2) -- (0.2,0.2);
    \end{tikzpicture}
}
\newcommand{\negdowncross}{
    \begin{tikzpicture}[centerzero]
        \draw[<-] (-0.2,-0.2) -- (0.2,0.2);
        \draw[wipe] (0.2,-0.2) -- (-0.2,0.2);
        \draw[<-] (0.2,-0.2) -- (-0.2,0.2);
    \end{tikzpicture}
}
\newcommand{\negleftcross}{
    \begin{tikzpicture}[centerzero]
        \draw[->] (0.2,-0.2) -- (-0.2,0.2);
        \draw[wipe] (-0.2,-0.2) -- (0.2,0.2);
        \draw[<-] (-0.2,-0.2) -- (0.2,0.2);
    \end{tikzpicture}
}
\newcommand{\posleftcross}{
    \begin{tikzpicture}[centerzero]
        \draw[<-] (-0.2,-0.2) -- (0.2,0.2);
        \draw[wipe] (0.2,-0.2) -- (-0.2,0.2);
        \draw[->] (0.2,-0.2) -- (-0.2,0.2);
    \end{tikzpicture}
}
\newcommand{\symcross}{
    \begin{tikzpicture}[centerzero]
        \draw (-0.2,-0.2) -- (0.2,0.2);
        \draw (0.2,-0.2) -- (-0.2,0.2);
    \end{tikzpicture}
}
\newcommand{\thickstrand}[1]{
    \begin{tikzpicture}[scale=0.5,centerzero]
        \draw[multi] (0,-0.4) \botlabel{#1} -- (0,0.4);
    \end{tikzpicture}
}
\newcommand{\thickstrandup}[1]{
    \begin{tikzpicture}[scale=0.5,centerzero]
        \draw[->,thick] (0,-0.4) \botlabel{#1} -- (0,0.4);
    \end{tikzpicture}
}
\newcommand{\upstrand}{
    \begin{tikzpicture}[centerzero]
        \draw[->] (0,-0.2) -- (0,0.2);
    \end{tikzpicture}
}
\newcommand{\strand}{
    \begin{tikzpicture}[centerzero]
        \draw[-] (0,-0.2) -- (0,0.2);
    \end{tikzpicture}
}
\newcommand{\downstrand}{
    \begin{tikzpicture}[centerzero]
        \draw[<-] (0,-0.2) -- (0,0.2);
    \end{tikzpicture}
}
\newcommand{\togupdown}{
    \begin{tikzpicture}[centerzero]
        \draw[->] (0,0) -- (0,0.2);
        \draw[->] (0,0) -- (0,-0.2);
        \opendot{0,0};
    \end{tikzpicture}
}
\newcommand{\togdownup}{
    \begin{tikzpicture}[centerzero]
        \draw[->] (0,0.2) -- (0,0);
        \draw[->] (0,-.2) -- (0,0);
        \opendot{0,0};
    \end{tikzpicture}
}
\newcommand{\rightcup}{
    \begin{tikzpicture}[anchorbase]
        \draw[->] (-0.15,0.15) -- (-0.15,0) arc(180:360:0.15) -- (0.15,0.15);
    \end{tikzpicture}
}
\newcommand{\leftcup}{
    \begin{tikzpicture}[anchorbase]
        \draw[<-] (-0.15,0.15) -- (-0.15,0) arc(180:360:0.15) -- (0.15,0.15);
    \end{tikzpicture}
}
\newcommand{\rightcap}{
    \begin{tikzpicture}[anchorbase]
        \draw[->] (-0.15,-0.15) -- (-0.15,0) arc(180:0:0.15) -- (0.15,-0.15);
    \end{tikzpicture}
}
\newcommand{\leftcap}{
    \begin{tikzpicture}[anchorbase]
        \draw[<-] (-0.15,-0.15) -- (-0.15,0) arc(180:0:0.15) -- (0.15,-0.15);
    \end{tikzpicture}
}
\newcommand{\Bcup}{
    \begin{tikzpicture}[anchorbase]
        \draw[-] (-0.15,0.15) -- (-0.15,0) arc(180:360:0.15) -- (0.15,0.15);
    \end{tikzpicture}
}
\newcommand{\Bcap}{
    \begin{tikzpicture}[anchorbase]
        \draw[-] (-0.15,-0.15) -- (-0.15,0) arc(180:0:0.15) -- (0.15,-0.15);
    \end{tikzpicture}
}
\newcommand{\Bbub}{
    \begin{tikzpicture}[centerzero]
        \bubun{0,0};
    \end{tikzpicture}
}
\newtheorem{theo}{Theorem}[section]
\newtheorem{prop}[theo]{Proposition}
\newtheorem{lem}[theo]{Lemma}
\newtheorem{cor}[theo]{Corollary}
\theoremstyle{definition}
\newtheorem{defin}[theo]{Definition}
\newtheorem{rem}[theo]{Remark}
\numberwithin{equation}{section}
    \newcommand{\acomments}[1]{
        \ \\
        {\color{red}
            \textbf{AS:} #1
        }
        \ \\
    }
    \newcommand{\ycomments}[1]{
        \ \\
        {\color{blue}
            \textbf{YS:} #1
        }
        \ \\
    }
    \newcommand{\question}[1]{
        \ \\
        {\color{blue}
            \textbf{Question:} #1
        }
        \ \\
    }
        \newcommand{\acomments}[1]{}
        \newcommand{\ycomments}[1]{}
        \newcommand{\question}[1]{}
    \newcommand{\details}[1]{
        \ \\
        {\color{OliveGreen}
            \textbf{Details:} #1
        }
        \\
    }
    \newcommand{\details}[1]{\ignorespaces}
\begin{document}

\title{The disoriented skein and \MakeLowercase{i}quantum Brauer categories}

\author{Hadi Salmasian}
\address[H.S.]{
    Department of Mathematics and Statistics \\
    University of Ottawa \\
    Ottawa, ON, K1N 6N5, Canada
}
\urladdr{\href{https://mysite.science.uottawa.ca/hsalmasi/}{mysite.science.uottawa.ca/hsalmasi/}, \textrm{\textit{ORCiD}:} \href{https://orcid.org/0000-0002-1073-7183}{orcid.org/0000-0002-1073-7183}}
\email{hadi.salmasian@uottawa.ca}

\author{Alistair Savage}
\address[A.S.]{
    Department of Mathematics and Statistics \\
    University of Ottawa \\
    Ottawa, ON, K1N 6N5, Canada
}
\urladdr{\href{https://alistairsavage.ca}{alistairsavage.ca}, \textrm{\textit{ORCiD}:} \href{https://orcid.org/0000-0002-2859-0239}{orcid.org/0000-0002-2859-0239}}
\email{alistair.savage@uottawa.ca}

\author{Yaolong Shen}
\address[Y.S.]{
    Department of Mathematics and Statistics \\
    University of Ottawa \\
    Ottawa, ON, K1N 6N5, Canada
}
\urladdr{\href{https://sites.google.com/virginia.edu/yaolongshen}{sites.google.com/virginia.edu/yaolongshen}, \textrm{\textit{ORCiD}:} \href{https://orcid.org/0000-0002-8840-3394}{orcid.org/0000-0002-8840-3394}}
\email{yshen5@uottawa.ca}

\begin{abstract}
    We develop a diagrammatic approach to the representation theory of the quantum symmetric pairs corresponding to orthosymplectic Lie superalgebras inside general linear Lie superalgebras.  Our approach is based on the disoriented skein category, which we define as a module category over the framed HOMFLYPT skein category.  The disoriented skein category admits full incarnation functors to the categories of modules over the iquantum enveloping algebras corresponding to the quantum symmetric pairs, and it can be viewed as an interpolating category for these categories of modules.  We define an equivalence of module categories between the disoriented skein category and the iquantum Brauer category (also known as the $q$-Brauer category), after endowing the latter with the structure of a module category over the framed HOMFLYPT skein category. The disoriented skein category has some advantages over the iquantum Brauer category, possessing duality structure and allowing the incarnation functors to be \emph{strict} morphisms of module categories. Finally, we construct explicit bases for the morphism spaces of the disoriented skein and iquantum Brauer categories. 
\end{abstract}

\subjclass[2020]{18M15, 18M30, 17B37}

\keywords{Quantum group, quantum enveloping algebra, quantum symmetric pair, iquantum group, monoidal category, module category, string diagram, graphical calculus}

\ifboolexpr{togl{comments} or togl{details}}{%
  {\color{magenta}DETAILS OR COMMENTS ON}
}{%
}

\maketitle
\thispagestyle{empty}

\tableofcontents

\section{Introduction}

In this paper we study a diagrammatic approach to the theory of quantum symmetric pairs, using the string diagram calculus of monoidal categories and module categories over them.  Such diagrammatic methods have proven to be extremely valuable in representation theory, especially via their role in the categorification of quantum groups and connections to link invariants.  However, the use of these methods in the theory of quantum symmetric pairs has only just begun; see \cite{BSWW18,BWW23,BWW25}.

Associated to a Lie algebra $\fg$ and an involutive Lie algebra automorphism $\theta \colon \fg \to \fg$, one has the symmetric pair $(\fg,\fg^\theta)$.  The universal enveloping algebra $U(\fg^\theta)$ is a Hopf subalgebra of $U(\fg)$.  Thus, $U(\fg^\theta)\md$ is a monoidal category and the restriction functor $U(\fg)\md \to U(\fg^\theta)\md$ is monoidal.  Moving to the setting of quantum enveloping algebras, the situation becomes more subtle since the quantum enveloping algebra $U_q(\fg^\theta)$ is not naturally a Hopf subalgebra of $U_q(\fg)$.  A general theory of quantum symmetric pairs was developed for all finite types by Letzter in \cite{Let99} and then extended to the Kac--Moody setting by Kolb in \cite{Kol14}.  In the quantum world, $U(\fg^\theta)$ is replaced by a right coideal subalgebra of $U_q(\fg)$ denoted by $\Ui$.  In recent works extending this theory to the super setting and developing canonical basis theory, $\Ui$ is often referred to as an \emph{iquantum enveloping algebra}.  The category $\Ui\md$ is not a monoidal category, but it is a right module category over the monoidal category $U_q(\fg)$.  The iquantum enveloping algebras and their representation theory have attracted increasing interest in recent years.  It is becoming apparent that much of the theory of quantum enveloping algebras has natural analogues in the iquantum setting.  We refer the reader to the exposition \cite{Wan23} for an overview of this development.  In particular, the theory of quantum symmetric pairs has been extended to the case of Lie \emph{super}algebras in \cite{KY20,She25,SW24}.

In the current paper, we focus on the case where $\fg = \fgl(m|2n)$ is the general linear Lie superalgebra and $\fg^\theta = \osp(m|2n)$ is the orthosymplectic Lie superalgebra.  Many features of the representation theory of $\Ualg = U_q(\fg)$ are captured in the \emph{framed HOMFLYPT skein category}, or \emph{oriented skein category} for short, denoted $\OS(q,t)$.  This diagrammatic monoidal category, first introduced in \cite[\S5.2]{Tur89}, is a quotient of the category of framed oriented tangles and underpins the HOMFLYPT link invariant.  The connection to representation theory arises from the fact that there is a full monoidal functor
\[
    \bR_\OS \colon \OS(q,q^{m-2n}) \to \Ualg\md.
\]
The analogous diagrammatic category for $U_q(\osp(m|2n))$ is the \emph{Kauffman category}, first introduced in \cite[\S7.7]{Tur89}.  However, since the usual quantum enveloping algebra $U_q(\osp(m|2n))$ is not the correct starting point for quantizing the symmetric pair $(\fg,\fg^\theta)$, the Kauffman category is not well suited for the diagrammatic study of the representation theory of $\Ui\md$.

Because $\Ui\md$ is a right module category over $\Ualg\md$, it is natural to expect that a diagrammatic description of $\Ui\md$ should be given by a right module category over $\OS(q,t)$.  We restrict our attention to the category $\Ualg\tmod$ of tensor modules.  By definition, this is the full subcategory of $\Ualg$-modules whose objects are finite direct summands of tensor products of the natural module $V^+$ and its dual $V^-$.  A key observation is that the natural module and its dual become isomorphic after restriction to $\Ui$.  Thus, there is an isomorphism (\cref{varphi})
\[
    \varphi \colon V^- \xrightarrow{\cong} V^+ \qquad \text{of $\Ui$-modules}.
\]
It turns out that, in a way that we make precise in the current paper, this isomorphism determines the module category structure of $\Ui\tmod$ over $\Ualg\tmod$.

The oriented skein category $\OS(q,t)$ is generated by two objects, $\upobj$ and $\downobj$, which are the diagrammatic analogues of the natural module and its dual.  We define the \emph{disoriented skein category}, $\DS(q,t)$, to be the right module category over $\OS(q,t)$ generated by mutually inverse isomorphisms
\[
    \togupdown \colon \downobj \to \upobj,\qquad
    \togdownup \colon \upobj \to \downobj,
\]
subject to relations that can be found in \cref{DStoggles,DScurls}.  Then, in \cref{subsec:DS}, we define a morphism of module categories
\[
    \bR_\DS \colon \DS(q,q^{m-2n}) \to \Ui\tmod,\qquad
    \togupdown \mapsto \varphi.
\]
We prove (\cref{fullthm}) that the functor $\bR_\DS$ is full.

There is another diagrammatic category that has appeared in the literature in connection to the representation theory of $\Ui$.  This category was based on the \emph{$q$-Brauer algebras} used in \cite{Mol03,Wen12} to study the endomorphism algebras $\End_{\Ui}((V^+)^{\otimes r})$.  These algebras were incorporated into a \emph{$q$-Brauer category} $\iqB(q,t)$ in \cite{ST19}, and there is a full functor (\cref{fullthm})
\[
    \bR_\iqB \colon \iqB(q,q^{m-2n}) \to \Ui\tmod.
\]
In this paper we will use the term \emph{iquantum Brauer} instead of \emph{$q$-Brauer}; see \cref{terminology}.  The category $\iqB(q,t)$ was defined in \cite[Def.~7.9]{ST19} as a module category over a monoidal category version of the tower of Iwahori--Hecke algebras of type $A$.  Given its connection to representation theory, it is natural to expect that $\iqB(q,t)$ is also a right module category over $\OS(q,t)$.  In fact, the search for this structure was the original motivation for the current paper.  We describe this module category structure explicitly in \cref{sec:iqBmodstruct}.

We show in \cref{equivthm} that the categories $\DS(q,t)$ and $\iqB(q,t)$ are equivalent as right module categories over $\OS(q,t)$.  This equivalence is compatible with the functors to the categories of $\Ui$-modules.  Setting $\DS = \DS(q,q^{m-2n})$, $\OS = \OS(q,q^{m-2n})$, and $\iqB = \iqB(q,q^{m-2n})$, our main results can be summarized in the following diagram:
\begin{equation} \label{apidura}
    \begin{tikzcd}
        \DS \times \OS \arrow[rrr,"\otimes"] \arrow[dd, shift right,"\bF \times \id"'] \arrow[dr,"\bR_\DS \times \bR_\OS"] & & & \DS \arrow[shift right,dd,"\bF"'] \arrow[dl,"\bR_\DS"']
        \\
        & \Ui\tmod \times \Ualg\tmod \arrow[r,"\otimes"] & \Ui\tmod &
        \\
        \iqB \times \OS \arrow[rrr,"\otimes"] \arrow[ur,"\bR_\iqB \times \bR_\OS"']  \arrow[shift right,uu,"\bG \times \id"'] & & & \iqB \arrow[ul,"\bR_\iqB"] \arrow[shift right,uu,"\bG"']
    \end{tikzcd}
\end{equation}
The horizontal arrows labelled $\otimes$ are the right module category structures, while the functors $\bF$ and $\bG$ give the equivalence of module categories between $\DS(q,t)$ and $\OS(q,t)$.  The diagram \cref{apidura} illustrates an important advantage of the disoriented skein category over the iquantum Brauer category: the top square in \cref{apidura} commutes, while the bottom square only commutes up to natural isomorphism.  This corresponds to the fact that $\bR_\DS$ is a \emph{strict} morphism of $\OS$-modules, whereas $\bR_\iqB$ is a morphism of $\OS$-modules that is \emph{not} strict.  The $\OS(q,t)$-module structure is also simpler for $\DS(q,t)$ than it is for $\iqB(q,t)$.  Yet another benefit of $\DS(q,t)$ is that the diagrams can contain cups and caps in arbitrary positions, whereas the cups and caps in $\iqB(q,t)$ can only appear on the left side of diagrams.  The resulting duality in $\DS(q,t)$ makes it much easier to prove a basis theorem describing bases for the morphism spaces (\cref{basis}).  Using the equivalences $\bF$ and $\bG$, one can then deduce a basis theorem for $\iqB(q,t)$ (\cref{iqBbasis}).

The categories $\DS(q,t)$ and $\iqB(q,t)$ should both be thought of as interpolating categories for the categories of $\Ui$-modules, similar to the interpolating categories introduced by Deligne \cite{Del07}.   The benefits of the disoriented skein category $\DS(q,t)$ over the iquantum Brauer category $\iqB(q,t)$ arise from the fact that the former category contains generating objects corresponding to the restriction to $\Ui$ of the natural $\Ualg$-module $V^+$ \emph{and} its dual $V^-$, whereas the latter category only contains a generating object corresponding to $V^+$.  Even though $V^+$ and $V^-$ are isomorphic as $\Ui$-modules, including them both in the diagrammatics allows greater flexibility and better compatibility with the structure of a module category over $\Ualg\tmod$, where $V^+$ and $V^-$ are \emph{not} isomorphic.

We conclude this introduction with some possible directions for future research.  The most obvious open question is to describe the kernel of the functors $\bR_\DS$ and $\bR_\iqB$.  Since these functors are full, a precise description of the kernel would give a complete presentation of the categories of tensor modules.  We hope to explore this in upcoming work.

We expect that the methods developed in the current paper can be used to develop diagrammatics for other quantum symmetric pairs.  For any quantum symmetric pair $(\Ualg,\Ui)$ for which there exists a good diagrammatic calculus for the representation theory of $\Ualg$, one should be able to develop a diagrammatics calculus for the representation theory of $\Ui$ in a matter analogous to the definition of $\DS(q,t)$ from $\OS(q,t)$.

Recently, the Brauer and Kauffman categories have been extended in \cite{MS24,MS25} to incorporate the spin representation.  However, the iquantum Brauer category only captures the behaviour of tensor modules for $\Ui$, which are all obtained by restriction from $\Ualg$-modules.  It would be interesting to enlarge $\DS(q,t)$ to a module category over $\OS(q,t)$ that incorporates a larger class of modules.  In particular, there should be an iquantum analogue of the quantum spin Brauer category of \cite{MS25}. There should also be affine and cyclotomic analogues of both the disoriented skein and iquantum Brauer categories, analogous to the affine and cyclotomic versions of the oriented Brauer, oriented skein, Brauer, and Kauffman categories studied in \cite{BCNR17,Bru17,RS19,GRS22,SaWe24}.

Another diagrammatic approach to representation theory involves the theory of \emph{webs}.  In \cite{ST19}, the authors introduce categories that should be viewed as web versions of the iquantum Brauer category.  One should also be able to define a web version of the disoriented skein category by taking a partial idempotent completion at idempotents corresponding to quantum symmetrizers and antisymmetrizers.  This disoriented web category should be a module category over the web categories of \cite{CKM14}.  We expect that the relationship between this disoriented web category and the web categories of \cite{ST19} would be analogous to the relationship between the disoriented skein category and the iquantum Brauer category.

\subsection*{Acknowledgments}

The research of A.S.\ and H.S.\ was supported by Discovery Grants RGPIN-2023-03842 and RGPIN-2024-04030 from the Natural Sciences and Engineering Research Council of Canada.  Y.S.\ was supported by these grants and the Fields Institute for Research in Mathematical Sciences.  The authors would like to thank Linliang Song for helpful conversations.

\subsection*{Changes relative to the published version}

This arXiv version of the paper includes the following changes relative to the published version, to correct mistakes noticed after publication:
\begin{itemize}
    \item In \cref{horizonOS,barOS,horizonDS,barDS,barB,lem:bar}, the word ``antilinear'' was changed to ``linear'', since the maps defined there are $\kk$-linear.  The definition of the term \emph{$\kk$-antilinear} was removed from the start of \cref{sec:diagrammcats} since it is no longer used.
        
    \item Just above \cref{deer}, linearity in the second argument was added as a relation.
    
    \item In \cref{KEF}, $F_h$ was changed to $K_h$.
\end{itemize}

\section{Diagrammatic categories\label{sec:diagrammcats}}

In this section we introduce the diagrammatic categories of interest to us.  Throughout this section $\kk$ is an arbitrary commutative ring and $q,t$ are elements of $\kk^\times$ such that $t-t^{-1}$ is divisible by $q-q^{-1}$.

\subsection{The oriented skein category\label{subsec:OS}}

The \emph{framed HOMFLYPT skein category}, or \emph{oriented skein category} for short, $\OS(q,t)$, is the $\kk$-linear strict monoidal category generated by objects $\upobj$, $\downobj$ and morphisms
\[
    \posupcross,\ \negupcross \colon \upobj \otimes \upobj \to \upobj \otimes \upobj,\quad
    \rightcup \colon \one \to \downobj \otimes \upobj,\quad
    \rightcap \colon \upobj \otimes \downobj \to \one,\quad
    \leftcup \colon \one \to \upobj \otimes \downobj,\quad
    \leftcap \colon \downobj \otimes \upobj \to \one,
\]
subject to the relations
\begin{gather} \label{obraid}
    \begin{tikzpicture}[centerzero]
        \draw[->] (0.2,-0.4) to[out=135,in=down] (-0.15,0) to[out=up,in=225] (0.2,0.4);
        \draw[wipe] (-0.2,-0.4) to[out=45,in=down] (0.15,0) to[out=up,in=-45] (-0.2,0.4);
        \draw[->] (-0.2,-0.4) to[out=45,in=down] (0.15,0) to[out=up,in=-45] (-0.2,0.4);
    \end{tikzpicture}
    \ =\
    \begin{tikzpicture}[centerzero]
        \draw[->] (-0.2,-0.4) -- (-0.2,0.4);
        \draw[->] (0.2,-0.4) -- (0.2,0.4);
    \end{tikzpicture}
    \ =\
    \begin{tikzpicture}[centerzero]
        \draw[->] (-0.2,-0.4) to[out=45,in=down] (0.15,0) to[out=up,in=-45] (-0.2,0.4);
        \draw[wipe] (0.2,-0.4) to[out=135,in=down] (-0.15,0) to[out=up,in=225] (0.2,0.4);
        \draw[->] (0.2,-0.4) to[out=135,in=down] (-0.15,0) to[out=up,in=225] (0.2,0.4);
    \end{tikzpicture}
    \ ,\quad
    \begin{tikzpicture}[centerzero]
        \draw[->] (0.4,-0.4) -- (-0.4,0.4);
        \draw[wipe] (0,-0.4) to[out=135,in=down] (-0.32,0) to[out=up,in=225] (0,0.4);
        \draw[->] (0,-0.4) to[out=135,in=down] (-0.32,0) to[out=up,in=225] (0,0.4);
        \draw[wipe] (-0.4,-0.4) -- (0.4,0.4);
        \draw[->] (-0.4,-0.4) -- (0.4,0.4);
    \end{tikzpicture}
    \ =\
    \begin{tikzpicture}[centerzero]
        \draw[->] (0.4,-0.4) -- (-0.4,0.4);
        \draw[wipe] (0,-0.4) to[out=45,in=down] (0.32,0) to[out=up,in=-45] (0,0.4);
        \draw[->] (0,-0.4) to[out=45,in=down] (0.32,0) to[out=up,in=-45] (0,0.4);
        \draw[wipe] (-0.4,-0.4) -- (0.4,0.4);
        \draw[->] (-0.4,-0.4) -- (0.4,0.4);
    \end{tikzpicture}
    \ ,\quad
    \begin{tikzpicture}[centerzero]
        \draw[->] (-0.2,-0.4) to[out=45,in=down] (0.15,0) to[out=up,in=-45] (-0.2,0.4);
        \draw[wipe] (0.2,-0.4) to[out=135,in=down] (-0.15,0) to[out=up,in=225] (0.2,0.4);
        \draw[<-] (0.2,-0.4) to[out=135,in=down] (-0.15,0) to[out=up,in=225] (0.2,0.4);
    \end{tikzpicture}
    \ =\
    \begin{tikzpicture}[centerzero]
        \draw[->] (-0.2,-0.4) -- (-0.2,0.4);
        \draw[<-] (0.2,-0.4) -- (0.2,0.4);
    \end{tikzpicture}
    \ ,\quad
    \begin{tikzpicture}[centerzero]
        \draw[->] (0.2,-0.4) to[out=135,in=down] (-0.15,0) to[out=up,in=225] (0.2,0.4);
     \draw[wipe] (-0.2,-0.4) to[out=45,in=down] (0.15,0) to[out=up,in=-45] (-0.2,0.4);
        \draw[<-] (-0.2,-0.4) to[out=45,in=down] (0.15,0) to[out=up,in=-45] (-0.2,0.4);
    \end{tikzpicture}
    \ =\
    \begin{tikzpicture}[centerzero]
        \draw[<-] (-0.2,-0.4) -- (-0.2,0.4);
        \draw[->] (0.2,-0.4) -- (0.2,0.4);
    \end{tikzpicture}
    \ ,\quad
    \\ \label{oskein}
    \posupcross - \negupcross = (q-q^{-1})\
    \begin{tikzpicture}[centerzero]
        \draw[->] (-0.15,-0.2) -- (-0.15,0.2);
        \draw[->] (0.15,-0.2) -- (0.15,0.2);
    \end{tikzpicture}
    \ ,
    \\ \label{ocurlbub}
    \begin{tikzpicture}[centerzero,xscale=-1]
        \draw (0,-0.4) to[out=up,in=180] (0.25,0.15) to[out=0,in=up] (0.4,0);
        \draw[wipe] (0.25,-0.15) to[out=180,in=down] (0,0.4);
        \draw[->] (0.4,0) to[out=down,in=0] (0.25,-0.15) to[out=180,in=down] (0,0.4);
    \end{tikzpicture}
    = t\
    \begin{tikzpicture}[centerzero]
        \draw[->] (0,-0.4) -- (0,0.4);
    \end{tikzpicture}
    =
    \begin{tikzpicture}[centerzero]
        \draw[->] (0.4,0) to[out=down,in=0] (0.25,-0.15) to[out=180,in=down] (0,0.4);
        \draw[wipe] (0,-0.4) to[out=up,in=180] (0.25,0.15);
        \draw (0,-0.4) to[out=up,in=180] (0.25,0.15) to[out=0,in=up] (0.4,0);
    \end{tikzpicture}
    \ ,\qquad
    \rightbub = \frac{t-t^{-1}}{q-q^{-1}} 1_\one,
    \\ \label{adjunction}
    \begin{tikzpicture}[centerzero]
        \draw[->] (-0.3,-0.4) -- (-0.3,0) arc(180:0:0.15) arc(180:360:0.15) -- (0.3,0.4);
    \end{tikzpicture}
    =
    \begin{tikzpicture}[centerzero]
        \draw[->] (0,-0.4) -- (0,0.4);
    \end{tikzpicture}
    ,\qquad
    \begin{tikzpicture}[centerzero]
        \draw[<-] (0,-0.4) -- (0,0.4);
    \end{tikzpicture}
    =
    \begin{tikzpicture}[centerzero]
        \draw[->] (-0.3,0.4) -- (-0.3,0) arc(180:360:0.15) arc(180:0:0.15) -- (0.3,-0.4);
    \end{tikzpicture}.
\end{gather}
In \cref{obraid}, we have used the following morphisms:
\begin{equation} \label{lego}
    \posrightcross :=
    \begin{tikzpicture}[centerzero,xscale=-1]
        \draw[->] (-0.2,-0.3) \braidup (0.2,0.3);
        \draw[wipe] (-0.2,0.2)  \braidup (0.2,-0.2);
        \draw[->] (0.4,0.3) -- (0.4,0.1) to[out=down,in=right] (0.2,-0.2) to[out=left,in=right] (-0.2,0.2) to[out=left,in=up] (-0.4,-0.1) -- (-0.4,-0.3);
    \end{tikzpicture}
    \ ,\qquad
    \negleftcross :=
    \begin{tikzpicture}[centerzero,xscale=-1]
        \draw[->] (0.2,-0.3) \braidup (-0.2,0.3);
        \draw[wipe] (-0.2,-0.2) to[out=right,in=left] (0.2,0.2);
        \draw[->] (-0.4,0.3) -- (-0.4,0.1) to[out=down,in=left] (-0.2,-0.2) to[out=right,in=left] (0.2,0.2) to[out=right,in=up] (0.4,-0.1) -- (0.4,-0.3);
    \end{tikzpicture}
    \ .
\end{equation}

This category was first introduced in \cite[\S5.2]{Tur89}, where it was called the \emph{Hecke category}. In \cite[Definition 2.1]{QS19} it is called the \emph{quantized oriented Brauer category}.  It was studied in depth in \cite{Bru17}.  Note that, in \cite{Bru17}, the category is denoted $\OS(z,t)$, with the denominator $q-q^{-1}$ in \cref{ocurlbub} replaced by $z$.  Since we will only be interested in the case $z=q-q^{-1}$, we make this choice at the start and use the simpler notation $\OS(q,t)$ instead of $\OS(q-q^{-1},t)$.

We define the following morphisms:
\begin{equation} \label{windmill}
    \negrightcross:=
    \begin{tikzpicture}[centerzero]
        \draw[->] (-0.4,0.3) -- (-0.4,0.1) to[out=down,in=left] (-0.2,-0.2) to[out=right,in=left] (0.2,0.2) to[out=right,in=up] (0.4,-0.1) -- (0.4,-0.3);
        \draw[wipe] (0.2,-0.3) \braidup (-0.2,0.3);
        \draw[->] (0.2,-0.3) \braidup (-0.2,0.3);
    \end{tikzpicture}
    \ ,\qquad
    \posleftcross :=
    \begin{tikzpicture}[centerzero,xscale=-1]
        \draw[->] (-0.4,0.3) -- (-0.4,0.1) to[out=down,in=left] (-0.2,-0.2) to[out=right,in=left] (0.2,0.2) to[out=right,in=up] (0.4,-0.1) -- (0.4,-0.3);
        \draw[wipe] (0.2,-0.3) \braidup (-0.2,0.3);
        \draw[->] (0.2,-0.3) \braidup (-0.2,0.3);
    \end{tikzpicture}
    \ ,\qquad
    \posdowncross:=\begin{tikzpicture}[centerzero]
        \draw[->] (-0.4,0.3) -- (-0.4,0.1) to[out=down,in=left] (-0.2,-0.2) to[out=right,in=left] (0.2,0.2) to[out=right,in=up] (0.4,-0.1) -- (0.4,-0.3);
        \draw[wipe] (0.2,-0.3) \braidup (-0.2,0.3);
        \draw[<-] (0.2,-0.3) \braidup (-0.2,0.3);
    \end{tikzpicture}
    \ ,\qquad
    \negdowncross:=\begin{tikzpicture}[centerzero]
        \draw[<-] (0.2,-0.3) \braidup (-0.2,0.3);
        \draw[wipe] (-0.4,0.3) -- (-0.4,0.1) to[out=down,in=left] (-0.2,-0.2) to[out=right,in=left] (0.2,0.2) to[out=right,in=up] (0.4,-0.1) -- (0.4,-0.3);
        \draw[->] (-0.4,0.3) -- (-0.4,0.1) to[out=down,in=left] (-0.2,-0.2) to[out=right,in=left] (0.2,0.2) to[out=right,in=up] (0.4,-0.1) -- (0.4,-0.3);
    \end{tikzpicture}
    \ .
\end{equation}
The category $\OS(q,t)$ is strict pivotal.  Furthermore, the following relations hold for any orientation of the strands:
\begin{equation} \label{venom1}
    \begin{tikzpicture}[centerzero]
        \draw (-0.2,0.3) -- (-0.2,0.1) arc(180:360:0.2) -- (0.2,0.3);
        \draw[wipe] (-0.3,-0.3) \braidup (0,0.3);
        \draw (-0.3,-0.3) \braidup (0,0.3);
    \end{tikzpicture}
    =
    \begin{tikzpicture}[centerzero]
        \draw (-0.2,0.3) -- (-0.2,0.1) arc(180:360:0.2) -- (0.2,0.3);
        \draw[wipe] (0.3,-0.3) \braidup (0,0.3);
        \draw (0.3,-0.3) \braidup (0,0.3);
    \end{tikzpicture}
    \ ,\quad
    \begin{tikzpicture}[centerzero]
        \draw (-0.2,-0.3) -- (-0.2,-0.1) arc(180:0:0.2) -- (0.2,-0.3);
        \draw[wipe] (-0.3,0.3) \braiddown (0,-0.3);
        \draw (-0.3,0.3) \braiddown (0,-0.3);
    \end{tikzpicture}
    =
    \begin{tikzpicture}[centerzero]
        \draw (-0.2,-0.3) -- (-0.2,-0.1) arc(180:0:0.2) -- (0.2,-0.3);
        \draw[wipe] (0.3,0.3) \braiddown (0,-0.3);
        \draw (0.3,0.3) \braiddown (0,-0.3);
    \end{tikzpicture}
    \ ,\quad
    \begin{tikzpicture}[centerzero]
        \draw (-0.3,-0.3) \braidup (0,0.3);
        \draw[wipe] (-0.2,0.3) -- (-0.2,0.1) arc(180:360:0.2) -- (0.2,0.3);
        \draw (-0.2,0.3) -- (-0.2,0.1) arc(180:360:0.2) -- (0.2,0.3);
    \end{tikzpicture}
    =
    \begin{tikzpicture}[centerzero]
        \draw (0.3,-0.3) \braidup (0,0.3);
        \draw[wipe] (-0.2,0.3) -- (-0.2,0.1) arc(180:360:0.2) -- (0.2,0.3);
        \draw (-0.2,0.3) -- (-0.2,0.1) arc(180:360:0.2) -- (0.2,0.3);
    \end{tikzpicture}
    \ ,\quad
    \begin{tikzpicture}[centerzero]
        \draw (-0.3,0.3) \braiddown (0,-0.3);
        \draw[wipe] (-0.2,-0.3) -- (-0.2,-0.1) arc(180:0:0.2) -- (0.2,-0.3);
        \draw (-0.2,-0.3) -- (-0.2,-0.1) arc(180:0:0.2) -- (0.2,-0.3);
    \end{tikzpicture}
    =
    \begin{tikzpicture}[centerzero]
        \draw (0.3,0.3) \braiddown (0,-0.3);
        \draw[wipe] (-0.2,-0.3) -- (-0.2,-0.1) arc(180:0:0.2) -- (0.2,-0.3);
        \draw (-0.2,-0.3) -- (-0.2,-0.1) arc(180:0:0.2) -- (0.2,-0.3);
    \end{tikzpicture}
    \ ,\quad
    \begin{tikzpicture}[centerzero]
        \draw (-0.3,-0.4) -- (-0.3,0) arc(180:0:0.15) arc(180:360:0.15) -- (0.3,0.4);
    \end{tikzpicture}
    =
    \begin{tikzpicture}[centerzero]
        \draw (0,-0.4) -- (0,0.4);
    \end{tikzpicture}
    =
    \begin{tikzpicture}[centerzero]
        \draw (-0.3,0.4) -- (-0.3,0) arc(180:360:0.15) arc(180:0:0.15) -- (0.3,-0.4);
    \end{tikzpicture}
    \ ,\end{equation}
    \begin{equation} \label{venom2}
    \begin{tikzpicture}[centerzero]
        \draw (0.2,-0.4) to[out=135,in=down] (-0.15,0) to[out=up,in=225] (0.2,0.4);
        \draw[wipe] (-0.2,-0.4) to[out=45,in=down] (0.15,0) to[out=up,in=-45] (-0.2,0.4);
        \draw (-0.2,-0.4) to[out=45,in=down] (0.15,0) to[out=up,in=-45] (-0.2,0.4);
    \end{tikzpicture}
    \ =\
    \begin{tikzpicture}[centerzero]
        \draw (-0.2,-0.4) -- (-0.2,0.4);
        \draw (0.2,-0.4) -- (0.2,0.4);
    \end{tikzpicture}
    \ =\
    \begin{tikzpicture}[centerzero]
        \draw (-0.2,-0.4) to[out=45,in=down] (0.15,0) to[out=up,in=-45] (-0.2,0.4);
        \draw[wipe] (0.2,-0.4) to[out=135,in=down] (-0.15,0) to[out=up,in=225] (0.2,0.4);
        \draw (0.2,-0.4) to[out=135,in=down] (-0.15,0) to[out=up,in=225] (0.2,0.4);
    \end{tikzpicture}
    \ ,\quad
    \begin{tikzpicture}[centerzero]
        \draw (0.4,-0.4) -- (-0.4,0.4);
        \draw[wipe] (0,-0.4) to[out=135,in=down] (-0.32,0) to[out=up,in=225] (0,0.4);
        \draw (0,-0.4) to[out=135,in=down] (-0.32,0) to[out=up,in=225] (0,0.4);
        \draw[wipe] (-0.4,-0.4) -- (0.4,0.4);
        \draw (-0.4,-0.4) -- (0.4,0.4);
    \end{tikzpicture}
    \ =\
    \begin{tikzpicture}[centerzero]
        \draw (0.4,-0.4) -- (-0.4,0.4);
        \draw[wipe] (0,-0.4) to[out=45,in=down] (0.32,0) to[out=up,in=-45] (0,0.4);
        \draw (0,-0.4) to[out=45,in=down] (0.32,0) to[out=up,in=-45] (0,0.4);
        \draw[wipe] (-0.4,-0.4) -- (0.4,0.4);
        \draw (-0.4,-0.4) -- (0.4,0.4);
    \end{tikzpicture}
    \ ,\quad
    \Bbub=\frac{t-t^{-1}}{q-q^{-1}}.
\end{equation}
We also have
\begin{equation} \label{ocurlinv}
    \begin{tikzpicture}[centerzero,xscale=-1]
        \draw[->] (0.4,0) to[out=down,in=0] (0.25,-0.15) to[out=180,in=down] (0,0.4);
        \draw[wipe] (0,-0.4) to[out=up,in=180] (0.25,0.15) to[out=0,in=up] (0.4,0);
        \draw (0,-0.4) to[out=up,in=180] (0.25,0.15) to[out=0,in=up] (0.4,0);
    \end{tikzpicture}
    = t^{-1}\
    \begin{tikzpicture}[centerzero]
        \draw[->] (0,-0.4) -- (0,0.4);
    \end{tikzpicture}
    =
    \begin{tikzpicture}[centerzero]
        \draw (0,-0.4) to[out=up,in=180] (0.25,0.15) to[out=0,in=up] (0.4,0);
        \draw[wipe] (0.4,0) to[out=down,in=0] (0.25,-0.15) to[out=180,in=down] (0,0.4);
        \draw[->] (0.4,0) to[out=down,in=0] (0.25,-0.15) to[out=180,in=down] (0,0.4);
    \end{tikzpicture}
    \ .
\end{equation}

It is straightforward to verify that there is a $\kk$-linear isomorphism of monoidal categories
\begin{equation} \label{horizonOS}
    \Omega_{\updownarrow} \colon \OS(q,t) \to \OS(q^{-1},t^{-1})^\op
\end{equation}
acting on objects as $\upobj \mapsto \downobj$, $\downobj \mapsto \upobj$, and sending
\begin{equation}
    \posupcross \mapsto \negdowncross\, ,\quad
    \negupcross \mapsto \posdowncross\, ,\quad
    \rightcup \mapsto \rightcap\, ,\quad
    \rightcap \mapsto \rightcup\, ,\quad
    \leftcup \mapsto \leftcap\, ,\quad
    \leftcap \mapsto \leftcup\, .
\end{equation}
Intuitively, $\Omega_{\updownarrow}$ reflects diagrams in the horizontal axis and inverts $q$ and $t$.  We also have an isomorphism of $\kk$-linear monoidal categories
\begin{equation} \label{reverseOS}
    \Theta \colon \OS(q,t) \to \OS(q,t)
\end{equation}
that reverses the orientation of all strands. Finally, we have a $\kk$-linear isomorphism of monoidal categories
\begin{equation} \label{barOS}
    \Xi \colon \OS(q,t) \to \OS(q^{-1},t^{-1}),
\end{equation}
which we call the \emph{bar involution}, acting as the identity on objects and given on morphisms by
\[
    \posupcross \mapsto \negupcross\, ,\quad
    \negupcross \mapsto \posupcross\, ,\quad
    \rightcup \mapsto \rightcup\, ,\quad
    \rightcap \mapsto \rightcap\, ,\quad
    \leftcup \mapsto \leftcup\, ,\quad
    \leftcap \mapsto \leftcap\, .
\]
Intuitively, the bar involution flips crossings in diagrams and inverts $q$ and $t$.

Let $\word$ denote the set of objects for $\OS(q,t)$. Equivalently, writing the tensor product as juxtaposition, $\word$ is the set of words generated by $\upobj$ and $\downobj$.  It will be convenient to introduce thick strands:
\[
    \begin{tikzpicture}[centerzero]
        \draw[multi] (0,-0.3) \botlabel{\lambda} -- (0,0.3);
    \end{tikzpicture}
    := 1_\lambda, \qquad \lambda \in \word.
\]
The fact that $\OS(q,t)$ is a braided monoidal category gives us a natural interpretation for crossings of such strands.  For instance, if $\lambda = \downobj \upobj$ and $\mu = \upobj \upobj \downobj$, then
\[
    \begin{tikzpicture}[centerzero]
        \draw[multi] (0.4,-0.4) \botlabel{\mu} -- (0,0.4);
        \draw[wipe] (0,-0.4) -- (0.4,0.4);
        \draw[multi] (0,-0.4) \botlabel{\lambda} -- (0.4,0.4);
    \end{tikzpicture}
    =
    \begin{tikzpicture}[centerzero]
        \draw[->] (0.8,-0.4) -- (0,0.4);
        \draw[->] (1.2,-0.4) -- (0.4,0.4);
        \draw[<-] (1.6,-0.4) -- (0.8,0.4);
        \draw[wipe] (0,-0.4) -- (1.2,0.4);
        \draw[<-] (0,-0.4) -- (1.2,0.4);
        \draw[wipe] (0.4,-0.4) -- (1.6,0.4);
        \draw[->] (0.4,-0.4) -- (1.6,0.4);
    \end{tikzpicture}
    \ .
\]

\subsection{The disoriented skein category\label{subsec:DS}}

Let $(\cC,\otimes, \one)$ be a $\kk$-linear strict monoidal category, and let $\cM$ be a $\kk$-linear category.  Let $\cEnd(\cM)$ denote the strict monoidal category of $\kk$-linear endofunctors of $\cM$.  A \emph{right action} of $\cC$ on $\cM$ is a monoidal functor $\bA \colon \cC \to \cEnd(\cM)^\rev$, where $\cD^\rev$ denotes the \emph{reverse} of a monoidal category $\cD$, where we reverse the tensor product.  Given such a right action, we also say that $\cM$ is a \emph{right module category} over $\cC$, or \emph{$\cC$-module} for short; see \cite[\S 7.1]{ENGO15}.  For objects $C \in \cC$ and $M \in \cM$, we set
\begin{equation} \label{beaver}
    M \otimes C := \bA(C)(M).
\end{equation}
We say that the $\cC$-module $\cM$ is \emph{strict} if $\bA$ is a strict monoidal functor.  Equivalently, $\cM$ is strict if
\[
    M \otimes (C_1 \otimes C_2) = (M \otimes C_1) \otimes C_2,\qquad
    \text{for all } M \in \cM,\ C_1,C_2 \in \cC,
\]
and similarly for morphisms.

We now describe the notion of a \emph{presentation} of $\cC$-modules by generators and relations.  We restrict ourselves to the case where there are only generating morphisms (i.e., no nontrivial generating objects).  Let $\mathtt{M}$ be the set of generating morphisms.  These are formal morphisms whose domains and codomains are objects of $\cC$.  Let $\cM$ be the $\kk$-linear category whose objects are the objects of $\cC$ and whose morphisms are generated (as a $\kk$-linear category) by morphisms of $\cC$ and morphisms
\[
    f \otimes g \colon \dom(f) \otimes \dom(g) \to \codom(f) \otimes \codom(g),\qquad
    f \in \mathtt{M},\ g \in \Mor(\cC),
\]
modulo the relations of linearity in the second argument ($g$ above) and
\begin{equation} \label{deer}
    (1_Y \otimes g) \circ (f \otimes h)
    = f \otimes (g \circ h)
    = (f \otimes g) \circ (1_X \otimes h),
\end{equation}
for all $f \colon X \to Y$ in $\mathtt{M}$ and all composable morphisms $g,h$ in $\cC$.  The category $\cM$ has a natural structure of a strict $\cC$-module.  Now, let $\mathtt{R}$ be a set of relations on the morphisms in $\cM$.  Define $\hat{\mathtt{R}}$ to be the smallest subset of morphisms of $\cM$ containing $\mathtt{R}$ and closed under composition, acting on the right by morphisms in $\cC$, and taking $\kk$-linear combinations.  Then we define the \emph{$\cC$-module generated by the morphisms $\mathtt{M}$ subject to the relations $\mathtt{R}$} to be the quotient of $\cM$ by $\hat{\mathtt{R}}$.  This has the structure of a strict $\cC$-module induced from the $\cC$-module structure on $\cM$.

Using the usual string diagram notation for module categories, the relations \cref{deer} become
\begin{equation} \label{snow}

        \ ,
    \]
    we cannot apply the last relation in \cref{DScurls} to the curl with a toggle directly in this situation.
\end{rem}

We now describe three natural symmetries of the disoriented skein category.  First, it follows from \cref{headlight} that the isomorphism \cref{horizonOS} induces a $\kk$-linear isomorphism of categories
\begin{equation} \label{horizonDS}
    \Omega_\updownarrow \colon \DS(q,t) \to \DS(q^{-1},t^{-1})^\op,\qquad
    \togupdown \mapsto \togupdown,\quad
    \togdownup \mapsto \togdownup,
\end{equation}
such that the diagram
\[
    \begin{tikzcd}
        \DS(q,t) \times \OS(q,t)
        \arrow[r,"\otimes"]
        \arrow[d,"\Omega_\updownarrow \times \Omega_\updownarrow"]
        &
        \DS(q,t)
        \arrow[d,"\Omega_\updownarrow"]
        \\
        \DS(q^{-1},t^{-1})^\op \times \OS(q^{-1},t^{-1})^\op
        \arrow[r,"\otimes"]
        &
        \DS(q^{-1},t^{-1})^\op
    \end{tikzcd}
\]
commutes.

Second, \cref{reverseOS} induces an isomorphism of $\kk$-linear categories
\begin{equation}
    \Theta \colon \DS(q,t) \to \DS(q,t),\qquad
    \togupdown \mapsto \togdownup,\quad
    \togdownup \mapsto \togupdown,
\end{equation}
such that the following diagram commutes:
\[
    \begin{tikzcd}
        \DS(q,t) \times \OS(q,t)
        \arrow[r,"\otimes"]
        \arrow[d,"\Theta \times \Theta"]
        &
        \DS(q,t)
        \arrow[d,"\Theta"]
        \\
        \DS(q,t) \times \OS(q,t)
        \arrow[r,"\otimes"]
        &
        \DS(q,t)
    \end{tikzcd}
\]
Indeed, $\Theta$ respects the relations \cref{DScurls} by \cref{DScurls2}, and it respects the last relation in \cref{DStoggles} by \cref{toggy} and the $f = \togdownup$ case of the last relation in \cref{intertog}.

Finally, it follows from \cref{headlight} that the bar involution \cref{barOS} induces a $\kk$-linear isomorphism of categories
\begin{equation} \label{barDS}
    \Xi \colon \DS(q,t) \to \DS(q^{-1},t^{-1}),\qquad
    \togupdown \mapsto \togupdown,\quad
    \togdownup \mapsto \togdownup,
\end{equation}
such that the following diagram commutes:
\[
    \begin{tikzcd}
        \DS(q,t) \times \OS(q,t)
        \arrow[r,"\otimes"]
        \arrow[d,"\Xi \times \Xi"]
        &
        \DS(q,t)
        \arrow[d,"\Xi"]
        \\
        \DS(q^{-1},t^{-1}) \times \OS(q^{-1},t^{-1})
        \arrow[r,"\otimes"]
        &
        \DS(q^{-1},t^{-1})
    \end{tikzcd}
\]

Note that, $\Omega_\updownarrow$, $\Theta$, and $\Xi$ do not behave well with respect to toggles on strands that are not on the left, due to the crossings in \cref{toggy}.  However, the composites $\Theta \circ \Xi$, and $\Omega_\updownarrow \circ \Theta$ \emph{do} behave well: they act as $\togupdown \mapsto \togdownup$, $\togdownup \mapsto \togupdown$ on toggles in arbitrary position.  Similarly, $\Omega_\updownarrow \circ \Xi$ acts as $\togupdown \mapsto \togupdown$, $\togdownup \mapsto \togdownup$ on toggles in arbitrary position.

\begin{rem}
    The last relation in \cref{DStoggles} is a twisted analogue of the reflection equation appearing in the definition of braided module categories; see \cref{reflection}.  In the corresponding categories of modules over quantum enveloping algebras and their coideal subalgebras, this twist is handled in \cite{Kol20} by passing to an equivariantization of the category of modules.
\end{rem}

\subsection{The iquantum Brauer category\label{subsec:iqB}}

We now recall the definition of the iquantum Brauer category.  This category was defined in \cite[Def.~7.9]{ST19}, where it was called the \emph{quantum} or \emph{$q$-Brauer category}.  Its endomorphism algebras first appeared in \cite[Def.~2.1]{Mol03}.  The definition in \cite[Def.~7.9]{ST19} is as a module category over the subcategory of $\OS(q,t)$ consisting of only upward-oriented strands.  We start with a presentation as a $\kk$-linear category.  Then, in \cref{sec:iqBmodstruct}, we endow the iquantum Brauer category with the structure of an $\OS(q,t)$-module category, extending the module category structure from \cite[Def.~7.9]{ST19}.  Below and throughout the paper, we let $\N$ denote the set of nonnegative integers.

\begin{defin} \label{iqBdef}
    The \emph{iquantum Brauer category} $\iqB(q,t) = \iqB_\kk(q,t)$ is the $\kk$-linear category with objects $\Bobj_r$, $r \in \N$, and generating morphisms
    \begin{gather} \label{Bcupcap}
        \begin{tikzpicture}[centerzero]
            \draw (-0.15,0.2) -- (-0.15,0.1) arc(180:360:0.15) -- (0.15,0.2);
            \draw[multi] (0.45,-0.2) \botlabel{r} -- (0.45,0.2);
        \end{tikzpicture}
        \colon \Bobj_r \to \Bobj_{r+2},
        \qquad
        \begin{tikzpicture}[centerzero]
            \draw (-0.15,-0.2) -- (-0.15,-0.1) arc(180:0:0.15) -- (0.15,-0.2);
            \draw[multi] (0.45,-0.2) \botlabel{r} -- (0.45,0.2);
        \end{tikzpicture}
        \colon \Bobj_{r+2} \to \Bobj_r,\qquad
        r \in \N,
        \\ \label{Bcross}
        \thickstrand{r} \poscross\ \thickstrand{s} \colon \Bobj_{r+s+2} \to \Bobj_{r+s+2},
        \qquad
        \thickstrand{r} \negcross\ \thickstrand{s} \colon \Bobj_{r+s+2} \to \Bobj_{r+s+2},\qquad
        r,s \in \N,
    \end{gather}
    subject to relations that we describe below.  We denote the identity morphism of $\Bobj_r$ by the thick strand $\thickstrand{r}$.  We define juxtaposition of thick strands by
    \begin{equation} \label{vacuum}
        \thickstrand{r_1} \thickstrand{r_2} \cdots \thickstrand{r_r} := \thickstrand{r_1+r_2+\dotsb+r_k},\qquad r_1,r_2,\dotsc,r_k \in \N.
    \end{equation}
    We use \cref{vacuum} even when these strands are part of a larger diagram that involves cups, caps, or crossings.

    We impose the following relations on morphisms, for all $r,s \in \N$:
    \begin{gather} \label{Bbraid}

        \ ,\quad
        \thickstrand{r} \poscross\ \thickstrand{s} \mapsto \thickstrand{r} \negcross\ \thickstrand{s}
        \ ,\quad
        \thickstrand{r} \negcross\ \thickstrand{s} \mapsto \thickstrand{r} \poscross\ \thickstrand{s}\ .
    \]
\end{lem}

Intuitively, $\Omega_\updownarrow$ reflects diagrams in a horizontal axis and inverts $q$ and $t$.

\begin{proof}
    Using \cref{Bcurl2,eggnog,camel}, it is straightforward to verify that $\Omega_\updownarrow$ preserves the defining relations of $\iqB(q,t)$.  Since $\Omega_\updownarrow$ squares to the identity, it is an isomorphism.
\end{proof}

The following proposition implies that the relations \cref{Bhumps} continue to hold after all crossings are flipped.

\begin{prop}
    In $\iqB(q,t)$,
    \begin{equation} \label{Bhumps2}

        , \quad
        \thickstrand{r} \poscross\ \thickstrand{s} \mapsto \thickstrand{r} \negcross\ \thickstrand{s}
        , \quad
        \thickstrand{r} \negcross\ \thickstrand{s} \mapsto \thickstrand{r} \poscross\ \thickstrand{s}
        .
    \]
\end{lem}

\begin{proof}
    Using \cref{Bcurl2,eggnog,Bhumps2}, it is straightforward to verify that the given map preserves the defining relations of $\iqB(q,t)$.  Since it squares to the identity, it is an isomorphism.
\end{proof}

\begin{rem}
    In \cite[Lem.~3.2]{CS24}, a bar involution was constructed for the iquantum Brauer algebras, which are isomorphic to endomorphism algebras of $\iqB(q,t)$; see \cref{river}. The bar involution in \cref{lem:bar} is a generalization of this involution from the algebra level to the category level.
\end{rem}

\begin{rem}
    For the reader familiar with the string diagram calculus for monoidal categories, we want to emphasize that the iquantum Brauer category is \emph{not} monoidal.  In particular, we may not, in general, use horizontal juxtaposition, which is the tensor product for monoidal categories.  This is why we have been careful in \cref{vacuum,Bcommute} to define some \emph{particular} horizontal juxtapositions; otherwise these would not be defined.
\end{rem}

\begin{rem}
    Taking $\kk = \Z[q,q^{-1}]$ and $t=q^m$ for some $m \in \Z$, the coefficient $(t-t^{-1})/(q-q^{-1})$ appearing in \cref{Bcurl} becomes the quantum integer $[m]$; see \cref{qint}.  Then, specializing $q=1$, the iquantum Brauer category becomes isomorphic to the usual Brauer category; see \cref{Guilin}.   Note, however, that the usual Brauer category is monoidal.
\end{rem}

\section{Module category structure on the iquantum Brauer category\label{sec:iqBmodstruct}}

The goal of the current section is to endow $\iqB(q,t)$ with the structure of a strict right module category over $\OS(q,t)$.  As noted in \cref{subsec:DS}, this amounts to defining a strict monoidal functor
\[
    \bA \colon \OS(q,t) \to \cEnd(\iqB(q,t))^\rev.
\]

We begin by defining functors that describe the action of the objects of $\OS(q,t)$.  Recall the convention \cref{vacuum} for juxtaposition of strands in $\iqB(q,t)$.  We define the following thick crossings:
\[
    \begin{tikzpicture}[centerzero]
        \draw[multi] (0,0.3) -- (0.6,-0.3) \botlabel{s};
        \draw[wipe] (0,-0.3) to (0.6,0.3);
        \draw[multi] (0,-0.3) \botlabel{r} -- (0.6,0.3);
    \end{tikzpicture}
    :=
    \begin{tikzpicture}[baseline=4mm]
        \draw (-.8,1) to (.2,0);
        \draw (-.2,1) to (.8,0);
        \draw[wipe] (-.8,0) to (.2,1);
        \draw[wipe] (-.2,0) to (.8,1);
        \draw (-.8,0) to (.2,1);
        \draw (-.2,0) to (.8,1);
        \node at (-.45,0){$\cdots$};
        \node at (-.45,1){$\cdots$};
        \node at (.45,0){$\cdots$};
        \node at (.45,1){$\cdots$};
        \node at (-.45,-.3){$\underbrace{\hspace{.02in}}_{r}$};
        \node at (.45,-.3){$\underbrace{\hspace{.02in}}_{s}$};
    \end{tikzpicture},
    \qquad
    \begin{tikzpicture}[centerzero]
        \draw[multi] (0,-0.3) \botlabel{r} to (0.6,0.3);
        \draw[wipe] (0,0.3) to (0.6,-0.3);
        \draw[multi] (0,.3) to (0.6,-0.3) \botlabel{s};
    \end{tikzpicture}
    :=
    \begin{tikzpicture}[baseline=4mm]
        \draw (-.8,0) to (.2,1);
        \draw (-.2,0) to (.8,1);
        \draw[wipe] (-.8,1) to (.2,0);
        \draw[wipe] (-.2,1) to (.8,0);
        \draw (-.8,1) to (.2,0);
        \draw (-.2,1) to (.8,0);
        \node at (-.45,0){$\cdots$};
        \node at (-.45,1){$\cdots$};
        \node at (.45,0){$\cdots$};
        \node at (.45,1){$\cdots$};
        \node at (-.45,-.3){$\underbrace{\hspace{.02in}}_{r}$};
        \node at (.45,-.3){$\underbrace{\hspace{.02in}}_{s}$};
    \end{tikzpicture}
    ,\qquad r,s \in \N.
\]
We then define $\kk$-linear endofunctors $\bA_k$, $k \in \N$, of $\iqB(q,t)$ given on objects by
\begin{equation}
    \bA_k (\Bobj_r) = \Bobj_{r+k},\qquad r \in \N,
\end{equation}
and on morphisms by
\begin{equation} \label{alpaca}
    \bA_k
    \left(
        \begin{tikzpicture}[centerzero]
            \draw[multi] (0,-0.5) \botlabel{r} -- (0,0.5) \toplabel{s};
            \coupon{0,0}{f};
        \end{tikzpicture}
    \right)
    =
    \begin{tikzpicture}[centerzero]
        \draw[multi] (0,-0.5) \botlabel{r} -- (0,0.5) \toplabel{s};
        \draw[multi] (0.4,-0.5) \botlabel{k} -- (0.4,0.5);
        \coupon{0,0}{f};
    \end{tikzpicture}
    \ ,
\end{equation}
for any morphism $f \colon \Bobj_r \to \Bobj_s$ in $\iqB(q,t)$.  It is straightforward to verify that these define endofunctors of $\iqB(q,t)$, for example, that they respect the defining relations of $\iqB(q,t)$.

\begin{prop}
    We have natural transformations
    \begin{gather} \label{cheese1}
        \bA(\posupcross) \colon \bA_2 \to \bA_2,\quad
        \bA(\negupcross) \colon \bA_2 \to \bA_2,
        \\ \label{cheese2}
        \bA(\rightcup) \colon \id \to \bA_2,\quad
        \bA(\rightcap) \colon \bA_2 \to \id,\quad
        \bA(\leftcup) \colon \id \to \bA_2,\quad
        \bA(\leftcap) \colon \bA_2 \to \id,
    \end{gather}
    with components given as follows:
    \begin{gather} \label{meat1}
        \bA(\posupcross)_{\Bobj_r} = \thickstrand{r} \poscross,\quad
        \bA(\negupcross)_{\Bobj_r} = \thickstrand{r} \negcross,
        \\ \label{meat2}
        \bA(\rightcup)_{\Bobj_r} =
        \begin{tikzpicture}[centerzero,cscale]
            \draw[multi] (0,-0.6) \botlabel{r} -- (0,-0.2) \braidup (-0.4,0.2);
            \draw[wipe] (-0.4,-0.2) \braidup (0,0.2);
            \draw (-0.4,0.6) \braiddown (-0.8,0.2)  -- (-0.8,-0.2) to[out=down,in=down,looseness=2] (-0.4,-0.2) \braidup (0,0.2) -- (0,0.6);
            \draw[wipe] (-0.4,0.2) \braidup (-0.8,0.6);
            \draw[multi] (-0.4,0.2) \braidup (-0.8,0.6);
        \end{tikzpicture}
        \ ,\quad
        \bA(\leftcup)_{\Bobj_r} = q^{-1}t\
        \begin{tikzpicture}[centerzero,cscale]
            \draw[multi] (-0.4,0.2) \braidup (-0.8,0.6);
            \draw[wipe] (-0.4,0.6) \braiddown (-0.8,0.2);
            \draw (-0.4,0.6) \braiddown (-0.8,0.2)  -- (-0.8,-0.2) to[out=down,in=down,looseness=2] (-0.4,-0.2) \braidup (0,0.2) -- (0,0.6);
            \draw[wipe] (0,-0.2) \braidup (-0.4,0.2);
            \draw[multi] (0,-0.6) \botlabel{r} -- (0,-0.2) \braidup (-0.4,0.2);
        \end{tikzpicture}
        \ ,\quad
        \bA(\leftcap)_{\Bobj_r} =
        \begin{tikzpicture}[centerzero,cscale]
            \draw[multi] (-0.4,-0.2) \braidup (0,0.2) -- (0,0.6);
            \draw[wipe] (-0.4,0.2) \braiddown (0,-0.2);
            \draw (-0.4,-0.6) \braidup (-0.8,-0.2) -- (-0.8,0.2) to[out=up,in=up,looseness=2] (-0.4,0.2) \braiddown (0,-0.2) -- (0,-0.6);
            \draw[wipe] (-0.8,-0.6) \braidup (-0.4,-0.2);
            \draw[multi] (-0.8,-0.6) \botlabel{r} \braidup (-0.4,-0.2);
        \end{tikzpicture}
        \ ,\quad
        \bA(\rightcap)_{\Bobj_r} = qt^{-1}\,
        \begin{tikzpicture}[centerzero,cscale]
            \draw[multi] (-0.8,-0.6) \botlabel{r} \braidup (-0.4,-0.2);
            \draw[wipe] (-0.4,-0.6) \braidup (-0.8,-0.2);
            \draw (-0.4,-0.6) \braidup (-0.8,-0.2) -- (-0.8,0.2) to[out=up,in=up,looseness=2] (-0.4,0.2) \braiddown (0,-0.2) -- (0,-0.6);
            \draw[wipe] (-0.4,-0.2) \braidup (0,0.2);
            \draw[multi] (-0.4,-0.2) \braidup (0,0.2) -- (0,0.6);
        \end{tikzpicture}
        \ .
    \end{gather}
\end{prop}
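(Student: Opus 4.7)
The plan is to verify directly that the morphism families in \cref{meat1,meat2} define natural transformations between the indicated endofunctors of $\iqB(q,t)$. Concretely, for each $\alpha$ in the list \cref{cheese1,cheese2} and each generating morphism $g \colon \Bobj_r \to \Bobj_s$ of $\iqB(q,t)$ (a cup, cap, or crossing, per \cref{Bcupcap,Bcross}), I check commutativity of the corresponding naturality square. Since $\bA_k$ acts on morphisms by adjoining $k$ vertical strands on the right \cref{alpaca}, this amounts to showing that the component diagrams of $\bA(\alpha)$ can be pulled past $g$ sitting on the left of a horizontal juxtaposition in $\iqB(q,t)$.

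For the crossing components $\bA(\posupcross)$ and $\bA(\negupcross)$ the verification is immediate: naturality against a cup or cap on the left is the far-commutativity relation \cref{Bcommute}; naturality against a thick crossing overlapping the moving pair is the braid relation \cref{Bbraid}; and naturality against a disjoint thick crossing is again \cref{Bcommute}.

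For the cup and cap components \cref{meat2}, whose defining diagrams are wraps in which two strands emerge from the right of an $r$-strand bundle, braid around it, and close up on top or bottom, naturality against a cup or cap on the left is once more \cref{Bcommute}. The substantive case is naturality against a thick crossing. Pulling a crossing through such a wrap requires iterated applications of \cref{Bbraid,Bskein}, the humps relations \cref{Bhumps,Bhumps2}, the curl relations \cref{Bcurl,Bcurl2,eggnog}, and the reflected hump identity \cref{camel}. The scalar factors $q^{-1}t$ and $qt^{-1}$ appearing in \cref{meat2} are precisely those needed to absorb the twisted curls that arise when a crossing is dragged through the wrap.

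The main obstacle will be organizing this case analysis cleanly, since a priori each of the four cup/cap components must be tested against each of the two crossing generators. To halve the workload I will exploit the symmetries of $\iqB(q,t)$ provided by the horizontal-reflection isomorphism $\Omega_\updownarrow$ and the bar involution $\Xi$ (see \cref{lem:bar} and the lemma preceding it). Under $\Omega_\updownarrow$, the wrap diagrams defining $\bA(\rightcup)$ and $\bA(\leftcup)$ are sent to those defining $\bA(\leftcap)$ and $\bA(\rightcap)$, while positive and negative crossings are exchanged; the bar involution $\Xi$ additionally flips crossings while fixing cups and caps. Together these reduce the essential verifications to a short list of diagrammatic identities, each of which I plan to dispatch by a finite sequence of applications of the relations named above.
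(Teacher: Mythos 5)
There is a genuine gap: you have inverted which naturality squares are substantive and which relations prove them. For the wrap components in \cref{meat2}, you claim that naturality against a cup or cap generator ``is once more \cref{Bcommute}'' and that the hard case is naturality against a crossing. This is backwards. The relation \cref{Bcommute} only lets you commute a generator $f$ past a \emph{crossing} acting on disjoint strands; it says nothing about commuting a cup or cap generator past the wrap, whose own cup and whose legs braiding over the $r$-bundle genuinely interact with $f$. Concretely, for $\bA(\rightcup)$ the naturality square against $f=\Bcup\,\thickstrand{r}$ is exactly the third relation in \cref{Bhumps} (a defining relation of $\iqB(q,t)$, not a consequence of far-commutativity), and the square against $f=\Bcap\,\thickstrand{r}$ is the identity \cref{camel}, whose proof is a nontrivial computation with the skein and curl relations. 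If you follow your plan literally you will get stuck at these two cases, since \cref{Bcommute} does not apply.

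Conversely, the case you flag as ``substantive'' --- naturality of the wrap against a thick crossing --- needs only \cref{Bbraid} and \cref{Bcommute} (the crossing slides through the wrap's legs by Reidemeister-III-type moves); no skein, hump, or curl relations are required there, and the scalars $q^{-1}t$, $qt^{-1}$ play no role in naturality at all, since a scalar multiple of a natural transformation is natural. Your idea of using $\Omega_\updownarrow$ and $\Xi$ to halve the case analysis is sound and compatible with the paper's approach, but the proof does not go through until you correctly attribute the cup-against-cup and cup-against-cap squares to \cref{Bhumps} and \cref{camel} respectively.
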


\begin{proof}
    To show that $\bA(\posupcross)$ is a natural transformation, we must show that, for $r,s \in \N$, and $f \colon \Bobj_r \to \Bobj_s$ a morphism in $\iqB(q,t)$, the diagram
    \begin{equation}
        \xymatrix{
            \bA_2(\Bobj_s) \ar[rr]^{\thickstrand{s} \poscross} &&
            \bA_2(\Bobj_s)
            \\
            \bA_2(\Bobj_r) \ar[rr]_{\thickstrand{r}\poscross} \ar[u]^{\bA_2(f)} &&
            \bA_2(\Bobj_r) \ar[u]_{\bA_2(f)}
        }
    \end{equation}
    commutes.  This follows immediately from \cref{Bcommute}.  Hence $\bA(\posupcross)$ is a natural transformation.  The proof that $\bA(\negupcross)$ is a natural transformation is similar.

    To show that $\bA(\rightcup)$ is a natural transformation, we must show that, for $f$ a generating morphism of $\iqB(q,t)$, the diagram
    \[
        \begin{tikzcd}
            \Bobj_s \arrow{rr}{\bA(\rightcup)_{\Bobj_s}} & &
            \bA_2(\Bobj_s)
            \\
            \Bobj_r \arrow{rr}{\bA(\rightcup)_{\Bobj_r}} \arrow{u}{f} & &
            \bA_2(\Bobj_r) \arrow[swap]{u}{\bA_2(f)}
        \end{tikzcd}
    \]
    commutes.  In other words, we must show that
    \begin{equation} \label{rogue}
        \begin{tikzpicture}[centerzero]
            \draw[multi] (0,-1) -- (0,-0.6) \braidup (-0.4,-0.2);
            \draw[wipe] (0,-0.2) \braiddown (-0.4,-0.6);
            \draw (0,1) -- (0,-0.2) \braiddown (-0.4,-0.6) to[out=down,in=down,looseness=2] (-0.8,-0.6) -- (-0.8,-0.2) \braidup (-0.4,0.2) -- (-0.4,1);
            \draw[wipe] (-0.4,-0.2) \braidup (-0.8,0.2);
            \draw[multi] (-0.4,-0.2) \braidup (-0.8,0.2) -- (-0.8,1);
            \coupon{-0.8,0.5}{f};
        \end{tikzpicture}
        \ =\
        \begin{tikzpicture}[centerzero]
            \draw[multi] (0,-1) -- (0,0.2) \braidup (-0.4,0.6);
            \draw[wipe] (0,0.6) \braiddown (-0.4,0.2);
            \draw (0,1) -- (0,0.6) \braiddown (-0.4,0.2) to[out=down,in=down,looseness=2] (-0.8,0.2) -- (-0.8,0.6) \braidup (-0.4,1);
            \draw[wipe] (-0.4,0.6) \braidup (-0.8,1);
            \draw[multi] (-0.4,0.6) \braidup (-0.8,1);
            \coupon{0,-0.5}{f};
        \end{tikzpicture}
        \ .
    \end{equation}
    When
    \(
        f =
        \begin{tikzpicture}[centerzero]
            \draw (-0.15,0.2) -- (-0.15,0.1) arc(180:360:0.15) -- (0.15,0.2);
            \draw[multi] (0.45,-0.2) \botlabel{r} -- (0.45,0.2);
        \end{tikzpicture},
    \)
    this is the third relation in \cref{Bhumps}.  When
    \(
        f =
        \begin{tikzpicture}[centerzero]
            \draw (-0.15,-0.2) -- (-0.15,-0.1) arc(180:0:0.15) -- (0.15,-0.2);
            \draw[multi] (0.45,-0.2) \botlabel{r} -- (0.45,0.2);
        \end{tikzpicture},
    \)
    it is \cref{camel}.  Finally, when $f = \thickstrand{r} \negcross \thickstrand{s}$, \cref{rogue} follows from \cref{Bbraid,Bcommute}.  The proofs that $\bA(\leftcup)$, $\bA(\rightcap)$, and $\bA(\leftcap)$ are natural transformations are analogous.
\end{proof}

We will show in \cref{camping} that $\bA$ yields a well-defined functor from $\OS(q,t)$.  First, we extend $\bA$ to compositions and tensor products of generating morphisms by requiring that $\bA$ commute with these two operations.  We will denote horizontal composition of natural transformations by $*$ and the identity natural transformation of a functor $\bH$ by $\id_{\bH}$.  

\begin{lem}
    For $n \in \N$,
    \begin{gather} \label{train1}
        \bA(\posrightcross)_{\Bobj_r}
        = \thickstrand{r} \poscross,
        \quad
        \bA(\negleftcross)_{\Bobj_r}
        = \thickstrand{r} \negcross,
        \quad
        \bA(\posdowncross)_{\Bobj_r}
        = \thickstrand{r} \poscross,
        \quad
        \bA(\negdowncross)_{\Bobj_r}
        = \thickstrand{r} \negcross,
        \\ \label{train2}
        \bA(\negrightcross)_{\Bobj_r}
        =
        \begin{tikzpicture}[centerzero,cscale]
            \draw[multi] (0,-1.2) \botlabel{r} -- (0,1.2);
            \draw (0.8,-1.2) -- (0.8,-0.3) \braidup (0.4,0.3) -- (0.4,1.2);
            \draw[wipe] (0.4,-0.3) \braidup (0.8,0.3);
            \draw (0.4,-1.2) -- (0.4,-0.3) \braidup (0.8,0.3) -- (0.8,1.2);
        \end{tikzpicture}
        - (q^2-1)t^{-1}
        \begin{tikzpicture}[centerzero,cscale]
            \draw (0.8,-1.2) -- (0.8,-0.8) \braidup (0.4,-0.4) to[out=up,in=up,looseness=2] (0,-0.4) -- (0,-0.8);
            \draw (0.4,1.2) \braiddown (0,0.8) -- (0,0.4) to[out=down,in=down,looseness=2] (0.4,0.4);
            \draw[wipe] (0.4,-0.8) \braidup (0.8,-0.4) -- (0.8,0.4) \braidup (0.4,0.8) \braidup (0,1.2);
            \draw[multi] (0,-1.2) \botlabel{r} \braidup (0.4,-0.8) \braidup (0.8,-0.4) -- (0.8,0.4) \braidup (0.4,0.8) \braidup (0,1.2);
            \draw[wipe] (0.4,-1.2) \braidup (0,-0.8);
            \draw (0.4,-1.2) \braidup (0,-0.8);
            \draw[wipe] (0.4,0.4) \braidup (0.8,0.8) -- (0.8,1.2);
            \draw (0.4,0.4) \braidup (0.8,0.8) -- (0.8,1.2);
        \end{tikzpicture}
        \ ,\qquad
        \bA(\posleftcross)_{\Bobj_r}
        =
        \begin{tikzpicture}[centerzero,cscale]
            \draw[multi] (0,-1.2) \botlabel{r} -- (0,1.2);
            \draw (0.4,-1.2) -- (0.4,-0.3) \braidup (0.8,0.3) -- (0.8,1.2);
            \draw[wipe] (0.8,-0.3) \braidup (0.4,0.3);
            \draw (0.8,-1.2) -- (0.8,-0.3) \braidup (0.4,0.3) -- (0.4,1.2);
        \end{tikzpicture}
        + (q^{-2}-1) t
        \begin{tikzpicture}[centerzero,cscale]
            \draw (0.4,0.4) \braidup (0.8,0.8) -- (0.8,1.2);
            \draw[wipe] (0.4,-1.2) \braidup (0,-0.8);
            \draw (0.4,-1.2) \braidup (0,-0.8);
            \draw[wipe] (0,-1.2) \braidup (0.4,-0.8) \braidup (0.8,-0.4) -- (0.8,0.4) \braidup (0.4,0.8) \braidup (0,1.2);
            \draw[multi] (0,-1.2) \botlabel{r} \braidup (0.4,-0.8) \braidup (0.8,-0.4) -- (0.8,0.4) \braidup (0.4,0.8) \braidup (0,1.2);
            \draw[wipe] (0.4,1.2) \braiddown (0,0.8) -- (0,0.4) to[out=down,in=down,looseness=2] (0.4,0.4);
            \draw (0.4,1.2) \braiddown (0,0.8) -- (0,0.4) to[out=down,in=down,looseness=2] (0.4,0.4);
            \draw[wipe] (0.8,-1.2) -- (0.8,-0.8) \braidup (0.4,-0.4) to[out=up,in=up,looseness=2] (0,-0.4) -- (0,-0.8);
            \draw (0.8,-1.2) -- (0.8,-0.8) \braidup (0.4,-0.4) to[out=up,in=up,looseness=2] (0,-0.4) -- (0,-0.8);
        \end{tikzpicture}
        \ .
    \end{gather}
\end{lem}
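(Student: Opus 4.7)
The strategy is to express each oriented crossing on the left-hand side as a composite of the basic generators $\posupcross, \negupcross, \rightcup, \rightcap, \leftcup, \leftcap$ of $\OS(q,t)$ using the defining formulas \cref{lego,windmill}, to apply the functor $\bA$ (which commutes with composition and tensor product by construction), and to simplify the resulting morphism in $\iqB(q,t)$ using the already-established diagrammatic relations. The components of $\bA$ on the basic generators are given explicitly by \cref{meat1,meat2}, so each $\bA(\posrightcross)_{\Bobj_r}$, and so on, becomes a fully explicit composite of thick and thin strands with cups, caps, and crossings.

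For the four identities in \cref{train1}, direct substitution produces, in each case, the thick strand $\thickstrand{r}$ together with a thin $\posupcross$ (or $\negupcross$) sandwiched between cup/cap pairs arising from \cref{meat2}. The scalars $q^{-1}t$ and $qt^{-1}$ in \cref{meat2} are calibrated so as to cancel in pairs in these combinations. Repeated use of the snake relations \cref{Bhumps,Bhumps2}, together with the braid relations \cref{Bbraid} and the interchange law \cref{Bcommute}, then collapses the surrounding cup/cap configuration, leaving exactly $\thickstrand{r}\poscross$ or $\thickstrand{r}\negcross$. The identities for $\posdowncross$ and $\negdowncross$ are proved by the same mechanism, with the reorientation of both strands requiring two successive snake collapses instead of one.

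For the two identities in \cref{train2}, the analogous expansion does not simplify cleanly: the windmill definitions in \cref{windmill} introduce an interior crossing whose orientation prevents direct use of the snake relations. Instead one applies the skein relation \cref{Bskein} to that interior crossing, splitting the expression into two pieces: a flipped-crossing term that reduces via the argument of the preceding paragraph to the first displayed diagram in \cref{train2}, and a residual term in which the interior crossing is replaced by the identity on two strands. The residual term, after applying the braid relations and the curl relations \cref{Bcurl,Bcurl2,eggnog} to simplify the resulting loop structure, becomes the second displayed diagram of \cref{train2}; its coefficient arises as the factor $q - q^{-1}$ from \cref{Bskein} multiplied by the surviving scalar $qt^{-1}$ from $\bA(\rightcap)$ (respectively $q^{-1}t$ from $\bA(\leftcup)$), yielding $(q^2-1)t^{-1}$ (respectively $(q^{-2}-1)t$) up to sign. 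The main obstacle is the careful bookkeeping of these scalar factors, and the correct choice of which interior crossing to flip with the skein relation; everything else is routine diagram manipulation of the type already amply illustrated in the preceding proofs of \cref{Bhumps2} and \cref{camel}.
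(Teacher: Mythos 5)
Your proposal follows essentially the same route as the paper: expand each rotated crossing via \cref{lego,windmill}, compute the components of $\bA$ on the resulting composites using \cref{meat1,meat2,alpaca} and horizontal composition of natural transformations, and then simplify inside $\iqB(q,t)$ (the paper does this with \cref{Bbraid}, \cref{Bcommute}, and the curl relations \cref{Bcurl,eggnog}, which is where the stray scalar $qt^{-1}$ actually gets absorbed, rather than by pairwise cancellation with $q^{-1}t$). Your treatment of \cref{train2} via the skein relation, with the coefficients $(q^2-1)t^{-1}$ and $(q^{-2}-1)t$ arising as $\pm(q-q^{-1})$ times the surviving cup/cap scalars, is exactly the "analogous" computation the paper omits.
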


\begin{proof}
    We first compute
    \begin{gather*}
        \bA( \downarrow \uparrow \rightcap )_{\Bobj_r}
        = \big( \bA(\rightcap) * \bA(\uparrow) * \bA(\downarrow) \big)_{\Bobj_r}
        = \big( \bA(\rightcap) * \id_{\bA_2} \big)_{\Bobj_r}
        = \bA( \rightcap )_{\Bobj_{r+2}}
        = qt^{-1}\,
        \begin{tikzpicture}[centerzero,cscale]
            \draw[multi] (-0.8,-0.6) \braidup (-0.4,-0.2);
            \draw[wipe] (-0.4,-0.6) \braidup (-0.8,-0.2);
            \draw (-0.4,-0.6) \braidup (-0.8,-0.2) -- (-0.8,0.2) to[out=up,in=up,looseness=2] (-0.4,0.2) \braiddown (0,-0.2) -- (0,-0.6);
            \draw[wipe] (-0.4,-0.2) \braidup (0,0.2);
            \draw[multi] (-0.4,-0.2) \braidup (0,0.2) -- (0,0.6) \toplabel{r+2};
        \end{tikzpicture}
        \, ,
        \\
        \bA(\downarrow \posupcross \downarrow)_{\Bobj_r}
        = \big( \id_{\bA_1} * \bA(\posupcross) * \id_{\bA_1} \big)_{\Bobj_r}
        = \bA_1 \big( \bA(\posupcross)_{\Bobj_{r+1}} \big)
        = \bA_1 \left( \thickstrand{r+1} \poscross \right)
        \overset{\cref{alpaca}}{=} \thickstrand{r+1} \poscross\ \ \strand\, ,
        \\
        \bA(\rightcup \uparrow \downarrow)_{\Bobj_r}
        = \big( \id_{\bA_2} * \bA(\rightcup) \big)_{\Bobj_r}
        = \bA_2 \big( \bA(\rightcup)_{\Bobj_r} \big)
        = \bA_2
        \left(
            \begin{tikzpicture}[centerzero,cscale]
                \draw[multi] (0,-0.6) \botlabel{r} -- (0,-0.2) \braidup (-0.4,0.2);
                \draw[wipe] (-0.4,-0.2) \braidup (0,0.2);
                \draw (-0.4,0.6) \braiddown (-0.8,0.2)  -- (-0.8,-0.2) to[out=down,in=down,looseness=2] (-0.4,-0.2) \braidup (0,0.2) -- (0,0.6);
                \draw[wipe] (-0.4,0.2) \braidup (-0.8,0.6);
                \draw[multi] (-0.4,0.2) \braidup (-0.8,0.6);
            \end{tikzpicture}
        \right)
        \overset{\cref{alpaca}}{=}
        \begin{tikzpicture}[centerzero,cscale]
            \draw[multi] (0,-0.6) \botlabel{r} -- (0,-0.2) \braidup (-0.4,0.2);
            \draw[wipe] (-0.4,-0.2) \braidup (0,0.2);
            \draw (-0.4,0.6) \braiddown (-0.8,0.2)  -- (-0.8,-0.2) to[out=down,in=down,looseness=2] (-0.4,-0.2) \braidup (0,0.2) -- (0,0.6);
            \draw[wipe] (-0.4,0.2) \braidup (-0.8,0.6);
            \draw[multi] (-0.4,0.2) \braidup (-0.8,0.6);
            \draw (0.4,-0.6) -- (0.4,0.6);
            \draw (0.8,-0.6) -- (0.8,0.6);
        \end{tikzpicture}
        \ .
    \end{gather*}
    Thus,
    \begin{multline*}
        \bA(\posrightcross)_{\Bobj_r}
        \overset{\cref{lego}}{=}
        \bA
        \left(
            \begin{tikzpicture}[centerzero,xscale=-1]
                \draw[->] (-0.2,-0.3) \braidup (0.2,0.3);
                \draw[wipe] (-0.2,0.2)  \braidup (0.2,-0.2);
                \draw[->] (0.4,0.3) -- (0.4,0.1) to[out=down,in=right] (0.2,-0.2) to[out=left,in=right] (-0.2,0.2) to[out=left,in=up] (-0.4,-0.1) -- (-0.4,-0.3);
            \end{tikzpicture}
        \right)_{\Bobj_r}
        =
        \bA( \downarrow \uparrow \rightcap )_{\Bobj_r} \circ \bA(\downarrow \posupcross \downarrow)_{\Bobj_r} \circ \bA(\rightcup \uparrow \downarrow)_{\Bobj_r}
        \\
        = qt^{-1}\
        \begin{tikzpicture}[centerzero,cscale]
            \draw (0.8,-2.2) -- (0.8,0.6) \braidup (0.4,1) \braidup (0,1.4) \braidup (-0.4,1.8) to[out=up,in=up,looseness=2] (-0.8,1.8) -- (-0.8,0.6);
            \draw[wipe] (0.4,0.6) \braidup (0.8,1);
            \draw (0.4,-2.2) -- (0.4,-1) \braidup (0,-0.6) \braidup (0.4,-0.2) -- (0.4,0.6) \braidup (0.8,1) -- (0.8,2.2);
            \draw[wipe] (0.4,1.4) \braiddown (0,1);
            \draw (0.4,2.2) -- (0.4,1.4) \braiddown (0,1) -- (0,0.2) \braiddown (-0.4,-0.2) -- (-0.4,-1) \braiddown (-0.8,-1.4) -- (-0.8,-1.8) to[out=down,in=down,looseness=2] (-0.4,-1.8);
            \draw[wipe] (0,-2.2) -- (0,-1.8) \braidup (-0.4,-1.4) \braidup (-0.8,-1) -- (-0.8,0.2) \braidup (-0.4,0.6) -- (-0.4,1.4) \braidup (0,1.8) -- (0,2.2);
            \draw[multi] (0,-2.2) \botlabel{r} -- (0,-1.8) \braidup (-0.4,-1.4) \braidup (-0.8,-1) -- (-0.8,0.2) \braidup (-0.4,0.6) -- (-0.4,1.4) \braidup (0,1.8) -- (0,2.2);
            \draw[wipe] (-0.4,-1.8) \braidup (0,-1.4) -- (0,-1) \braidup (0.4,-0.6) \braidup (0,-0.2) \braidup (-0.4,0.2) \braidup (-0.8,0.6);
            \draw (-0.4,-1.8) \braidup (0,-1.4) -- (0,-1) \braidup (0.4,-0.6) \braidup (0,-0.2) \braidup (-0.4,0.2) \braidup (-0.8,0.6);
        \end{tikzpicture}
        \overset{\cref{Bbraid}}{=} qt^{-1}\
        \begin{tikzpicture}[centerzero,cscale]
            \draw (0.8,-1.4) -- (0.8,-0.6) \braidup (0.4,-0.2) \braidup (0,0.2) \braidup (-0.4,0.6) -- (-0.4,1) to[out=up,in=up,looseness=2] (-0.8,1) -- (-0.8,-0.6);
            \draw[wipe] (0,0.6) \braiddown (-0.4,0.2);
            \draw (0.4,1.4) -- (0.4,1) \braiddown (0,0.6) \braiddown (-0.4,0.2) -- (-0.4,-0.6) \braiddown (-0.8,-1) to[out=down,in=down,looseness=2] (-0.4,-1);
            \draw[wipe] (-0.4,-1) \braidup (-0.8,-0.6);
            \draw (-0.4,-1)  \braidup (-0.8,-0.6);
            \draw[wipe] (0.4,-0.6) \braidup (0.8,-0.2);
            \draw (0.4,-1.4) -- (0.4,-0.6) \braidup (0.8,-0.2) -- (0.8,1.4);
            \draw[wipe] (0,-0.2) \braidup (0.4,0.2) -- (0.4,0.6) \braidup (0,1);
            \draw[multi] (0,-1.4) \botlabel{r} -- (0,-0.2) \braidup (0.4,0.2) -- (0.4,0.6) \braidup (0,1) -- (0,1.4);
        \end{tikzpicture}
        \overset{\cref{eggnog}}{\underset{\cref{Bcurl}}{=}}
        \begin{tikzpicture}[centerzero,cscale]
            \draw (0.8,-1.4) -- (0.8,-0.6) \braidup (0.4,-0.2) \braidup (0,0.2) -- (0,0.6) \braidup (0.4,1) -- (0.4,1.4);
            \draw[wipe] (0.4,-0.6) \braidup (0.8,-0.2);
            \draw (0.4,-1.4) -- (0.4,-0.6) \braidup (0.8,-0.2) -- (0.8,1.4);
            \draw[wipe] (0,-0.2) \braidup (0.4,0.2) -- (0.4,0.6) \braidup (0,1);
            \draw[multi] (0,-1.4) \botlabel{r} -- (0,-0.2) \braidup (0.4,0.2) -- (0.4,0.6) \braidup (0,1) -- (0,1.4);
        \end{tikzpicture}
        \overset{\cref{Bbraid}}{=}
        \begin{tikzpicture}[centerzero,cscale]
            \draw[multi] (0,-1.4) \botlabel{r} -- (0,1.4);
            \draw (0.8,-1.4) -- (0.8,-0.2) \braidup (0.4,0.2) -- (0.4,1.4);
            \draw[wipe] (0.4,-0.2) \braidup (0.8,0.2);
            \draw (0.4,-1.4) -- (0.4,-0.2) \braidup (0.8,0.2) -- (0.8,1.4);
        \end{tikzpicture}
        \ .
    \end{multline*}
    A similar computation shows that the second equality in \cref{train1} holds.  Then we have
    \begin{multline*}
        \bA(\posdowncross)_{\Bobj_r}
        = \bA
        \left(
            \begin{tikzpicture}[centerzero,xscale=-1]
                \draw[->] (0.4,0.3) -- (0.4,0.1) to[out=down,in=right] (0.2,-0.2) to[out=left,in=right] (-0.2,0.2) to[out=left,in=up] (-0.4,-0.1) -- (-0.4,-0.3);
                \draw[wipe] (-0.2,-0.3) \braidup (0.2,0.3);
                \draw[<-] (-0.2,-0.3) \braidup (0.2,0.3);
            \end{tikzpicture}
        \right)_{\Bobj_r}
        =
        \bA( \downarrow \downarrow \rightcap )_{\Bobj_r} \circ \bA(\downarrow \posrightcross \downarrow)_{\Bobj_r} \circ \bA(\rightcup \downarrow \downarrow)_{\Bobj_r}
        \\
        = \bA( \downarrow \downarrow \rightcap )_{\Bobj_r} \circ \left( \thickstrand{r+1} \poscross\ \ \strand \right) \circ \bA(\rightcup \downarrow \downarrow)_{\Bobj_r}
        = \thickstrand{r} \poscross,
    \end{multline*}
    where the third equality above follows from the first equality in \cref{train1}, and then the final equality above follows from our earlier computation of $\bA(\posrightcross)_{\Bobj_r}$.  The proof of the final equality in \cref{train1} is analogous, as are the relations \cref{train2}.
\end{proof}

\begin{theo} \label{camping}
    We have a strict monoidal functor $\bA \colon \OS(q,t) \to \cEnd(\iqB(q,t))^\rev$ given on objects by $\upobj, \downobj \mapsto \bA_1$, and on morphisms by \cref{cheese1,cheese2}.
\end{theo}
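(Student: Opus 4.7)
The plan is to build $\bA$ in two stages. First, extend the prescription on generators to all of $\OS(q,t)$ by declaring that $\bA$ commutes with vertical composition and sends tensor product of morphisms to horizontal composition of natural transformations (which is the monoidal structure on $\cEnd(\iqB(q,t))^\rev$). Then verify well-definedness by checking that each defining relation of $\OS(q,t)$ is preserved. On objects, setting $\bA(\lambda) = \bA_{|\lambda|}$ for every word $\lambda \in \word$ gives strict monoidality immediately from $\bA_k \circ \bA_l = \bA_{k+l}$, where the order is consistent with the reversal in $\cEnd(\iqB(q,t))^\rev$.

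To check well-definedness, note that equality of natural transformations can be tested objectwise. Thus for each defining relation of $\OS(q,t)$ it suffices to evaluate both sides at an arbitrary $\Bobj_r$ and verify the resulting equality in $\iqB(q,t)$. The invertibility relations \cref{obraid} and the skein relation \cref{oskein}, evaluated at $\Bobj_r$, translate directly into \cref{Bbraid} and \cref{Bskein} with an ambient $r$-strand, using \cref{train1,train2} to interpret the mixed-orientation crossings. The right-curl equation and the bubble equation of \cref{ocurlbub}, evaluated at $\Bobj_r$, reduce after a brief diagrammatic manipulation to the first and fourth relations of \cref{Bcurl}.

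The main obstacle is the pair of adjunction relations \cref{adjunction}. Evaluated at $\Bobj_r$, the composites $\bA(\rightcap \otimes \upobj)_{\Bobj_r} \circ \bA(\upobj \otimes \leftcup)_{\Bobj_r}$ and its counterpart become explicit diagrams in $\iqB(q,t)$ obtained by stacking the ``twisted cup/cap'' morphisms of \cref{meat2}. Using \cref{Bbraid,Bcommute} to rearrange the crossings through which the ambient $r$-strand threads, these reduce via the humps identities \cref{Bhumps,Bhumps2} together with \cref{Bcurl,Bcurl2,eggnog} to the identity of $\Bobj_{r+1}$; the scalar factors $q^{-1}t$ and $qt^{-1}$ in \cref{meat2} are precisely the ones needed to absorb the curls produced along the way. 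The analogous verification for the downward-pointing cup and cap is obtained by applying the isomorphism $\Omega_\updownarrow$ of \cref{horizonDS}.

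Once all relations have been verified on each $\Bobj_r$, $\bA$ is a well-defined $\kk$-linear functor, and strict monoidality on morphisms holds by construction since tensor product of generators is sent to horizontal composition of natural transformations. Taken together with the monoidality on objects noted above, this completes the construction of the strict monoidal functor $\bA\colon \OS(q,t) \to \cEnd(\iqB(q,t))^\rev$, and hence equips $\iqB(q,t)$ with the structure of a strict right $\OS(q,t)$-module category.
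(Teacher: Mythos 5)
Your proposal is correct and follows essentially the same route as the paper: extend $\bA$ multiplicatively and verify each defining relation of $\OS(q,t)$ objectwise at $\Bobj_r$, reducing \cref{obraid,oskein} to \cref{Bbraid,Bskein} via \cref{train1,train2}, \cref{ocurlbub} to \cref{Bcurl}, and \cref{adjunction} to a direct diagrammatic computation using \cref{Bbraid,Bcommute,eggnog,Bcurl}. The only cosmetic difference is that the paper disposes of the second adjunction relation by a symmetric direct computation rather than by invoking $\Omega_\updownarrow$, whose compatibility with $\bA$ you would otherwise have to check (and note that the relevant involution here is the one on $\iqB(q,t)$, not the one of \cref{horizonDS}).
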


\begin{proof}
    We must show that $\bA$ respects the relations \cref{obraid,oskein,ocurlbub,adjunction}.  The first three equalities in \cref{obraid} follow immediately from \cref{Bbraid}.  The last two equalities in \cref{obraid} follow from \cref{train1}.  The fact that $\bA$ respects \cref{oskein} follows from \cref{Bskein}.

    For the first equality in \cref{ocurlbub}, we compute
    \[
        \bA
        \left(
            \begin{tikzpicture}[centerzero,xscale=-1]
                \draw (0,-0.4) to[out=up,in=180] (0.25,0.15) to[out=0,in=up] (0.4,0);
                \draw[wipe] (0.25,-0.15) to[out=180,in=down] (0,0.4);
                \draw[->] (0.4,0) to[out=down,in=0] (0.25,-0.15) to[out=180,in=down] (0,0.4);
            \end{tikzpicture}
        \right)_{\Bobj_r}
        =
        \bA(\leftcap \uparrow)_{\Bobj_r} \circ \bA(\downarrow \posupcross)_{\Bobj_r} \circ \bA(\rightcup \uparrow)_{\Bobj_r}
        =
        \begin{tikzpicture}[centerzero,cscale]
            \draw (0.4,-1.4) -- (0.4,-0.2) \braidup (0,0.2) -- (0,0.6);
            \draw[wipe] (0.4,0.2) \braiddown (0,-0.2);
            \draw (0.4,1.4) -- (0.4,0.2) \braiddown (0,-0.2) -- (0,-0.6);
            \draw[multi] (0,1.4) -- (0,1) \braiddown (-0.4,0.6);
            \draw[multi] (0,-1.4) \botlabel{r} -- (0,-1) \braidup (-0.4,-0.6);
            \draw[wipe] (0,0.6) \braidup (-0.4,1) to[out=up,in=up,looseness=2] (-0.8,1) -- (-0.8,0.6) \braiddown (-0.4,0.2) -- (-0.4,-0.2) \braiddown (-0.8,-0.6) -- (-0.8,-1) to[out=down,in=down,looseness=2] (-0.4,-1) \braidup (0,-0.6);
            \draw (0,0.6) \braidup (-0.4,1) to[out=up,in=up,looseness=2] (-0.8,1) -- (-0.8,0.6) \braiddown (-0.4,0.2) -- (-0.4,-0.2) \braiddown (-0.8,-0.6) -- (-0.8,-1) to[out=down,in=down,looseness=2] (-0.4,-1) \braidup (0,-0.6);
            \draw[wipe] (-0.4,0.6) \braiddown (-0.8,0.2) -- (-0.8,-0.2) \braiddown (-0.4,-0.6);
            \draw[multi] (-0.4,0.6) \braiddown (-0.8,0.2) -- (-0.8,-0.2) \braiddown (-0.4,-0.6);
        \end{tikzpicture}
        \overset{\cref{Bbraid}}{\underset{\cref{Bcommute}}{=}}
        \begin{tikzpicture}[centerzero,cscale]
            \draw[multi] (-0.4,-1) \botlabel{r} \braidup (0,-0.6) -- (0,0.6) \braidup (-0.4,1);
            \draw[wipe] (0,-1) \braidup (-0.4,-0.6);
            \draw (0,-1) \braidup (-0.4,-0.6) -- (-0.4,-0.2) \braidup (-0.8,0.2) to[out=up,in=up,looseness=2] (-1.2,0.2) -- (-1.2,-0.2) to[out=down,in=down,looseness=2] (-0.8,-0.2);
            \draw[wipe] (-0.8,-0.2) \braidup (-0.4,0.2) -- (-0.4,0.6) \braidup (-0,1);
            \draw (-0.8,-0.2) \braidup (-0.4,0.2) -- (-0.4,0.6) \braidup (-0,1);
        \end{tikzpicture}
        \overset{\cref{Bcurl}}{\underset{\cref{Bbraid}}{=}} t\
        \begin{tikzpicture}[centerzero,cscale]
            \draw[multi] (0,-1) \botlabel{r+1} -- (0,1);
        \end{tikzpicture}
        = t \bA
        \left(
            \begin{tikzpicture}[centerzero]
                \draw[->] (0,-0.4) -- (0,0.4);
            \end{tikzpicture}
        \right)_{\Bobj_r},
    \]
    as desired.  A similar computation shows that $\bA$ respects the second equality in \cref{ocurlbub}.  For the third equality in \cref{ocurlbub}, we have
    \[
        \bA \left( \rightbub \right)_{\Bobj_r}
        = \bA(\rightcap)_{\Bobj_r} \circ \bA(\leftcup)_{\Bobj_r}
        =
        \begin{tikzpicture}[centerzero,cscale]
            \draw (-0.8,-0.4) -- (-0.8,-0.8) to[out=down,in=down,looseness=2] (-0.4,-0.8) \braidup (0,-0.4) -- (0,0.4) \braidup (-0.4,0.8) to[out=up,in=up,looseness=2] (-0.8,0.8) -- (-0.8,0.4);
            \draw[wipe] (0,-1.2) \botlabel{r} -- (0,-0.8) \braidup (-0.4,-0.4) \braidup (-0.8,0) \braidup (-0.4,0.4) \braidup (0,0.8) -- (0,1.2);
            \draw[multi] (0,-1.2) \botlabel{r} -- (0,-0.8) \braidup (-0.4,-0.4) \braidup (-0.8,0) \braidup (-0.4,0.4) \braidup (0,0.8) -- (0,1.2);
            \draw[wipe] (-0.8,-0.4) \braidup (-0.4,0) \braidup (-0.8,0.4);
            \draw (-0.8,-0.4) \braidup (-0.4,0) \braidup (-0.8,0.4);
        \end{tikzpicture}
        \overset{\cref{Bbraid}}{\underset{\cref{Bcurl}}{=}} \frac{t-t^{-1}}{q-q^{-1}}\,
        \begin{tikzpicture}[centerzero,cscale]
            \draw[multi] (0,-0.8) \botlabel{r} -- (0,0.8);
        \end{tikzpicture}
        = \frac{t-t^{-1}}{q-q^{-1}} \bA(1_\one)_{\Bobj_r},
    \]
    as desired.

    For the first relation in \cref{adjunction}, we compute
    \[
        \bA
        \left(
            \begin{tikzpicture}[centerzero]
                \draw[->] (-0.3,-0.4) -- (-0.3,0) arc(180:0:0.15) arc(180:360:0.15) -- (0.3,0.4);
            \end{tikzpicture}
        \right)_{\Bobj_r}
        =
        \bA(\rightcap \uparrow)_{\Bobj_r} \circ \bA(\uparrow \rightcup)_{\Bobj_r}
        = qt^{-1}\
        \begin{tikzpicture}[centerzero,cscale]
            \draw (0.4,-1.6) -- (0.4,-0.8) \braidup (0,-0.4) -- (0,0);
            \draw[multi] (0,-1.6) \botlabel{r} -- (0,-1.2) \braidup (-0.4,-0.8) -- (-0.4,-0.4);
            \draw[wipe] (-0.4,-1.2) \braidup (0,-0.8) \braidup (0.4,-0.4);
            \draw (-0.8,0.8) -- (-0.8,1.2) to[out=up,in=up,looseness=2] (-0.4,1.2) \braiddown (0,0.8) -- (0,0.4) \braiddown (-0.4,0) \braiddown (-0.8,-0.4) -- (-0.8,-1.2) to[out=down,in=down,looseness=2] (-0.4,-1.2) \braidup (0,-0.8) \braidup (0.4,-0.4) -- (0.4,1.6);
            \draw[wipe] (0,1.2) \braiddown (-0.4,0.8) \braiddown (-0.8,0.4) -- (-0.8,0) \braiddown (-0.4,-0.4);
            \draw[multi] (0,1.6) -- (0,1.2) \braiddown (-0.4,0.8) \braiddown (-0.8,0.4) -- (-0.8,0) \braiddown (-0.4,-0.4);
            \draw[wipe] (0,0) \braidup (-0.4,0.4) \braidup (-0.8,0.8);
            \draw (0,0) \braidup (-0.4,0.4) \braidup (-0.8,0.8);
        \end{tikzpicture}
        \overset{\cref{Bbraid}}{\underset{\cref{Bcommute}}{=}} qt^{-1}\
        \begin{tikzpicture}[centerzero,cscale]
            \draw[multi] (0,-1.2) \botlabel{r} \braidup (0.4,-0.8) -- (0.4,0.8) \braidup (0,1.2);
            \draw[wipe] (0.4,-1.2) \braidup (0,-0.8);
            \draw (0.4,-1.2) \braidup (0,-0.8) -- (0,-0.4) \braidup (-0.4,0);
            \draw[wipe] (0.4,1.2) \braiddown (0,0.8) -- (0,0) \braiddown (-0.4,-0.4);
            \draw (0.4,1.2) \braiddown (0,0.8) -- (0,0) \braiddown (-0.4,-0.4) to[out=down,in=down,looseness=2] (-0.8,-0.4) -- (-0.8,0) \braidup (-0.4,0.4) to[out=up,in=up,looseness=2] (-0.8,0.4);
            \draw[wipe] (-0.4,0) \braidup (-0.8,0.4);
            \draw (-0.4,0) \braidup (-0.8,0.4);
        \end{tikzpicture}
        \overset{\substack{\cref{eggnog} \\ \cref{Bcurl}}}{\underset{\cref{Bbraid}}{=}}
        \begin{tikzpicture}[centerzero,cscale]
            \draw[multi] (-0.4,-1) \botlabel{r+1} -- (-0.4,1);
        \end{tikzpicture}
        = \bA
        \left(
            \begin{tikzpicture}[centerzero]
                \draw[->] (0,-0.4) -- (0,0.4);
            \end{tikzpicture}
        \right)_{\Bobj_r}.
    \]
    The proof that $\bA$ respects the second relation in \cref{adjunction} is similar.
\end{proof}

Written in module-theoretic (as opposed to representation-theoretic) notation, as in \cref{beaver}, the action defined in \cref{camping} is as follows.  On objects,
\[
    \Bobj_r \otimes \upobj := \Bobj_{r+1},\qquad
    \Bobj_r \otimes \downobj := \Bobj_{r+1},\qquad
    r \in \N,
\]
and, on morphisms,
\begin{gather*}
    \begin{tikzpicture}[centerzero]
        \draw[multi] (0,-0.4) -- (0,0.4);
        \coupon{0,0}{f};
    \end{tikzpicture}
    \otimes \posupcross
    :=
    \begin{tikzpicture}[centerzero]
        \draw[multi] (0,-0.4) -- (0,0.4);
        \coupon{0,0}{f};
        \draw (0.6,-0.4) \braidup (0.3,0.4);
        \draw[wipe] (0.3,-0.4) \braidup (0.6,0.4);
        \draw (0.3,-0.4) \braidup (0.6,0.4);
    \end{tikzpicture}
    ,\qquad
    \begin{tikzpicture}[centerzero]
        \draw[multi] (0,-0.4) -- (0,0.4);
        \coupon{0,0}{f};
    \end{tikzpicture}
    \otimes \negupcross
    :=
    \begin{tikzpicture}[centerzero]
        \draw[multi] (0,-0.4) -- (0,0.4);
        \coupon{0,0}{f};
        \draw (0.3,-0.4) \braidup (0.6,0.4);
        \draw[wipe] (0.6,-0.4) \braidup (0.3,0.4);
        \draw (0.6,-0.4) \braidup (0.3,0.4);
    \end{tikzpicture}
    ,
    \\
    \begin{tikzpicture}[centerzero]
        \draw[multi] (0,-0.4) -- (0,0.4);
        \coupon{0,0}{f};
    \end{tikzpicture}
    \otimes \rightcup :=
    \begin{tikzpicture}[centerzero]
        \draw (0.3,0.7) -- (0.3,0.3) \braiddown (-0.5,-0.4);
        \draw[wipe] (0,-0.7) -- (0,0.7);
        \draw[multi] (0,-0.7) -- (0,0.7);
        \draw[wipe] (-0.2,-0.4) \braidup (0.6,0.1);
        \draw (-0.5,-0.4) to[out=down,in=down,looseness=2] (-0.2,-0.4) \braidup (0.6,0.1) -- (0.6,0.7);
        \coupon{0,0.4}{f};
    \end{tikzpicture}
    =
    \begin{tikzpicture}[centerzero]
        \draw (0.3,0.7) -- (0.3,0.6) \braiddown (-0.5,-0.1);
        \draw[wipe] (0,-0.7) -- (0,0.7);
        \draw[multi] (0,-0.7) -- (0,0.7);
        \draw[wipe] (-0.2,-0.1) \braidup (0.6,0.4);
        \draw (-0.5,-0.1) to[out=down,in=down,looseness=2] (-0.2,-0.1) \braidup (0.6,0.4) -- (0.6,0.7);
        \coupon{0,-0.4}{f};
    \end{tikzpicture}
    \ ,\quad
    \begin{tikzpicture}[centerzero]
        \draw[multi] (0,-0.4) -- (0,0.4);
        \coupon{0,0}{f};
    \end{tikzpicture}
    \otimes \leftcup := q^{-1}t\
    \begin{tikzpicture}[centerzero]
        \draw (-0.5,-0.4) to[out=down,in=down,looseness=2] (-0.2,-0.4) \braidup (0.6,0.1) -- (0.6,0.7);
        \draw[wipe] (0,-0.7) -- (0,0.7);
        \draw[multi] (0,-0.7) -- (0,0.7);
        \draw[wipe] (0.3,0.3) \braiddown (-0.5,-0.4);
        \draw (0.3,0.7) -- (0.3,0.3) \braiddown (-0.5,-0.4);
        \coupon{0,0.4}{f};
    \end{tikzpicture}
    = q^{-1}t\
    \begin{tikzpicture}[centerzero]
        \draw (-0.5,-0.1) to[out=down,in=down,looseness=2] (-0.2,-0.1) \braidup (0.6,0.4) -- (0.6,0.7);
        \draw[wipe] (0,-0.7) -- (0,0.7);
        \draw[multi] (0,-0.7) -- (0,0.7);
        \draw[wipe] (0.3,0.7) -- (0.3,0.6) \braiddown (-0.5,-0.1);
        \draw (0.3,0.7) -- (0.3,0.6) \braiddown (-0.5,-0.1);
        \coupon{0,-0.4}{f};
    \end{tikzpicture}
    \ ,
    \\
    \begin{tikzpicture}[centerzero]
        \draw[multi] (0,-0.4) -- (0,0.4);
        \coupon{0,0}{f};
    \end{tikzpicture}
    \otimes \leftcap :=
    \begin{tikzpicture}[centerzero,yscale=-1]
        \draw (0.3,0.7) -- (0.3,0.3) \braiddown (-0.5,-0.4);
        \draw[wipe] (0,-0.7) -- (0,0.7);
        \draw[multi] (0,-0.7) -- (0,0.7);
        \draw[wipe] (-0.2,-0.4) \braidup (0.6,0.1);
        \draw (-0.5,-0.4) to[out=down,in=down,looseness=2] (-0.2,-0.4) \braidup (0.6,0.1) -- (0.6,0.7);
        \coupon{0,0.4}{f};
    \end{tikzpicture}
    =
    \begin{tikzpicture}[centerzero,yscale=-1]
        \draw (0.3,0.7) -- (0.3,0.6) \braiddown (-0.5,-0.1);
        \draw[wipe] (0,-0.7) -- (0,0.7);
        \draw[multi] (0,-0.7) -- (0,0.7);
        \draw[wipe] (-0.2,-0.1) \braidup (0.6,0.4);
        \draw (-0.5,-0.1) to[out=down,in=down,looseness=2] (-0.2,-0.1) \braidup (0.6,0.4) -- (0.6,0.7);
        \coupon{0,-0.4}{f};
    \end{tikzpicture}
    \ ,\quad
    \begin{tikzpicture}[centerzero]
        \draw[multi] (0,-0.4) -- (0,0.4);
        \coupon{0,0}{f};
    \end{tikzpicture}
    \otimes \rightcap := qt^{-1}\
    \begin{tikzpicture}[centerzero,yscale=-1]
        \draw (-0.5,-0.4) to[out=down,in=down,looseness=2] (-0.2,-0.4) \braidup (0.6,0.1) -- (0.6,0.7);
        \draw[wipe] (0,-0.7) -- (0,0.7);
        \draw[multi] (0,-0.7) -- (0,0.7);
        \draw[wipe] (0.3,0.3) \braiddown (-0.5,-0.4);
        \draw (0.3,0.7) -- (0.3,0.3) \braiddown (-0.5,-0.4);
        \coupon{0,0.4}{f};
    \end{tikzpicture}
    = qt^{-1}\
    \begin{tikzpicture}[centerzero,yscale=-1]
        \draw (-0.5,-0.1) to[out=down,in=down,looseness=2] (-0.2,-0.1) \braidup (0.6,0.4) -- (0.6,0.7);
        \draw[wipe] (0,-0.7) -- (0,0.7);
        \draw[multi] (0,-0.7) -- (0,0.7);
        \draw[wipe] (0.3,0.7) -- (0.3,0.6) \braiddown (-0.5,-0.1);
        \draw (0.3,0.7) -- (0.3,0.6) \braiddown (-0.5,-0.1);
        \coupon{0,-0.4}{f};
    \end{tikzpicture}
    \ ,
\end{gather*}
for $f$ a morphism in $\iqB(q,t)$.

\section{Equivalence of the disoriented skein and iquantum Brauer categories}

In this section, we will show that the disoriented skein category $\DS(q,t)$ and the iquantum Brauer category $\iqB(q,t)$ are equivalent as $\OS(q,t)$-module categories.

\subsection{Morphisms of module categories}

Recall the definition of strict right module categories from \cref{subsec:DS}.  Let $(\cC,\otimes,\one)$ be a $\kk$-linear strict monoidal category, and let $\cM$, $\cN$ be strict $\cC$-modules.  A \emph{morphism of $\cC$-modules} from $\cM$ to $\cN$ is a pair $(\bH,\omega)$, where $\bH$ is a $\kk$-linear functor from $\cM$ to $\cN$ and $\omega$ is a natural isomorphism with components
\[
    \omega_{M,C} \colon \bH(M \otimes C) \xrightarrow{\cong} \bH(M) \otimes C,\qquad M \in \cM,\ C \in \cC,
\]
such that the diagram
\begin{equation} \label{buffalo}
    \begin{tikzcd}
        & \bH (M \otimes C \otimes D) \arrow[dl,"\omega_{M \otimes C, D}"'] \ar[dr,"\omega_{M, C \otimes D}"] &
        \\
        \bH(M \otimes C) \otimes D \arrow[rr,"\omega_{M,C} \otimes 1_D"] & & \bH(M) \otimes C \otimes D
    \end{tikzcd}
\end{equation}
commutes for all $M \in \cM$ and $C,D \in \mathcal{C}$.  (See \cite[Def.~7.2.1]{ENGO15} for a more general definition, where the module categories are not required to be strict.)  We say that a morphism of $\cC$-modules is \emph{strict} if $\omega_{M,C}$ is the identity morphism for all $M \in \cM$ and $C \in \cC$.  An \emph{equivalence of $\cC$-modules} is a morphism of $\cC$-modules that is also an equivalence of categories.

\subsection{Functor from the disoriented skein category to the iquantum Brauer category}

\begin{prop}
    There is a unique strict morphism of $\OS(q,t)$-modules $\bF \colon \DS(q,t) \to \iqB(q,t)$ given on objects by
    \[
        \bF(\one) = \Bobj_0
    \]
    and on morphisms by
    \[
        \bF(\togupdown) = 1_{\Bobj_1} = \bF(\togdownup).
    \]
\end{prop}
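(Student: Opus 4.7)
The plan is to invoke the universal property of $\DS(q,t)$ as a strict $\OS(q,t)$-module generated by the toggles $\togupdown, \togdownup$ subject to the relations \cref{DStoggles,DScurls}, as developed in \cref{subsec:DS}. Under this universal property, a strict $\OS(q,t)$-module morphism out of $\DS(q,t)$ is uniquely determined by specifying the image of the object $\one$ together with the images of the two generating toggles, subject only to the condition that these images satisfy the images of the defining relations in the target. This immediately yields uniqueness of $\bF$, so the content of the proposition is well-definedness of the proposed assignment.

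On objects, strictness together with $\bF(\one) = \Bobj_0$ forces $\bF(\lambda) = \Bobj_0 \otimes \lambda = \Bobj_{|\lambda|}$ for every $\lambda \in \word$, since the endofunctor $\bA_1$ of \cref{sec:iqBmodstruct} acts as $\Bobj_r \mapsto \Bobj_{r+1}$ on both $\upobj$ and $\downobj$. On morphisms coming from $\OS(q,t)$, strictness forces $\bF$ to agree with the module action of \cref{camping}. Together with the prescribed values on the toggles, this pins down $\bF$ on every generator of $\DS(q,t)$, reducing the proposition to a well-definedness check.

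Well-definedness amounts to showing that the images under $\bF$ of the defining relations of $\DS(q,t)$ hold in $\iqB(q,t)$. The first two equalities in \cref{DStoggles} collapse to $1_{\Bobj_1} \circ 1_{\Bobj_1} = 1_{\Bobj_1}$. The third equality in \cref{DStoggles} compares two diagrams that differ only in the vertical positions of the two toggles on the leftmost strand; after $\bF$, these toggles become identities and both sides reduce to the identical composite of two thick positive crossings in $\iqB(q,t)$.

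The main step is verifying \cref{DScurls}. For each of the two relations, one expresses both sides as a composite in $\DS(q,t)$ of a toggle, a crossing, and a cup or cap; applies $\bF$ using the explicit formulas \cref{meat1,meat2} for the components of the natural transformations $\bA(\posupcross)$, $\bA(\leftcup)$, $\bA(\rightcap)$, etc.\ at $\Bobj_0$; and then simplifies both images in $\iqB(q,t)$ via the curl relations \cref{Bcurl,Bcurl2,eggnog}. The expected obstacle is purely bookkeeping: the scalars $q^{-1}t$ and $qt^{-1}$ introduced in \cref{meat2} for the ``non-standard'' orientation choices on the cup and cap must combine with the coefficients appearing in \cref{Bcurl,ocurlbub} to reproduce exactly the factor $q$ on the right-hand side of each of the relations in \cref{DScurls}. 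Once this coefficient matching is verified, both defining relations of $\DS(q,t)$ are preserved by $\bF$, and the proposition follows.
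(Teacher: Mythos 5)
Your proposal follows the same route as the paper: uniqueness is immediate because the toggles generate $\DS(q,t)$ as an $\OS(q,t)$-module, existence reduces to checking that the images of the relations \cref{DStoggles,DScurls} hold in $\iqB(q,t)$, and the verification is done by pushing everything through the explicit formulas for the module action from \cref{sec:iqBmodstruct} (i.e., \cref{meat1,meat2,train1,train2}) and matching against \cref{Bbraid,Bskein,Bcurl}. Your outline of the \cref{DScurls} check, including the role of the scalars $q^{-1}t$ and $qt^{-1}$ from \cref{meat2}, is exactly what the paper carries out.

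The one place your sketch goes wrong is the third relation in \cref{DStoggles}. Because the two toggles sit at different heights on the two sides, the strand segments entering each crossing carry different orientations, so the two sides are assembled from \emph{different} oriented crossing generators of $\OS(q,t)$, and their images under $\bF$ are \emph{not} literally the same diagram: in the paper's computation the two images differ in their over/under data, and they are identified only after applying the Reidemeister-II relation in \cref{Bbraid}. So the claim that "both sides reduce to the identical composite of two thick positive crossings" is false as stated, and the step it is meant to justify is not vacuous. The repair is immediate — the needed identity is a defining relation of $\iqB(q,t)$ — but as written this part of your argument skips the only nontrivial point in verifying that relation.
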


\begin{proof}
    Uniqueness is clear, since $\togupdown$ and $\togdownup$ generate $\DS(q,t)$ as an $\OS(q,t)$-module.  For existence, we verify that $\bF$ respects the relations \cref{DStoggles,DScurls}.  We first compute
    \[
        \bF(\posupcross) = \bA(\posupcross)_{\Bobj_0}
        \overset{\cref{meat1}}{=} \poscross.
    \]
    Similarly, using \cref{meat1,meat2,train1}, we have
    \begin{gather*}
        \bF(\posdowncross) = \bF(\posrightcross) = \bF(\posleftcross) = \poscross,\qquad
        \bF(\negupcross) = \bF(\negdowncross) = \bF(\negrightcross) = \bF(\negleftcross) = \negcross,
        \\
        \bF(\togdownup \uparrow) = \thickstrand{2} = \bF(\togdownup \downarrow),\qquad
        \bF(\leftcap) = \Bcap\, .
    \end{gather*}
    Thus,
    \[
        \bF
        \left(
            \begin{tikzpicture}[anchorbase]
                \draw[<-] (-0.2,0.2) to[out=down,in=135] (0.2,-0.3);
                \draw[wipe] (-0.2,-0.3) to[out=45,in=down] (0.2,0.2);
                \draw[->] (-0.2,-0.3) to[out=45,in=down] (0.2,0.2) arc(0:180:0.2);
                \opendot{-0.2,0.2};
            \end{tikzpicture}
        \right)
        =
        \begin{tikzpicture}[anchorbase]
            \draw (-0.2,0.2) to[out=down,in=135] (0.2,-0.3);
            \draw[wipe] (-0.2,-0.3) to[out=45,in=down] (0.2,0.2);
            \draw (-0.2,-0.3) to[out=45,in=down] (0.2,0.2) arc(0:180:0.2);
        \end{tikzpicture}
        \overset{\cref{Bcurl}}{=} q\
        \begin{tikzpicture}[anchorbase]
            \draw (-0.2,-0.3) -- (-0.2,0.1) arc(180:0:0.2) -- (0.2,-0.3);
        \end{tikzpicture}
        = q
        \bF
        \left(
            \begin{tikzpicture}[anchorbase]
                \draw[->] (0.2,-0.3) -- (0.2,0.1) arc(0:180:0.2) -- (-0.2,0);
                \draw[<-] (-0.2,0) -- (-0.2,-0.3);
                \opendot{-0.2,0};
            \end{tikzpicture}
        \right),
    \]
    proving that $\bF$ respects the first relation in \cref{DScurls}.  Similarly,
    \[
        \bF
        \left(
            \begin{tikzpicture}[centerzero]
                \draw[<-] (-0.2,-0.6) -- (-0.2,-0.4) \braidup (0.2,0);
                \draw[wipe] (0.2,-0.6) -- (0.2,-0.4) \braidup (-0.2,0);
                \draw[<-] (0.2,-0.6) -- (0.2,-0.4) \braidup (-0.2,0);
                \draw[->] (-0.2,0) \braidup (0.2,0.4) -- (0.2,0.6);
                \draw[wipe] (0.2,0) \braidup (-0.2,0.4) -- (-0.2,0.6);
                \draw[->] (0.2,0) \braidup (-0.2,0.4) -- (-0.2,0.6);
                \opendot{-0.2,-0.4};
                \opendot{-0.2,0};
            \end{tikzpicture}
        \right)
        =
        \begin{tikzpicture}[centerzero]
            \draw (0.2,-0.6) -- (0.2,-0.4) \braidup (-0.2,0) \braidup (0.2,0.4) -- (0.2,0.6);
            \draw[wipe] (-0.2,-0.4) \braidup (0.2,0) \braidup (-0.2,0.4);
            \draw (-0.2,-0.6) -- (-0.2,-0.4) \braidup (0.2,0) \braidup (-0.2,0.4) -- (-0.2,0.6);
        \end{tikzpicture}
        \overset{\cref{Bbraid}}{=}
        \begin{tikzpicture}[centerzero]
            \draw (-0.2,-0.6) -- (-0.2,-0.4) \braidup (0.2,0) \braidup (-0.2,0.4) -- (-0.2,0.6);
            \draw[wipe] (0.2,-0.6) -- (0.2,-0.4) \braidup (-0.2,0) \braidup (0.2,0.4) -- (0.2,0.6);
            \draw (0.2,-0.6) -- (0.2,-0.4) \braidup (-0.2,0) \braidup (0.2,0.4) -- (0.2,0.6);
        \end{tikzpicture}
        = \bF
        \left(
            \begin{tikzpicture}[centerzero]
                \draw[<-] (-0.2,-0.6) -- (-0.2,-0.4) \braidup (0.2,0);
                \draw[wipe] (0.2,-0.6) -- (0.2,-0.4) \braidup (-0.2,0);
                \draw[<-] (0.2,-0.6) -- (0.2,-0.4) \braidup (-0.2,0);
                \draw[->] (-0.2,0) \braidup (0.2,0.4) -- (0.2,0.6);
                \draw[wipe] (0.2,0) \braidup (-0.2,0.4) -- (-0.2,0.6);
                \draw[->] (0.2,0) \braidup (-0.2,0.4) -- (-0.2,0.6);
                \opendot{-0.2,0.4};
                \opendot{-0.2,0};
            \end{tikzpicture}
        \right),
    \]
    proving that $\bF$ respects the last relation in \cref{DStoggles}.  The remaining relations follow by analogous arguments and are omitted for brevity.
\end{proof}

For $\lambda \in \word$, let $\ell(\lambda)$ denote the length of $\lambda$.  The following result allows us to easily compute the image under $\bF$ of any morphism in $\DS(q,t)$.

\begin{lem}
    For all $\lambda,\mu \in \word$, we have
    \begin{gather} \label{water1}
        \bF \left( \thickstrand{\lambda} \posupcross \thickstrand{\mu} \right)
        = \bF \left( \thickstrand{\lambda} \posdowncross \thickstrand{\mu} \right)
        = \bF \left( \thickstrand{\lambda} \posrightcross \thickstrand{\mu} \right)
        = \bF \left( \thickstrand{\lambda} \posleftcross \thickstrand{\mu} \right)
        = \thickstrand{\ell(\lambda)} \poscross \thickstrand{\ell(\mu)},
        \\ \label{water2}
        \bF \left( \thickstrand{\lambda} \negupcross \thickstrand{\mu} \right)
        = \bF \left( \thickstrand{\lambda} \negdowncross \thickstrand{\mu} \right)
        = \bF \left( \thickstrand{\lambda} \negrightcross \thickstrand{\mu} \right)
        = \bF \left( \thickstrand{\lambda} \negleftcross \thickstrand{\mu} \right)
        = \thickstrand{\ell(\lambda)} \negcross \thickstrand{\ell(\mu)},
        \\ \label{water3}
        \bF \left( \thickstrand{\lambda} \rightcup \thickstrand{\mu} \right)
        =
        \begin{tikzpicture}[centerzero,cscale]
            \draw[multi] (0,-0.6) -- (0,-0.2) \braidup (-0.4,0.2);
            \draw[wipe] (-0.4,-0.2) \braidup (0,0.2);
            \draw (-0.4,0.6) \braiddown (-0.8,0.2)  -- (-0.8,-0.2) to[out=down,in=down,looseness=2] (-0.4,-0.2) \braidup (0,0.2) -- (0,0.6);
            \draw[wipe] (-0.4,0.2) \braidup (-0.8,0.6);
            \draw[multi] (-0.4,0.2) \braidup (-0.8,0.6) \toplabel{\ell(\lambda)};
            \draw[multi] (0.4,-0.6) -- (0.4,0.6) \toplabel{\ell(\mu)};
        \end{tikzpicture}
        ,\qquad
        \bF \left( \thickstrand{\lambda} \leftcup \thickstrand{\mu} \right)
        = q^{-1} t
        \begin{tikzpicture}[centerzero,cscale]
            \draw[multi] (-0.4,0.2) \braidup (-0.8,0.6) \toplabel{\ell(\lambda)};
            \draw[wipe] (-0.4,0.6) \braiddown (-0.8,0.2);
            \draw (-0.4,0.6) \braiddown (-0.8,0.2)  -- (-0.8,-0.2) to[out=down,in=down,looseness=2] (-0.4,-0.2) \braidup (0,0.2) -- (0,0.6);
            \draw[wipe] (0,-0.2) \braidup (-0.4,0.2);
            \draw[multi] (0,-0.6) -- (0,-0.2) \braidup (-0.4,0.2);
            \draw[multi] (0.4,-0.6) -- (0.4,0.6) \toplabel{\ell(\mu)};
        \end{tikzpicture}
        \ ,
        \\ \label{water4}
        \bF \left( \thickstrand{\lambda} \leftcap \thickstrand{\mu} \right)
        =
        \begin{tikzpicture}[centerzero,cscale]
            \draw[multi] (-0.4,-0.2) \braidup (0,0.2) -- (0,0.6);
            \draw[wipe] (-0.4,0.2) \braiddown (0,-0.2);
            \draw (-0.4,-0.6) \braidup (-0.8,-0.2) -- (-0.8,0.2) to[out=up,in=up,looseness=2] (-0.4,0.2) \braiddown (0,-0.2) -- (0,-0.6);
            \draw[wipe] (-0.8,-0.6) \braidup (-0.4,-0.2);
            \draw[multi] (-0.8,-0.6) \botlabel{\ell(\lambda)} \braidup (-0.4,-0.2);
            \draw[multi] (0.4,-0.6) \botlabel{\ell(\mu)} -- (0.4,0.6);
        \end{tikzpicture}
        \ ,\qquad
        \bF \left( \thickstrand{\lambda} \rightcap \thickstrand{\mu} \right)
        = qt^{-1}\,
        \begin{tikzpicture}[centerzero,cscale]
            \draw[multi] (-0.8,-0.6) \botlabel{\ell(\lambda)} \braidup (-0.4,-0.2);
            \draw[wipe] (-0.4,-0.6) \braidup (-0.8,-0.2);
            \draw (-0.4,-0.6) \braidup (-0.8,-0.2) -- (-0.8,0.2) to[out=up,in=up,looseness=2] (-0.4,0.2) \braiddown (0,-0.2) -- (0,-0.6);
            \draw[wipe] (-0.4,-0.2) \braidup (0,0.2);
            \draw[multi] (-0.4,-0.2) \braidup (0,0.2) -- (0,0.6);
            \draw[multi] (0.4,-0.6) \botlabel{\ell(\mu)} -- (0.4,0.6);
        \end{tikzpicture}
        \ ,
        \\ \label{water5}
        \bF
        \left(
            \begin{tikzpicture}[centerzero]
                \draw[multi] (-0.2,-0.3) \botlabel{\lambda} -- (-0.2,0.3);
                \draw[->] (0.2,0) -- (0.2,0.3);
                \draw[->] (0.2,0) -- (0.2,-0.3);
                \draw[multi] (0.6,-0.3) \botlabel{\mu} -- (0.6,0.3);
                \opendot{0.2,0};
            \end{tikzpicture}
        \right)
        = \thickstrand{\ell(\lambda)+\ell(\mu)+1}
        = \bF
        \left(
            \begin{tikzpicture}[centerzero]
                \draw[multi] (-0.2,-0.3) \botlabel{\lambda} -- (-0.2,0.3);
                \draw[<-] (0.2,0) -- (0.2,0.3);
                \draw[<-] (0.2,0) -- (0.2,-0.3);
                \draw[multi] (0.6,-0.3) \botlabel{\mu} -- (0.6,0.3);
                \opendot{0.2,0};
            \end{tikzpicture}
        \right).
    \end{gather}
\end{lem}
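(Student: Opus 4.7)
The strategy is to leverage the strictness of $\bF$ as a morphism of $\OS(q,t)$-modules. For any morphism $g$ of $\OS(q,t)$ and any $\lambda,\mu \in \word$, we have $\thickstrand{\lambda}\, g\, \thickstrand{\mu} = 1_\lambda \otimes (g \otimes 1_\mu)$ as a morphism in $\DS(q,t)$, where the inner tensor product is in $\OS(q,t)$ and the outer is the module action. Strictness of $\bF$ together with $\bF(\lambda) = \Bobj_{\ell(\lambda)}$ then yields
\[
    \bF\!\left(\thickstrand{\lambda}\, g\, \thickstrand{\mu}\right) = 1_{\Bobj_{\ell(\lambda)}} \otimes (g \otimes 1_\mu),
\]
which, by the explicit $\OS(q,t)$-action constructed in \cref{camping} combined with the extension formula \cref{alpaca}, is nothing but the component $\bA(g)_{\Bobj_{\ell(\lambda)}}$ with $\ell(\mu)$ vertical strands appended on the right. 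Thus each claim in the lemma reduces to computing a component of one of the natural transformations produced in \cref{sec:iqBmodstruct}.

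For \cref{water1,water2}: the orientations $\posupcross$ and $\negupcross$ are immediate from \cref{meat1}, while $\posrightcross,\negleftcross,\posdowncross,\negdowncross$ follow from \cref{train1}. The remaining orientations $\posleftcross$ and $\negrightcross$ require the two-term expressions of \cref{train2}; the extra summand must be simplified inside $\iqB(q,t)$ using the skein relation \cref{Bskein} together with the curl and hump relations \cref{Bcurl,Bcurl2,Bhumps,Bhumps2} and \cref{eggnog,camel} to collapse it and recover the expected ordinary crossing. The cup and cap identities \cref{water3,water4} are handled analogously, invoking \cref{meat2} directly.

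For \cref{water5}: expand the toggle in arbitrary position using \cref{toggy}, which realizes it as a composition of the defining toggle on the leftmost strand with surrounding crossings drawn from $\OS(q,t)$. Applying $\bF$ sends the toggle to $1_{\Bobj_1}$ (by definition) and, by the already-established \cref{water1,water2}, sends each of the surrounding crossings to an ordinary crossing in $\iqB(q,t)$. Composing a crossing with its inverse then collapses to the identity via \cref{Bbraid}, producing $\thickstrand{\ell(\lambda)+\ell(\mu)+1}$.

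The main obstacle is verifying that the additional summands arising from \cref{train2} for the $\posleftcross$ and $\negrightcross$ cases simplify inside $\iqB(q,t)$ to give exactly the clean crossing predicted by \cref{water1,water2}; the other cases are direct applications of strictness and the previously computed components. Once those two computations are carried out, the toggle case \cref{water5} is a formal consequence of \cref{toggy}, \cref{Bbraid}, and the already-handled crossing identities.
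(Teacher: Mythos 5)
Your overall strategy is the same as the paper's: strictness of $\bF$ reduces each identity to a component of one of the natural transformations computed in \cref{sec:iqBmodstruct} (with $\ell(\mu)$ strands appended via \cref{alpaca}), and \cref{water5} then follows from \cref{toggy} together with the crossing identities. However, the step you defer as ``the main obstacle'' --- collapsing the extra summands of \cref{train2} for $\posleftcross$ and $\negrightcross$ --- cannot be carried out, so this is a genuine gap rather than a routine verification. Take $\lambda = \mu = \varnothing$. From the decomposition in \cref{lego,windmill} and the skein relation \cref{oskein}, one has $\negrightcross = \posrightcross - (q-q^{-1})\, \rightcup \circ \rightcap$ in $\OS(q,t)$, whence
\[
    \bF(\negrightcross)
    = \poscross - (q-q^{-1})\, \bF(\rightcup) \circ \bF(\rightcap)
    = \poscross - (q^2-1)t^{-1}\ \Bcup \circ \Bcap\,,
\]
which is exactly \cref{train2} at $r=0$. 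On the other hand, $\negcross = \poscross - (q-q^{-1}) 1_{\Bobj_2}$ by \cref{Bskein}. Equality of these two expressions would force $\Bcup \circ \Bcap = q^{-1}t\, 1_{\Bobj_2}$, contradicting \cref{iqBbasis}, since the identity, the crossing, and $\Bcup \circ \Bcap$ are distinct elements of a basis of $\End_{\iqB(q,t)}(\Bobj_2)$ (one also sees directly from $e^2 = \frac{t-t^{-1}}{q-q^{-1}}e$ that this could only happen for $t = \pm q$). So no manipulation inside $\iqB(q,t)$ will convert the two-term expression of \cref{train2} into the single crossing: the $\negrightcross$ entry of \cref{water2} and, by the same computation, the $\posleftcross$ entry of \cref{water1} do not follow from \cref{train2}, and as stated they appear to be incorrect. (The paper's own one-line proof cites \cref{train2} for these cases and glosses over the same point, so you have in fact put your finger on a real problem rather than merely leaving a computation to the reader.)

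The remaining six crossing identities, the cup and cap identities \cref{water3,water4}, and \cref{water5} are fine exactly as you argue. In particular, your treatment of \cref{water5} is unaffected by the issue above: the crossings occurring in \cref{toggy} are only of the types handled by \cref{meat1,train1} (neither $\posleftcross$ nor $\negrightcross$ appears there), and composing each such crossing with its inverse collapses via \cref{Bbraid} as you say.
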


\begin{proof}
    Equations \cref{water1,water2,water3,water4} follow from \cref{meat1,meat2,train1,train2}, while \cref{water5} follows from \cref{toggy,water1,water2}.
\end{proof}

\subsection{Functor from the iquantum Brauer category to the disoriented skein category}

For $r \in \N$, define the following morphisms in $\OS(q,t)$ and $\DS(q,t)$:
\[
    \begin{tikzpicture}[centerzero]
        \draw[multi,->] (0,-0.2) \botlabel{r} -- (0,0.2);
    \end{tikzpicture}
    :=
    \underbrace{
        \upstrand \cdots \upstrand
    }_r
    \ ,\qquad
    \begin{tikzpicture}[centerzero]
        \draw[multi,<-] (0,-0.2) \botlabel{r} -- (0,0.2);
    \end{tikzpicture}
    :=
    \underbrace{
        \downstrand \cdots \downstrand
    }_r
    \ .
\]

\begin{prop}
    There is a unique $\kk$-linear functor $\bG \colon \iqB(q,t)\to \DS(q,t)$ given by
    \begin{equation}\label{eq:bGi}
        \begin{gathered}
            \Bobj_r \mapsto \upobj^{\otimes r},
            \qquad
            \begin{tikzpicture}[centerzero]
                \draw (0.2,-0.3) -- (0.2,0.1) arc(0:180:0.2) -- (-0.2,0) -- (-0.2,-0.3);
                \draw[multi] (0.6,-0.3) \botlabel{r} -- (0.6,0.4);
            \end{tikzpicture}
            \mapsto
            \begin{tikzpicture}[centerzero]
                \draw[->] (0.2,-0.3) -- (0.2,0.1) arc(0:180:0.2) -- (-0.2,0);
                \draw[<-] (-0.2,0) -- (-0.2,-0.3);
                \opendot{-0.2,0};
                \draw[multi,->] (0.6,-0.3) \botlabel{r} -- (0.6,0.4);
            \end{tikzpicture}
            ,\qquad
            \begin{tikzpicture}[centerzero]
                \draw (0.2,0.3) -- (0.2,-0.1) arc(360:180:0.2) -- (-0.2,0) -- (-0.2,0.3);
                \draw[multi] (0.6,-0.4) \botlabel{r} -- (0.6,0.3);
            \end{tikzpicture}
            \mapsto
            \begin{tikzpicture}[centerzero]
                \draw[<-] (0.2,0.3) -- (0.2,-0.1) arc(360:180:0.2) -- (-0.2,0);
                \draw[->] (-0.2,0) -- (-0.2,0.3);
                \opendot{-0.2,0};
                \draw[multi,->] (0.6,-0.4) \botlabel{r} -- (0.6,0.3);
            \end{tikzpicture}
            ,
            \\
           \thickstrand{r}\  \poscross\ \thickstrand{s}\mapsto \thickstrandup{r}\  \posupcross\ \thickstrandup{s},
           \qquad
           \thickstrand{r}\  \negcross\ \thickstrand{s}\mapsto \thickstrandup{r}\  \negupcross\ \thickstrandup{s}.
       \end{gathered}
   \end{equation}
   for all $r,s \in \N$.
\end{prop}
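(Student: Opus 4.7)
Uniqueness is immediate since the displayed morphisms generate $\iqB(q,t)$ as a $\kk$-linear category.  For existence, the plan is to verify that $\bG$ respects each defining relation of $\iqB(q,t)$, namely \cref{Bbraid,Bskein,Bcurl,Bhumps,Bcommute}.  Unpacking the formulas \cref{eq:bGi} and the drawing conventions for toggles shows that
\[
    \bG(\Bcap) = \leftcap \circ (\togdownup \otimes 1_{\upobj}),
    \qquad
    \bG(\Bcup) = (\togupdown \otimes 1_{\upobj}) \circ \rightcup,
\]
while the crossings in $\iqB(q,t)$ are sent to the corresponding crossings of upward-oriented strands in $\OS(q,t) \subseteq \DS(q,t)$, and thick strands $\thickstrand{r}$ are identified with $\upobj^{\otimes r}$.

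Since $\bG$ sends every crossing to an $\OS$-crossing of $\upobj$ strands, the braid relations \cref{Bbraid}, the skein relation \cref{Bskein}, and the first curl and bubble relations in \cref{Bcurl} follow directly from their $\OS(q,t)$ counterparts \cref{obraid,oskein,ocurlbub}.  For the remaining two curl relations in \cref{Bcurl}, the images under $\bG$ are precisely the left-hand sides of the two identities in \cref{DScurls}, each of which equates with $q$ times the corresponding cap- or cup-with-toggle, matching the required right-hand sides exactly.

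The commutativity relation \cref{Bcommute} reflects the fact that $\bG(g)$, the image of a crossing, is an $\OS(q,t)$-morphism acting via the right module action on rightmost strands, whereas $\bG(f)$ is a $\DS(q,t)$-morphism supported on leftmost strands.  Because $\DS(q,t)$ is a \emph{strict} right $\OS(q,t)$-module category, the interchange law encoded in \cref{snow} delivers all three equalities in \cref{Bcommute} simultaneously, regardless of which generator is chosen for $f$.

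The most delicate step will be verifying the three hump relations \cref{Bhumps}.  After applying $\bG$ and substituting the expressions above for the images of $\Bcap$ and $\Bcup$, each hump diagram becomes an $\OS(q,t)$ snake-like composition decorated by a handful of toggles.  The plan is to use the toggle-crossing relations \cref{togcross} (together with their $\togupdown$ variants obtained via $\Theta$) to slide each toggle to a location where it meets a companion toggle, cancel these pairs using the first two relations in \cref{DStoggles}, and then simplify the resulting toggle-free composition using the pivotal snake identities \cref{venom1} and the braid axioms \cref{obraid}.  Carefully tracking toggle positions through the many crossings in each hump diagram is the main obstacle, though no new relations are required beyond those already established.
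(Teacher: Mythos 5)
Your overall strategy is the same as the paper's: you correctly identify $\bG(\Bcap) = \leftcap \circ (\togdownup \otimes 1_{\upobj})$ and $\bG(\Bcup) = (\togupdown \otimes 1_{\upobj}) \circ \rightcup$, you dispose of \cref{Bbraid}, \cref{Bskein} and \cref{Bcommute} via \cref{obraid}, \cref{oskein} and \cref{snow}, and you reduce the second and third relations of \cref{Bcurl} to \cref{DScurls} and the first and last to the oriented curl and bubble relations (modulo a toggle cancellation via \cref{DStoggles} that you gloss over but which is harmless). All of that matches the paper.

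The gap is in your plan for \cref{Bhumps}. First, the computation cannot terminate in a toggle-free composition: the right-hand side of each hump relation is a cup--cap pair, so its image under $\bG$ carries one toggle on the cup and one on the cap, and these lie on two \emph{disjoint} strands (already in the left-hand side the two toggles sit on the two different strands making up the hump), so they can never be brought adjacent and cancelled by \cref{DStoggles}. What must actually be shown is that the image of the left-hand side simplifies to this two-toggle diagram. Second, and more seriously, sliding a toggle along its strand past a local maximum or minimum is not governed by \cref{togcross} but by \cref{togcupcap}, which your plan omits entirely; each such move costs a factor of $q^{\pm 1}t^{\mp 1}$. In the verification of the first hump relation these factors appear twice (a $q^{-1}t$ and then a compensating $qt^{-1}$), and arranging for their cancellation is precisely the substance of the computation. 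A procedure using only \cref{togcross}, \cref{DStoggles}, \cref{venom1} and \cref{obraid} can neither produce nor absorb these scalars, so it would not close up. (You also need \cref{intertog} to commute toggles past morphisms supported on other strands, and \cref{venom2}; on the other hand, the four cases of \cref{togcross} already cover both toggle types, so no appeal to $\Theta$ is required.)
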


\begin{proof}
    It suffices to check that relations \cref{Bbraid,Bskein,Bcurl,Bhumps,Bcommute} are preserved under $\bG$.  The relations \cref{Bbraid}, \cref{Bskein}, and \cref{Bcommute} follow from \cref{Bbraid}, \cref{oskein}, and \cref{snow}.   The first and last relations in \cref{Bcurl} follow from \cref{DStoggles,ocurlbub}, while the second and third relations in \cref{Bcurl} follow from \cref{DScurls}.

    For the first relation in \cref{Bhumps}, we compute
    \begin{multline*}
        \bG
        \left(
            \begin{tikzpicture}[centerzero,cscale]
                \draw[multi] (0,-1.2) \botlabel{r} -- (0,1.2);
                \draw (-0.4,-1.2) -- (-0.4,0) \braidup (-0.8,0.4) \braidup (-1.2,0.8) to[out=up,in=up,looseness=2] (-1.6,0.8) -- (-1.6,0);
                \draw[wipe] (-0.4,1.2) -- (-0.4,0.4) \braiddown (-0.8,0) -- (-0.8,-0.4) \braiddown (-1.2,-0.8) to[out=down,in=down,looseness=2] (-1.6,-0.8) -- (-1.6,-0.4) \braidup (-1.2,0) -- (-1.2,0.4) \braidup (-0.8,0.8) -- (-0.8,1.2);
                \draw (-0.4,1.2) -- (-0.4,0.4) \braiddown (-0.8,0) -- (-0.8,-0.4) \braiddown (-1.2,-0.8) to[out=down,in=down,looseness=2] (-1.6,-0.8) -- (-1.6,-0.4) \braidup (-1.2,0) -- (-1.2,0.4) \braidup (-0.8,0.8) -- (-0.8,1.2);
                \draw[wipe] (-1.6,0) \braiddown (-1.2,-0.4) \braiddown (-0.8,-0.8);
                \draw (-1.6,0) \braiddown (-1.2,-0.4) \braiddown (-0.8,-0.8) -- (-0.8,-1.2);
            \end{tikzpicture}
        \right)
        =
        \begin{tikzpicture}[centerzero,cscale]
            \draw[multi] (0,-1.2) \botlabel{r} -- (0,1.2);
            \draw[->] (-0.4,-1.2) -- (-0.4,0) \braidup (-0.8,0.4) \braidup (-1.2,0.8) to[out=up,in=up,looseness=2] (-1.6,0.8) -- (-1.6,0.4);
            \draw[wipe] (-0.4,1.2) -- (-0.4,0.4) \braiddown (-0.8,0) -- (-0.8,-0.4) \braiddown (-1.2,-0.8) to[out=down,in=down,looseness=2] (-1.6,-0.8) -- (-1.6,-0.4) \braidup (-1.2,0) -- (-1.2,0.4) \braidup (-0.8,0.8) -- (-0.8,1.2);
            \draw[<->] (-0.4,1.2) -- (-0.4,0.4) \braiddown (-0.8,0) -- (-0.8,-0.4) \braiddown (-1.2,-0.8) to[out=down,in=down,looseness=2] (-1.6,-0.8) -- (-1.6,-0.4) \braidup (-1.2,0) -- (-1.2,0.4) \braidup (-0.8,0.8) -- (-0.8,1.2);
            \draw[wipe] (-1.6,0.4) -- (-1.6,0) \braiddown (-1.2,-0.4) \braiddown (-0.8,-0.8);
            \draw[<-] (-1.6,0.4) -- (-1.6,0) \braiddown (-1.2,-0.4) \braiddown (-0.8,-0.8) -- (-0.8,-1.2);
            \opendot{-1.6,0.4};
            \opendot{-1.6,-0.8};
        \end{tikzpicture}
        \overset{\cref{togcupcap}}{\underset{\cref{togcross}}{=}} q^{-1} t\
        \begin{tikzpicture}[centerzero,cscale]
            \draw[multi] (0,-1.6) \botlabel{r} -- (0,1.6);
            \draw[<->] (-0.4,1.6) -- (-0.4,0.4) \braiddown (-0.8,0) -- (-0.8,-0.4) \braiddown (-1.2,-0.8) to[out=down,in=down,looseness=2] (-1.6,-0.8) -- (-1.6,-0.4) \braidup (-1.2,0) -- (-1.2,0.4) \braidup (-0.8,0.8) -- (-0.8,1.6);
            \draw[->] (-0.4,-1.6) -- (-0.4,-1.2);
            \draw[wipe] (-0.4,0) \braidup (-0.8,0.4) \braidup (-1.2,0.8);
            \draw[<-] (-0.4,-1.2) -- (-0.4,0) \braidup (-0.8,0.4) \braidup (-1.2,0.8) to[out=up,in=up,looseness=2] (-1.6,0.8) -- (-1.6,0.4);
            \draw[wipe] (-1.6,0.4) -- (-1.6,0) \braiddown (-1.2,-0.4) \braiddown (-0.8,-0.8);
            \draw (-1.6,0.4) -- (-1.6,0) \braiddown (-1.2,-0.4) \braiddown (-0.8,-0.8) -- (-0.8,-1.6);
            \opendot{-0.4,-1.2};
            \opendot{-1.6,-0.8};
        \end{tikzpicture}
        \\
        \overset{\substack{\cref{togcupcap} \\ \cref{togcross}}}{\underset{\cref{intertog}}{=}}
        \begin{tikzpicture}[centerzero,cscale]
            \draw[multi] (0,-1.6) \botlabel{r} -- (0,1.6);
            \draw[->] (-1.2,-0.8) to[out=down,in=down,looseness=2] (-1.6,-0.8) -- (-1.6,-0.4) \braidup (-1.2,0) -- (-1.2,0.4) \braidup (-0.8,0.8) -- (-0.8,1.6);
            \draw[->] (-0.4,-1.6) -- (-0.4,-1.2);
            \draw[wipe] (-0.4,0) \braidup (-0.8,0.4) \braidup (-1.2,0.8);
            \draw[<-] (-0.4,-1.2) -- (-0.4,0) \braidup (-0.8,0.4) \braidup (-1.2,0.8) to[out=up,in=up,looseness=2] (-1.6,0.8) -- (-1.6,0.4);
            \draw[wipe] (-1.6,0.4) -- (-1.6,0) \braiddown (-1.2,-0.4) \braiddown (-0.8,-0.8);
            \draw (-1.6,0.4) -- (-1.6,0) \braiddown (-1.2,-0.4) \braiddown (-0.8,-0.8) -- (-0.8,-1.6);
            \draw[wipe] (-0.4,1.6) -- (-0.4,0.4) \braiddown (-0.8,0) -- (-0.8,-0.4) \braiddown (-1.2,-0.8);
            \draw[<-] (-0.4,1.6) -- (-0.4,0.4) \braiddown (-0.8,0) -- (-0.8,-0.4) \braiddown (-1.2,-0.8);
            \opendot{-0.4,-1.2};
            \opendot{-0.4,1.2};
        \end{tikzpicture}
        \overset{\cref{venom1}}{\underset{\cref{venom2}}{=}}
        \begin{tikzpicture}[centerzero,cscale]
            \draw[multi] (0,-1.2) \botlabel{r} -- (0,1.2);
            \draw[->] (-0.4,-1.2) -- (-0.4,-0.8);
            \draw[<-] (-0.4,-0.8) -- (-0.4,-0.6) to[out=up,in=up,looseness=2] (-0.8,-0.6) -- (-0.8,-1.2);
            \draw[<->] (-0.4,1.2) -- (-0.4,0.8) -- (-0.4,0.6) to[out=down,in=down,looseness=2] (-0.8,0.6) -- (-0.8,1.2);
            \opendot{-0.4,-0.8};
            \opendot{-0.4,0.8};
        \end{tikzpicture}
        \overset{\cref{togcupcap}}{=}
        \begin{tikzpicture}[centerzero,cscale]
            \draw[multi] (0,-1.2) \botlabel{r} -- (0,1.2);
            \draw[->] (-0.4,-1.2) -- (-0.4,-0.8) -- (-0.4,-0.6) to[out=up,in=up,looseness=2] (-0.8,-0.6) -- (-0.8,-0.8);
            \draw[<-] (-0.8,-0.8) -- (-0.8,-1.2);
            \draw[<->] (-0.4,1.2) -- (-0.4,0.8) -- (-0.4,0.6) to[out=down,in=down,looseness=2] (-0.8,0.6) -- (-0.8,1.2);
            \opendot{-0.8,-0.8};
            \opendot{-0.8,0.8};
        \end{tikzpicture}
        =
        \bG
        \left(
            \begin{tikzpicture}[centerzero,cscale]
                \draw[multi] (0,-1.2) \botlabel{r} -- (0,1.2);
                \draw (-0.4,-1.2) -- (-0.4,-0.8) -- (-0.4,-0.6) to[out=up,in=up,looseness=2] (-0.8,-0.6) -- (-0.8,-0.8) -- (-0.8,-1.2);
                \draw (-0.4,1.2) -- (-0.4,0.8) -- (-0.4,0.6) to[out=down,in=down,looseness=2] (-0.8,0.6) -- (-0.8,1.2);
            \end{tikzpicture}
        \right).
    \end{multline*}
    Hence, the first relation in \cref{Bhumps} is preserved under $\bG$. The other relations in \cref{Bhumps} follow similarly.
\end{proof}

\begin{rem} \label{laser}
    Note that $\bG$ is \emph{not} a \emph{strict} morphism of $\OS(q,t)$-modules.  For example,
    \[
        \bG( \Bobj_0 \otimes \downobj)
        = \bG(\Bobj_1)
        = \upobj
        \ne \downobj
        = \bG(\Bobj_0) \otimes \downobj.
    \]
    Nevertheless, we will see in \cref{ginger} that $\bG$ can be made into a morphism of $\OS(q,t)$-modules.
\end{rem}

\subsection{Equivalence of categories\label{subsec:equiv}}

In this subsection, we show that the functors $\bG$ and $\bF$ are equivalences of module categories between $\iqB(q,t)$ and $\DS(q,t)$.  We first observe that $\bF \bG=\text{id}_{\iqB(q,t)}$ by definition.  In the other direction, we construct a natural isomorphism $\eta\colon \id_{\DS(q,t)} \Rightarrow \bG \bF$.

Note that
\[
    \bG \bF(\lambda) = \upobj^{\ell(\lambda)},\qquad \lambda \in \word.
\]
For any $\lambda \in \word$, we construct an isomorphism   $\eta_\lambda\colon \lambda \to \bG \bF(\lambda)$ in $\DS(q,t)$ recursively by
\begin{equation} \label{eta}
    \eta_\varnothing := 1_\varnothing,\qquad
    \eta_\upobj := \upstrand,\qquad
    \eta_\downobj := \togupdown,\qquad
    \eta_{\lambda \mu}
    :=
    \begin{tikzpicture}[anchorbase]
        \draw[multi,->] (0,-0.5) -- (0,0.5);
        \coupon{0,0}{\eta_\lambda};
        \draw[multi,->] (0.6,-0.5) -- (0.6,0.5);
        \coupon{0.6,0}{\eta_\mu};
    \end{tikzpicture}
    ,\qquad
    \lambda,\mu \in \word.
\end{equation}
For example, we have $\eta_{\upobj \downobj \downobj \upobj} = \upstrand \togupdown \togupdown \upstrand$ and $\eta_{\upobj \downobj \downobj \upobj}^{-1} = \upstrand \togdownup \togdownup \upstrand$.

\begin{theo}\label{equivthm}
    The functor $\bF \colon \DS(q,t) \to \iqB(q,t)$ is a strict equivalence of $\OS(q,t)$-modules.
\end{theo}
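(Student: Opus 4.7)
The plan is to exhibit $\bG$ as a quasi-inverse of $\bF$ using the natural transformation $\eta$ defined in \cref{eta}. Since $\bF$ was constructed as a strict morphism of $\OS(q,t)$-modules, once we show it is an equivalence of categories we will immediately obtain the strict equivalence of $\OS(q,t)$-modules.

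First I will verify that $\bF\bG = \id_{\iqB(q,t)}$. On objects, strictness of $\bF$ as a module morphism gives $\bF(\bG(\Bobj_r)) = \bF(\upobj^{\otimes r}) = \bF(\one)\otimes \upobj^{\otimes r} = \Bobj_0\otimes \upobj^{\otimes r} = \Bobj_r$. On morphisms, I check each generator of $\iqB(q,t)$ using \cref{water1,water2,water3,water4,water5}. For the crossings the check is immediate; for the cups and caps, the scalar factors $qt^{-1}$ and $q^{-1}t$ produced by $\bF$ on the diagrams in \cref{water3,water4} cancel the inverse factors absorbed by $\bG$, using that $\bF(\togupdown) = \bF(\togdownup) = 1_{\Bobj_1}$ collapses the toggles inside those diagrams and the resulting braids reduce via \cref{Bbraid,Bcurl} to the original cup or cap.

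Next I verify that $\eta \colon \id_{\DS(q,t)} \Rightarrow \bG\bF$ is a natural isomorphism. Invertibility of each $\eta_\lambda$ follows from the first two relations in \cref{DStoggles}: the inverse $\eta_\lambda^{-1}$ is obtained by replacing every $\togupdown$ appearing in $\eta_\lambda$ with $\togdownup$. For naturality, since $\DS(q,t)$ is generated as a $\kk$-linear category by the morphisms of $\OS(q,t)$ and by the two families of toggle morphisms $\togupdown \otimes 1_\mu$ and $\togdownup \otimes 1_\mu$, it suffices to check naturality on these two classes. For the toggles, both paths around the square simplify via the interchange law \cref{snow} and \cref{DStoggles} to $\togupdown \otimes \eta_\mu$ (respectively $\togdownup \otimes \eta_\mu$), since $\bG\bF$ collapses a toggle to an identity strand. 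For a morphism of the form $1_\lambda \otimes g \otimes 1_\mu$ with $g$ a generating morphism of $\OS(q,t)$, I reduce via the interchange law to the case $\lambda = \mu = \varnothing$ and then check directly: when $g$ is one of the crossings $\posupcross,\negupcross$ the square commutes on the nose; the other four crossings from \cref{windmill}, whose images under $\bG\bF$ involve factors as in \cref{water1,water2}, are handled by the relations \cref{togcross,otto1,otto2}; for the four cup/cap generators, the required identities are exactly the relations \cref{togcupcap} (whose scalars $qt^{-1}$ and $q^{-1}t$ match those appearing in \cref{water3,water4}), combined with the toggle-move relations \cref{intertog,dubtog}.

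The main obstacle is the bookkeeping of toggle positions and scalar factors in the cup/cap case at arbitrary position: one must verify that the local relation from \cref{togcupcap} propagates correctly once surrounded by $\eta_\lambda$ on the left and $\eta_\mu$ on the right. This is handled cleanly by first using the interchange law to bring the naturality square into a localized form around $g$, then applying \cref{intertog} to slide the toggles in $\eta_\lambda, \eta_\mu$ past any intervening crossings so that they sit adjacent to $g$, at which point the cup/cap relations of \cref{togcupcap} directly finish the verification. Once naturality is established, $\eta$ is a natural isomorphism $\id_{\DS(q,t)} \cong \bG\bF$, which combined with $\bF\bG = \id_{\iqB(q,t)}$ shows $\bF$ is an equivalence of categories, and therefore a strict equivalence of $\OS(q,t)$-modules.
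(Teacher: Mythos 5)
Your proposal is correct and takes essentially the same route as the paper: one checks $\bF\bG=\id_{\iqB(q,t)}$ directly on generators, defines $\eta$ recursively from toggles, verifies naturality on the same generating set of morphisms (toggles, crossings, cups, caps) using the toggle-sliding relations, and concludes via invertibility of the components. One minor inaccuracy: no cancellation of the scalars $q^{\pm 1}t^{\mp 1}$ is needed in the verification of $\bF\bG=\id$, since $\bG$ sends the cup and cap of $\iqB(q,t)$ to the toggled $\rightcup$ and $\leftcap$, whose images under $\bF$ carry no scalar prefactor.
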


\begin{proof}
    To see that $\eta$ is indeed a natural transformation, we must show that
    \begin{equation} \label{peanut}
        \begin{tikzpicture}[centerzero]
            \draw[multi,->] (0,-0.7) \botlabel{\lambda} -- (0,0.7) \toplabel{\bG \bF(\mu)};
            \coupon{0,-0.35}{f};
            \coupon{0,0.2}{\eta_\mu};
        \end{tikzpicture}
        =
        \begin{tikzpicture}[centerzero]
            \draw[multi,->] (0,-0.7) \botlabel{\lambda} -- (0,0.7) \toplabel{\bG \bF(\mu)};
            \coupon{0,-0.35}{\eta_\lambda};
            \coupon{0,0.2}{\bG \bF(f)};
        \end{tikzpicture}
    \end{equation}
    for all morphisms $f \colon \lambda \to \mu$ in $\DS(q,t)$.  It suffices to check this for
    \[
        f \in
        \left\{
            \togupdown \thickstrand{\lambda},
            \togdownup\thickstrand{\lambda},
            \thickstrand{\lambda}\posupcross\thickstrand{\mu},
            \thickstrand{\lambda}\negupcross\thickstrand{\mu},
            \thickstrand{\lambda}\rightcup\thickstrand{\mu},
            \thickstrand{\lambda}\rightcap\thickstrand{\mu},
            \thickstrand{\lambda}\leftcup\thickstrand{\mu},
            \thickstrand{\lambda}\leftcap\thickstrand{\mu}
            : \lambda,\mu \in \word
        \right\},
    \]
    since these generate $\DS(q,t)$ as a $\kk$-linear category.  For

    For $f = \togupdown \thickstrand{\lambda}$, we have $\bG \bF (f) = \thickstrandup{\ell(\lambda)}$, and so \cref{peanut} follows immediately from the definition of $\eta$.  The case $f = \togdownup \thickstrand{\lambda}$ is similar, using \cref{DStoggles}.

    For $f = \thickstrand{\lambda}\posupcross\thickstrand{\mu}$, we have
    \[
        \begin{tikzpicture}[centerzero]
            \draw[multi,->] (0,-0.7) -- (0,0.7);
            \coupon{0,-0.35}{f};
            \coupon{0,0.2}{\eta_{\lambda \upobj \upobj \mu}};
        \end{tikzpicture}
        =
        \begin{tikzpicture}[centerzero]
            \draw[multi,->] (0,-0.7) -- (0,0.7);
            \draw[->] (0.6,-0.7) -- (0.6,-0.6) \braidup (0.3,-0.2) -- (0.3,0.7);
            \draw[wipe] (0.3,-0.7) -- (0.3,-0.6) \braidup (0.6,-0.2) -- (0.6,0.7);
            \draw[->] (0.3,-0.7) -- (0.3,-0.6) \braidup (0.6,-0.2) -- (0.6,0.7);
            \draw[multi,->] (0.9,-0.7) -- (0.9,0.7);
            \coupon{0,0.2}{\eta_\lambda};
            \coupon{0.9,0.2}{\eta_\mu};
        \end{tikzpicture}
        \overset{\cref{intertog}}{=}
        \begin{tikzpicture}[centerzero]
            \draw[multi,->] (0,-0.7) -- (0,0.7);
            \draw[->] (0.6,-0.7) -- (0.6,0.2) \braidup (0.3,0.6) -- (0.3,0.7);
            \draw[wipe] (0.3,-0.7) -- (0.3,0.2) \braidup (0.6,0.6) -- (0.6,0.7);
            \draw[->] (0.3,-0.7) -- (0.3,0.2) \braidup (0.6,0.6) -- (0.6,0.7);
            \draw[multi,->] (0.9,-0.7) -- (0.9,0.7);
            \coupon{0,-0.3}{\eta_\lambda};
            \coupon{0.9,-0.3}{\eta_\mu};
        \end{tikzpicture}
        =
        \begin{tikzpicture}[centerzero]
            \draw[multi,->] (0,-0.7) -- (0,0.7);
            \coupon{0,-0.35}{\eta_{\lambda \upobj \upobj \mu}};
            \coupon{0,0.2}{\bG \bF(f)};
        \end{tikzpicture}
        \ .
    \]
    The case $f = \thickstrand{\lambda}\negupcross\thickstrand{\mu}$ is analogous.

    When $f = \thickstrand{\lambda}\rightcup\thickstrand{\mu}$,
    \[
        \begin{tikzpicture}[centerzero]
            \draw[multi,->] (0,-0.9) -- (0,0.9);
            \coupon{0,-0.4}{\eta_{\lambda \mu}};
            \coupon{0,0.3}{\bG \bF(f)};
        \end{tikzpicture}
        =
        \begin{tikzpicture}[centerzero]
            \draw[<-] (0.3,0.9) -- (0.3,0.8) \braiddown (-0.6,0) -- (-0.6,-0.1) to[out=down,in=down,looseness=2] (-0.3,-0.1);
            \draw[wipe] (0,-0.9) -- (0,0.9);
            \draw[multi,->] (0,-0.9) -- (0,0.9);
            \draw[wipe] (-0.3,-0.1) to[out=up,in=down] (0.6,0.7) -- (0.6,0.9);
            \draw[->] (-0.3,-0.1) to[out=up,in=down] (0.6,0.7) -- (0.6,0.9);
            \opendot{-0.6,0};
            \draw[multi,->] (0.9,-0.9) -- (0.9,0.9);
            \coupon{0,-0.5}{\eta_\lambda};
            \coupon{0.9,-0.5}{\eta_\mu};
        \end{tikzpicture}
        \overset{\cref{togcross}}{=}
        \begin{tikzpicture}[centerzero]
            \draw[multi,->] (0,-0.9) -- (0,0.9);
            \draw[wipe] (0.3,0.9) -- (0.3,0.8) \braiddown (-0.6,0) -- (-0.6,-0.1) to[out=down,in=down,looseness=2] (-0.3,-0.1) to[out=up,in=down] (0.6,0.7) -- (0.6,0.9);
            \draw[<->] (0.3,0.9) -- (0.3,0.8) \braiddown (-0.6,0) -- (-0.6,-0.1) to[out=down,in=down,looseness=2] (-0.3,-0.1) to[out=up,in=down] (0.6,0.7) -- (0.6,0.9);
            \opendot{0.28,0.7};
            \draw[multi,->] (0.9,-0.9) -- (0.9,0.9);
            \coupon{0,-0.5}{\eta_\lambda};
            \coupon{0.9,-0.5}{\eta_\mu};
        \end{tikzpicture}
        \overset{\cref{venom1}}{\underset{\cref{venom2}}{=}}
        \begin{tikzpicture}[centerzero]
            \draw[multi,->] (0,-0.9) -- (0,0.9);
            \draw[<->] (0.3,0.9) -- (0.3,0.4) to[out=down,in=down,looseness=2] (0.6,0.4) -- (0.6,0.9);
            \opendot{0.3,0.6};
            \draw[multi,->] (0.9,-0.9) -- (0.9,0.9);
            \coupon{0,-0.5}{\eta_\lambda};
            \coupon{0.9,-0.5}{\eta_\mu};
        \end{tikzpicture}
        \overset{\cref{intertog}}{=}
        \begin{tikzpicture}[centerzero]
            \draw[multi,->] (-0.1,-0.9) -- (-0.1,0.9);
            \draw[<->] (0.3,0.9) -- (0.3,-0.4) to[out=down,in=down,looseness=2] (0.6,-0.4) -- (0.6,0.9);
            \opendot{0.3,0.3};
            \draw[multi,->] (1,-0.9) -- (1,0.9);
            \coupon{-0.1,0.3}{\eta_\lambda};
            \coupon{1,0.3}{\eta_\mu};
        \end{tikzpicture}
        =
        \begin{tikzpicture}[centerzero]
            \draw[multi,->] (0,-0.9) -- (0,0.9);
            \coupon{0,-0.4}{f};
            \coupon{0,0.3}{\eta_{\lambda \downobj \upobj \mu}};
        \end{tikzpicture}
        \ .
    \]
    The remaining cases are analogous.

    Since the components of $\eta$ are isomorphisms by \cref{dubtog}, it follows that $\eta \colon \id_{\DS(q,t)} \Rightarrow \bG \bF$ is a natural isomorphism.  Since $\bF \bG = \id_{\iqB(q,t)}$, we conclude that $\bG$ and $\bF$ are equivalences of categories.
\end{proof}

\begin{cor} \label{ginger}
    We have an equivalence of $\OS(q,t)$-modules $(\bG,\omega)$ from $\iqB(q,t)$ to $\DS(q,t)$, where $\omega$ has components
    \[
        \omega_{\Bobj_r,\lambda} \colon \bG(\Bobj_r \otimes \lambda)
        \xrightarrow{\eta_{\bG(\Bobj_r) \otimes \lambda}^{-1}}
        \bG(\Bobj_r) \otimes \lambda.
    \]
    for $r \in \N$, $\lambda \in \word$.
\end{cor}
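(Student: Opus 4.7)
My plan is to verify the three ingredients required: that $\omega$ is a natural isomorphism, that the module-morphism coherence \cref{buffalo} is satisfied, and that $(\bG,\omega)$ is an equivalence. The last is essentially free from \cref{equivthm}, which already supplies $\bF\bG = \id_{\iqB(q,t)}$ together with the natural isomorphism $\eta \colon \id_{\DS(q,t)} \Rightarrow \bG\bF$, so $\bG$ is an equivalence of the underlying categories.

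For well-definedness and the isomorphism property, I would first unpack $\bG(\Bobj_r) = \upobj^{\otimes r}$ and $\bG(\Bobj_r \otimes \lambda) = \upobj^{\otimes(r+\ell(\lambda))}$; since $\bF$ is strict and $\bF\bG = \id_{\iqB(q,t)}$, this shows that $\omega_{\Bobj_r,\lambda}$ has the correct source and target, and each component is an isomorphism because $\eta$ is one by \cref{equivthm}. For naturality, given $f \colon \Bobj_r \to \Bobj_s$ in $\iqB(q,t)$ and $g \colon \lambda \to \mu$ in $\OS(q,t)$, I would apply naturality of $\eta$ to the $\DS(q,t)$-morphism $\bG(f) \otimes g$. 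Strictness of $\bF$ as a morphism of $\OS(q,t)$-modules together with $\bF\bG = \id_{\iqB(q,t)}$ gives the identity $\bG\bF(\bG(f) \otimes g) = \bG(f \otimes g)$, which converts naturality of $\eta$ at $\bG(f) \otimes g$ directly into the naturality square for $\omega$ after inverting the $\eta$-components.

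For coherence, unwinding the definition of $\omega$, the diagram \cref{buffalo} reduces to the identity
\[
    \eta_{\upobj^{\otimes r} \otimes \lambda \otimes \mu} = \eta_{\upobj^{\otimes(r+\ell(\lambda))} \otimes \mu} \circ \bigl(\eta_{\upobj^{\otimes r} \otimes \lambda} \otimes 1_\mu\bigr).
\]
Both sides describe the diagram in $\DS(q,t)$ obtained by applying $\eta_\lambda$ and $\eta_\mu$ in parallel on the $\lambda$ and $\mu$ factors of $\upobj^{\otimes r} \otimes \lambda \otimes \mu$ while doing nothing on $\upobj^{\otimes r}$; the equality follows from the recursive definition \cref{eta} and the interchange of disjoint operations afforded by the $\OS(q,t)$-module structure on $\DS(q,t)$. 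I do not anticipate any serious obstacle: the substantive content is already packaged in \cref{equivthm}, and the corollary is essentially a transfer of the module-morphism structure along the quasi-inverse $\bG$ of $\bF$.
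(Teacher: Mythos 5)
Your proposal is correct and follows essentially the same route as the paper's proof: naturality of $\omega$ is inherited from naturality of $\eta$ (using strictness of $\bF$ and $\bF\bG=\id$), and the coherence diagram \cref{buffalo} reduces, after inverting, to the identity $\eta_{\bG(\Bobj_r)\otimes\lambda\otimes\mu}=\eta_{\bG(\Bobj_r\otimes\lambda)\otimes\mu}\circ(\eta_{\bG(\Bobj_r)\otimes\lambda}\otimes 1_\mu)$, which holds by the recursive definition \cref{eta} and the interchange law. The paper's proof is just a terser version of the same computation.
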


\begin{proof}
    The fact that $\omega$ is a natural isomorphism follows from the fact that $\eta$ is.  It remains to show that the analogue of diagram \cref{buffalo} commutes.  To verify this, we compute that
    \[
        (\omega_{\Bobj_r,\lambda} \otimes 1_\mu) \circ \omega_{\Bobj_r \otimes \lambda, \mu}
        = (\eta_{\bG(\Bobj_r) \otimes \lambda}^{-1} \otimes 1_\mu) \circ \eta_{\bG(\Bobj_r \otimes \lambda) \otimes \mu}^{-1}
        \overset{\cref{eta}}{=} \eta_{\bG(\Bobj_r) \otimes \lambda \otimes \mu}^{-1}
        = \omega_{\Bobj_r,\lambda \otimes \mu},
    \]
    as desired.
\end{proof}

\section{The iquantum enveloping superalgebra}

In this section we collect some important facts about the quantum symmetric pairs of interest to us, together with their representation theory.  Throughout this section, we work over the field $\kk = \C(q)$, where $q$ is an indeterminate.

When working with superspaces, we denote the parity of a homogeneous element $v$ by $\bar{v} \in \Z_2$.  When we write expressions involving $\bar{v}$, we implicitly assume that $v$ is homogeneous.  For a statement $P$, we define
\[
    \delta_P
    :=
    \begin{cases}
        1 & \text{if $P$ is true}, \\
        0 & \text{if $P$ is false}.
    \end{cases}
\]
In particular, we have the usual Kronecker delta $\delta_{i,j} := \delta_{i=j}$.   

\subsection{The quantum enveloping superalgebra\label{subsec:Uq}}

Fix $m,n \in \N$ and let
\[
    \Vset = \Vset(m|n) = \{m,m-1,\dotsc,1-n\}.
\]
Define a parity function
\[
    p \colon \Vset \to \{0,1\} \subseteq \Z,\qquad
    p(i) =
    \begin{cases}
        1 & \text{if } i \le 0, \\
        0 & \text{if } i > 0.
    \end{cases}
\]
The general linear Lie superalgebra $\fg = \fgl(m|n,\C)$ has a basis given by the unit matrices $E_{ij}$, $i,j \in \Vset$, where the parity of $E_{ij}$ is 
\[
    \bar{E}_{ij} = p(i) + p(j) \mod 2.
\]
Here, and in what follows, parities are always considered modulo $2$.

Let $\fh$ be the Cartan subalgebra of $\fg$ consisting of diagonal matrices.  Then the dual space $\fh^*$ has basis $\epsilon_i$, $i \in \Vset$, given by $\epsilon_i(E_{jj}) = \delta_{ij}$.  We define a symmetric bilinear form on the weight lattice $P = \bigoplus_{i \in \Vset} \Z \epsilon_i$ by
\[
    (\epsilon_i, \epsilon_j) = (-1)^{p(i)} \delta_{ij}.
\]
We set the parity of $\epsilon_i$ to be $\overline{\epsilon_i} = p(i)$.  We then extend parity to a homomorphism of additive groups $P \to \Z_2$, $\lambda \mapsto \bar{\lambda}$.

Let
\[
    I = \{ i \in \Z : 1-n \le i \le m-1 \}
\]
and choose the set of simple roots $\Pi = \{ \alpha_i : i \in I \}$, where
\[
    \alpha_i = \epsilon_i - \epsilon_{i+1}, \qquad i \in I.
\]
It follows that $\overline{\alpha_0} = 1$ and $\overline{\alpha_i} = 0$ for $i \in I$, $i \ne 0$.  The Dynkin diagram associated to this choice of simple roots is:
\begin{equation} \label{Dynkin}
    \begin{tikzpicture}[anchorbase]
        \draw (0,0) -- (1,0);
        \node at (1.5,0) {$\cdots$};
        \draw (2,0) -- (3,0) -- (4,0) -- (5,0) -- (6,0);
        \node at (6.5,0) {$\cdots$};
        \draw (7,0) -- (8,0);
        \filldraw[draw=black,fill=white] (0,0) circle (0.1);
        \filldraw[draw=black,fill=white] (3,0) circle (0.1);
        \filldraw[draw=black,fill=white] (4,0) circle (0.1);
        \filldraw[draw=black,fill=white] (5,0) circle (0.1);
        \filldraw[draw=black,fill=white] (8,0) circle (0.1);
        \draw (3.93,-0.07) -- (4.07,0.07);
        \draw (4.07,-0.07) -- (3.93,0.07);
        \node at (0,-0.3) {\strandlabel{1-n}};
        \node at (3,-0.3) {\strandlabel{-1}};
        \node at (4,-0.3) {\strandlabel{0}};
        \node at (5,-0.3) {\strandlabel{1}};
        \node at (8,-0.3) {\strandlabel{m-1}};
    \end{tikzpicture}
\end{equation}
In the above diagram, the crossed node corresponds to an odd isotropic simple root and the other nodes correspond to even simple roots.  We have the coweight lattice $P^\vee = \bigoplus_{i \in \Vset} \Z \epsilon_i^\vee$, with the pairing
\[
    \langle\ ,\ \rangle \colon P^\vee \times P \to \Z,\qquad
    \langle \epsilon_i^\vee, \epsilon_j \rangle = \delta_{i,j}.
\]
The set of simple coroots is $\Pi^\vee = \{h_i : i \in I\}$, where
\[
    h_i = \epsilon_i^\vee - (-1)^{\delta_{i,0}} \epsilon_{i+1}^\vee,\qquad i \in I.
\]
We set
\[
    a_{ij} = \langle h_i, \alpha_j \rangle,
    \qquad i,j \in I.
\]

We let $X = \Z \Pi$ and $Y = \Z \Pi^\vee$ denote the root lattice and coroot lattice, respectively.  We define
\[
    d_i
    =
    \begin{cases}
        1 & \text{if } i > 0, \\
        -1 & \text{if } i \le 0,
    \end{cases}
    \qquad i \in I.
\]
(The $d_i$ are uniquely determined by the condition $(\alpha_i, \alpha_j) = d_i a_{ij}$ for all $i,j \in I$.)  Then set $q_i = q^{d_i}$ for all $i \in I$.   We define the quantum integers
\begin{equation} \label{qint}
    [a]=\frac{q^{a}-q^{-a}}{q-q^{-1}},\qquad a \in \Z.
\end{equation}

Following \cite[Th.~10.5.1]{Yam94}, we define the quantum enveloping superalgebra $\Ualg = U_q(\fg)$ to be the unital associative superalgebra over $\C(q)$ with generators
\[
    E_i, F_i, \quad i \in I, \qquad
    K_h,\quad h \in Y,
\]
of parities
\[
    \overline{E_i} = \overline{F_i} = \overline{\alpha_i},\quad i \in I, \qquad
    \overline{K_h} = 0,\quad h \in Y,
\]
subject to the following relations for $h,h' \in Y$, $i,j \in I$:
\begin{gather}
    K_0 = 1,\qquad
    K_h K_h = K_{h+h'},
    \\ \label{KEF}
    K_h E_i = q^{\langle h, \alpha_i \rangle} E_i K_h,\qquad
    K_h F_i = q^{- \langle h, \alpha_i \rangle} F_i K_h,
    \\ \label{skunk}
    [E_i,F_j]
    = \delta_{i,j} \frac{K_i - K_i^{-1}}{q_i-q_i^{-1}},\qquad
    \text{where } K_i := K_{h_i}^{d_i},
    \\
    [E_i,E_j] = 0,\qquad
    [F_i,F_j] = 0,\qquad
    \text{ for } a_{ij}=0
    \\ \label{racoon}
    E_i^2 E_j - [2] E_i E_j E_i + E_j E_i^2 = 0 = F_i^2 F_j - [2] F_i F_j F_i + F_j F_i^2,\qquad
    \text{for } |a_{ij}|=1,\ i \ne 0,
    \\
    [[[E_{-1},E_0]_{q_0},E_1]_{q_1}],E_0]=0,\quad
    [[[F_{-1},F_0]_{q_0},F_1]_{q_1}],F_0]=0,\qquad \text{if } n \ge 1,
\end{gather}
where
\[
    [X,Y]_a := XY-(-1)^{\bar{X} \bar{Y}} a YX, \quad a \in \kk,
    \qquad \text{and} \qquad [X,Y] := [X,Y]_1.
\]

Then $\Ualg$ is a Hopf superalgebra with coproduct $\Delta$, counit $\varepsilon$, and antipode $S$ given by
\begin{gather} \label{comult}
    \Delta(E_i) = E_i \otimes 1 + K_i \otimes E_i,\quad
    \Delta(F_i) = F_i \otimes K_i^{-1} + 1 \otimes F_i,\quad
    \Delta(K_h) = K_h \otimes K_h,
    \\
    \varepsilon(E_i) = 0,\qquad
    \varepsilon(F_i) = 0,\qquad
    \varepsilon(K_h) = 1,
    \\ \label{antipode}
    S(E_i) = -K_i^{-1} E_i,\qquad
    S(F_i) = -F_i K_i,\qquad
    S(K_h) = K_{-h}.
\end{gather}

Let $\sigma$ be the Hopf superalgebra automorphism of $\Ualg$ given by
\begin{equation} \label{sigma}
    \sigma(E_i) = (-1)^{\delta_{i=m-1 \ge 0}} E_i,\quad
    \sigma(F_i) = (-1)^{\delta_{i=m-1 \ge 0}} F_i,\quad
    \sigma(K_h) = K_h,\qquad
    i \in I,\ h \in Y.
\end{equation}
We define
\begin{equation} \label{Uisigma}
    \Us = \Ualg \rtimes \langle \sigma \rangle
\end{equation}
to be the superalgebra obtained from $\Ualg$ by adjoining an even generator $\sigma$, subject to the relations
\[
    \sigma^2 = 1,\quad
    \sigma x \sigma = \sigma(x),\qquad x \in \Ualg.
\]
We can extend the Hopf superalgebra structure of $\Ualg$ to $\Us$ by defining
\[
    \Delta(\sigma) = \sigma \otimes \sigma,\quad
    \varepsilon(\sigma) = 1,\quad
    S(\sigma) = \sigma.
\]
The reason for introducing $\sigma$ will become apparent when we consider the iquantum enveloping superalgebra; see \cref{pavement}.
\details{
    In general, if $\Gamma$ is a finite group acting on a Hopf superalgebra $H$ by even automorphisms, then one can form the crossed product $H \rtimes \Gamma$ Hopf superalgebra.  This is equal to $H \otimes \kk \Gamma$ as a coalgebra, with $S$ acting diagonally, and with multiplication determined by each factor $H$ and $\Gamma$ being a sub-superalgebra and $\gamma x \gamma^{-1} = \gamma(x)$ for all $g \in \Gamma$ and $x \in H$.
}

For $i \ne 0$, we have an automorphism of superalgebras $T_i \colon \Ualg \to \Ualg$ given by
\begin{equation} \label{Tidef}
    \begin{gathered}
        T_i (E_i) = -F_i K_{i},\qquad
        T_i (F_i) = - K_{i}^{-1}E_i,
        \\
        T_i(E_j) = E_j,\qquad T_i(F_j)=F_j, \qquad \text{for} \quad |j-i|>1,
        \\
        T_i(E_j) = E_i E_j - q^{(\alpha_i,\alpha_j)} E_j E_i,\qquad
        T_i(F_j) = F_i F_j - q^{-(\alpha_i,\alpha_j)} F_j F_i,
        \qquad \text{for} \quad |j-i|=1,
        \\
        T_i (K_h) = K_{s_i(h)},\quad h \in Y,
    \end{gathered}
\end{equation}
where $s_i$ is the reflection corresponding to the simple root $\alpha_i$.  The $T_i$ are super versions of the automorphisms $T_{i,1}''$ of \cite[\S 37.1.3]{Lus10} defining a braid group action; see \cite[Prop.~7.4.1]{Yam99} and \cite[Th.~3.4]{She25}.

\subsection{The natural module and its dual}

We now recall the quantum analogue of the natural module and its dual.  The quantum analogue $V^+$ of the natural module has basis $\{v_j^+ : j \in \Vset\}$, with the action of $\Us$ given by
\begin{equation} \label{panda+}
    \begin{gathered}
        E_i v_j^+ = \delta_{i+1,j} v_{j-1}^+,\qquad
        F_i v_j^+ = \delta_{i,j} v_{j+1}^+,\qquad
        K_h v_j^+ = q^{\langle h, \epsilon_j \rangle} v_j^+,
        \\
        K_i v_j^+ = q^{(-1)^{\delta_{j \le 0}}(\delta_{i,j} - \delta_{i+1,j})} v_j^+,\qquad
        \sigma v_j^+ = (-1)^{\delta_{j=m>0}} v_j^+.
    \end{gathered}
\end{equation}
We remind the reader that $K_i = K_{h_i}^{d_i}$; see \cref{skunk}.
\details{
    In terms of the elementary matrices, we can write the associated representation $\rho_+$ as
    \[
        \rho_+(E_i) = E_{i,i+1},\qquad
        \rho_+(F_i) = E_{i+1,i},\qquad
        \rho_+(K_i) = q^{d_i} E_{i,i} + q^{-(-1)^{\delta_{i,0}}d_i} E_{i+1,i+1}.
    \]
    Then, to verify \cref{skunk}, we compute
    \begin{multline*}
        \rho_+ \left( E_i F_j - (-1)^{\delta_{i,0} \delta_{j,0}} F_j E_i \right)
        = E_{i,i+1} E_{j+1,j} - (-1)^{\delta_{i,0} \delta_{j,0}} E_{j+1,j} E_{i,i+1}
        \\
        = \delta_{i,j} \left( E_{i,i} - (-1)^{\delta_{i,0}} E_{i+1,i+1} \right)
        = \delta_{i,j} \rho_+ \left( \frac{K_i-K_i^{-1}}{q_i-q_i^{-1}} \right).
    \end{multline*}
}
The dual module $V^-$ has basis $\{ v_j^- : j \in \Vset \}$, with the action of $\Us$ given by
\begin{equation} \label{panda-}
    \begin{gathered}
        E_i v_j^- = \delta_{i,j} v_{j+1}^-,\qquad
        F_i v_j^- = \delta_{i+1,j} (-1)^{\delta_{i,0}} v_{j-1}^-,\qquad
        K_h v_j^- = q^{-\langle h, \epsilon_j \rangle} v_j^-,
        \\
        K_i v_j^- = q^{(-1)^{\delta_{j > 0}}(\delta_{i,j} - \delta_{i+1,j})}  v_j^-,\qquad
        \sigma v_j^- = (-1)^{\delta_{j=m>0}} v_j^-.
    \end{gathered}
\end{equation}
\details{
    We can write the associated representation $\rho_-$ as
    \[
        \rho_-(E_i) = E_{i+1,i},\qquad
        \rho_-(F_i) = (-1)^{\delta_{i,0}} E_{i,i+1},\qquad
        \rho(K_i) = q^{-d_i} E_{i,i} + q^{(-1)^{\delta_{i,0}}d_i} E_{i+1,i+1}.
    \]
    Then, to verify \cref{skunk}, we compute
    \begin{multline*}
        \rho_- \left( E_i F_j - (-1)^{\delta_{i,0} \delta_{j,0}} F_j E_i \right)
        = (-1)^{\delta_{i,0}} E_{i+1,i} E_{j,j+1} - (-1)^{\delta_{i,0} + \delta_{i,0} \delta_{j,0}} E_{j,j+1} E_{i+1,i}
        \\
        = \delta_{i,j} \left( (-1)^{\delta_{i,0}} E_{i+1,i+1} - E_{i,i} \right)
        = \delta_{i,j} \rho_- \left( \frac{K_i-K_i^{-1}}{q_i-q^{-1}} \right).
    \end{multline*}
    Note that $V^-$ is obtained from $V^+$ via composing with the algebra automorphism $E_i\mapsto F_i$, $F_i\mapsto(-1)^{\delta_{i,0}} E_i$, $K_h\mapsto K_h^{-1}$, $\sigma\mapsto \sigma$.
}

We have a homomorphism of $\Us$-modules
\begin{equation} \label{ev+}
    \ev^+ \colon V^+ \otimes V^- \to \triv,\qquad
    \ev^+(v_i^+ \otimes v_j^-) = \delta_{i,j} (-q)^{|i|},
\end{equation}
where $\triv$ denotes the trivial $\Us$-module.
\details{
    We use the equality
    \[
        \ev^+(x v_i^+ \otimes v_j^-)
        = (-1)^{\bar{x} p(i)} \ev^+(v_i^+ \otimes S(x) v_j^-)
        \qquad \text{for all } i,j \in \Vset,\ x \in \Ualg.
    \]
    to verify that \cref{ev+} is the correct form.  It follows from weight considerations that $\ev^+(v_i^+ \otimes v_j^-) = 0$ when $i \ne j$.  Thus, we need only verify the equality in \cref{ev+} when $i=j$.  We normalize $\ev^+$ so that $\ev^+(v_m^+ \otimes v_m^-) = (-q)^m$. Then, for $2-n \le j \le m$, we have
    \begin{align*}
        \ev^+(v_{j-1}^+ \otimes v_{j-1}^-)
        \ &\overset{\mathclap{\cref{panda+}}}{=}\ \ev^+(E_{j-1} v_j^+ \otimes v_{j-1}^-) \\
        &= \ev^+ (v_j^+ \otimes S(E_{j-1}) v_{j-1}^-) \\
        \ &\overset{\mathclap{\cref{antipode}}}{=}\ -\ev^+(v_j^+ \otimes K_{j-1}^{-1} E_{j-1} v_{j-1}^-) \\
        \ &\overset{\mathclap{\cref{panda-}}}{=}\ - \ev^+(v_j^+ \otimes K_{j-1}^{-1} v_j^-) \\
        \ &\overset{\mathclap{\cref{panda-}}}{=}\ - q^{d_{j-1} \langle h_{j-1}, \epsilon_j \rangle} \ev^+(v_j^+ \otimes v_j^-) \\
        &= - q^{-(-1)^{\delta_{j,1}}d_{j-1}} \ev^+(v_j^+ \otimes v_j^-) \\
        &= - q^{(-1)^{\delta_{j > 0}}} \ev^+(v_j^+ \otimes v_j^-).
    \end{align*}
    This proves by induction that \cref{ev+} holds for all $i=j \in \Vset$.
}
Since $V^+$ and $V^-$ are simple, the homomorphism space $\Hom_\Ualg(V^+ \otimes V^-,\triv)$ is one-dimensional, spanned by $\ev^+$.  We also have a homomorphism of $\Us$-modules
\begin{equation} \label{ev-}
    \ev^- \colon V^- \otimes V^+ \to \triv,\qquad
    \ev^- (v_i^- \otimes v_j^+) = \delta_{i,j} (-1)^{i+p(i)} q^{m+n-|i-1|},
\end{equation}
and $\Hom_\Ualg(V^- \otimes V^+,\triv)$ is one-dimensional, spanned by $\ev^-$.
\details{
    As for $\ev^+$, we need only consider the equation in \cref{ev-} when $i=j$.  We normalize $\ev^-$ so that $\ev^-(v_{1-n}^- \otimes v_{1-n}^+) = (-1)^{1-n+p(1-n)}q^m$.  For $1-n \le j \le m-1$, we have
    \begin{align*}
        \ev^-(v_{j+1}^- \otimes v_{j+1}^+)
        \ &\overset{\mathclap{\cref{panda-}}}{=}\ \ev^-(E_j v_j^- \otimes v_{j+1}^+) \\
        &= (-1)^{\delta_{j,0}} \ev^-(v_j^- \otimes S(E_j) v_{j+1}^+) \\
        \ &\overset{\mathclap{\cref{antipode}}}{=}\ - (-1)^{\delta_{j,0}} \ev^-(v_j^- \otimes K_j^{-1} E_j v_{j+1}^+) \\
        \ &\overset{\mathclap{\cref{panda+}}}{=}\ - (-1)^{\delta_{j,0}} \ev^-(v_j^- \otimes K_j^{-1} v_j^+) \\
        \ &\overset{\mathclap{\cref{panda+}}}{=}\ - (-1)^{\delta_{j,0}} q^{-d_j \langle h_j, \epsilon_j \rangle} \ev^-(v_j^- \otimes v_j^+) \\
        &= - (-1)^{\delta_{j,0}} q^{-d_j} \ev^-(v_j^- \otimes v_j^+) \\
        &= - (-1)^{\delta_{j,0}} q^{(-1)^{\delta_{j>0}}} \ev^-(v_j^- \otimes v_j^+).
    \end{align*}
    This proves by induction that \cref{ev-} holds for all $i=j \in \Vset$.
}

For a basis $B^\pm$ of $V^\pm$, we let $\{ v^\vee : v \in B^\pm \}$ denote the dual basis of $V^\mp$ determined by
\[
    \ev^\mp( v^\vee \otimes w ) = \delta_{v,w},\qquad v,w \in B^\pm.
\]
It follows from \cref{ev+,ev-} that the bases dual to $\{v_i^+ : i \in \Vset\}$ and $\{v_i^- : i \in \Vset\}$ are given by
\begin{equation} \label{button}
    (v_i^+)^\vee = (-1)^{i+p(i)} q^{|i-1|-m-n} v_i^-,\qquad
    (v_i^-)^\vee = (-q)^{-|i|} v_i^+.
\end{equation}
We have $\Us$-module homomorphisms
\begin{equation} \label{coev}
    \coev_\pm \colon \triv \to V^\mp \otimes V^\pm,\qquad
    1 \mapsto \sum_{v \in B^\mp} v \otimes v^\vee.
\end{equation}
This definition is independent of the basis $B^\mp$.

Let $\Us$-tmod denote the category of tensor $\Us$-modules.  By definition, this is the full subcategory of the category of $\Us$-modules whose objects are finite direct sums of summands of tensor products of $V^+$ and $V^-$.  The category $\Us$-tmod is braided monoidal, where the braiding is given by the universal $R$-matrix.  Define
\[
    p(i,j) := \delta_{i,j} (1-2p(i))
    =
    \begin{cases}
        0, & i \ne j, \\
        1, & i=j > 0, \\
        -1, & i=j \le 0,
    \end{cases}
    \qquad i,j \in \Vset.
\]
Then the action of the braiding on $V^+ \otimes V^+$ is
\begin{equation} \label{Vbraid}
    \begin{gathered}
        T_{++} \colon V^+ \otimes V^+ \to V^+ \otimes V^+,
        \\
        T_{++}(v_i^+ \otimes v_j^+)
        = (-1)^{p(i)p(j)} q^{p(i,j)} v_j^+ \otimes v_i^+ + \delta_{i<j} (q-q^{-1}) v_i^+ \otimes v_j^+.
    \end{gathered}
\end{equation}
See, for example, \cite[(1)]{Mit06}.  One can verify by direct computation that
\begin{equation} \label{Vbraidinv}
    T_{++}^{-1}(v_i^+ \otimes v_j^+)
    =
    (-1)^{p(i)p(j)} q^{-p(i,j)} v_j^+ \otimes v_i^+ - \delta_{i>j} (q-q^{-1}) v_i^+ \otimes v_j^+.
\end{equation}
\details{
    It follows from the explicit formula for the universal $R$-matrix given in \cite[Th.~10.6.1]{Yam94} that it commutes with the action of $\sigma \otimes \sigma$.
}

\subsection{The iquantum enveloping superalgebra}

Following \cite[\S 7.1]{SW24}, we consider the following super Satake diagram of type AI-II:
\begin{equation} \label{satake}
    \begin{tikzpicture}[anchorbase,xscale=-1]
        \draw (0,0) -- (1,0);
        \node at (1.5,0) {$\cdots$};
        \draw (2,0) -- (8,0);
        \node at (8.5,0) {$\cdots$};
        \draw (9,0) -- (11,0);
        \filldraw[fill=white,draw=black] (0,0) circle (0.1);
        \filldraw[fill=white,draw=black] (3,0) circle (0.1);
        \filldraw[draw=black,fill=white] (4,0) circle (0.1);
        \filldraw[fill=black] (5,0) circle (0.1);
        \filldraw[fill=white,draw=black] (6,0) circle (0.1);
        \filldraw[fill=black] (7,0) circle (0.1);
        \filldraw[fill=white,draw=black] (10,0) circle (0.1);
        \filldraw[fill=black] (11,0) circle (0.1);
        \draw (3.93,-0.07) -- (4.07,0.07);
        \draw (4.07,-0.07) -- (3.93,0.07);
        \node at (0,-0.3) {\strandlabel{m-1}};
        \node at (3,-0.3) {\strandlabel{1}};
        \node at (4,-0.3) {\strandlabel{0}};
        \node at (5,-0.3) {\strandlabel{-1}};
        \node at (6,-0.3) {\strandlabel{-2}};
        \node at (7,-0.3) {\strandlabel{-3}};
        \node at (11,-0.3) {\strandlabel{1-2n}};
    \end{tikzpicture}
\end{equation}
where $I_\bu = \{1-2a : 1 \leq a \leq n\}$ and $I_\circ = I \backslash I_\bu$.  The diagram automorphism $\tau$ that is part of the data of the Satake diagram is the identity in our case.  The diagram \cref{satake} corresponds to an involution on $\fgl(m|2n)$; this is the involution denoted $\theta(I,\tau)$ in \cite[(2.5)]{SW24}. Note that the underlying Dynkin diagram of \cref{satake} is \cref{Dynkin}, with $n$ replaced by $2n$.  In the extreme case where $n=0$, we obtain a Satake diagram of type AI; when $m=0$, we obtain a Satake diagram of type AII; see \cite[Table~4]{BW18}.

The following definition, given in \cite[Def.~4.1]{SW24}, was inspired by \cite[\S4]{Let99} and \cite[Def.~5.1]{Kol14}, which treated the non-super case.

\begin{defin}\label{def:iQG}
    Fix $\va_i \in \C(q)^\times$ for $i \in I_\circ$.  The corresponding \emph{iquantum enveloping superalgebra} $\Ui$ of type AI-II is the $\C(q)$-subalgebra of $\Ualg$ generated by the elements
    \begin{equation} \label{tubeless}
        \begin{gathered}
             B_i = F_i + \va_i E_{i}K_i^{-1},\qquad 1 \le i \le m-1,\\
             B_0 = F_0 + \va_0 T_{-1}(E_0)K_0^{-1}, \\
             B_{2k} = F_{2k} + \va_{2k} T_{2k+1} \big( T_{2k-1}(E_{2k}) \big) K_{2k}^{-1},\qquad 1-n \le k \le -1,\\
             E_{2k+1}, F_{2k+1}, K_{2k+1}^{\pm 1},\qquad -n \le k \le -1.
        \end{gathered}
    \end{equation}
    We define $\Uis$ to be the subalgebra of $\Us$ generated by $\Ui$ and $\sigma$.
\end{defin}

The restriction of the automorphism $\sigma$ of \cref{sigma} to $\Ui$ is given by
\begin{gather}\label{Binv1}
    \sigma(B_i) = (-1)^{\delta_{i=m-1 \ge 0}} B_i,\qquad i \in I_\circ,
    \\ \label{Binv2}
    \sigma(E_i) = E_i,\qquad
    \sigma(F_i) = F_i,\qquad
    \sigma(K_i^{\pm 1}) = K_i^{\pm 1},\qquad
    i \in I_\bu.
\end{gather}
The braid group actions appearing in \cref{tubeless} can be computed explicitly, using \cref{Tidef}:
\begin{equation} \label{iodine1}
    T_{-1}(E_0) = E_{-1} E_0 - q E_0 E_{-1},
\end{equation}
\begin{multline} \label{iodine2}
    T_{2k+1}(T_{2k-1}(E_k))
    = E_{2k-1}E_{2k+1}E_{2k} - q E_{2k-1}E_{2k}E_{2k+1} \\
    - qE_{2k+1}E_{2k}E_{2k-1} + q^2E_{2k}E_{2k+1}E_{2k-1}.
\end{multline}
\details{
    We have $(\alpha_0,\alpha_{-1})=1$ and $(\alpha_{2k-1},\alpha_{2k}) = (\alpha_{2k+1},\alpha_{2k}) = 1$, for $1-n \le k \le -1$.  Thus, \cref{iodine1} follows immediately and, for \cref{iodine2}, we compute
    \begin{align*}
        T_{2k+1}T_{2k-1}(E_{2k})
        &= T_{2k+1}(E_{2k-1}E_{2k}-qE_{2k}E_{2k+1})
        \\
        &= T_{2k+1}(E_{2k-1})T_{2k+1}(E_{2k})-qT_{2k+1}(E_{2k})T_{2k+1}(E_{2k-1})
        \\
        &= E_{2k-1}(E_{2k+1}E_{2k}-qE_{2k}E_{2k+1})-q(E_{2k+1}E_{2k}-qE_{2k}E_{2k+1}) E_{2k-1}
        \\
        &= E_{2k-1}E_{2k+1}E_{2k}-q E_{2k-1}E_{2k}E_{2k+1}-qE_{2k+1}E_{2k}E_{2k-1}+q^2E_{2k}E_{2k+1}E_{2k-1}.
    \end{align*}
}

By \cite[Prop.~4.2]{SW24}, $\Ui$ is a right coideal subalgebra of $\Ualg$ since
\[
    \Delta(\Ui) \subseteq \Ui \otimes \Ualg.
\]
Thus $\Uis$ is also a right coideal subalgebra of $\Us$.  Let $\Uis$-tmod denote the full subcategory of $\Ui$-modules whose objects are restrictions of objects in $\Us\tmod$.  Then $\Uis\tmod$ is naturally a right module category over $\Us\tmod$. Given $M \in \Uis\tmod$ and $N \in \Us\tmod$, the bifunctor
\[
    \otimes \colon \Uis\tmod \times \Us\tmod \to \Uis\tmod
\]
sends $(M,N)$ to $M \otimes N := M \otimes_\kk N$, which is naturally a $\Uis$-module via $\Delta$.

We note that the superalgebra $\Ui$ is a quantum analogue of the enveloping superalgebra of the orthosymplectic Lie superalgebra $\osp(m|2n,\C)$; see \cite[Ex.~2.16 and Rem.~7.8]{SW24} and the proof of \cref{fullthm}.

\begin{rem}
    The superalgebra $\Ui$ is sometimes called an \emph{iquantum supergroup}.  Let us explain how \cref{def:iQG} is obtained from \cite[Def.~4.1]{SW24}.  First note that, as defined in \cite[\S 4.1]{SW24}, $\Ui$ also depends on a set of parameters $\{\kappa_i \in \C(q) : i \in I_\circ\}$.  However, the condition \cite[(4.10)]{SW24} forces all $\kappa_i=0$ in our case.  The diagram automorphism $\tau$ appearing in \cite[Def.~4.1]{SW24} is the identity for us.  The $w_\bullet$ there is the longest element of the Weyl group associated to the subdiagram of \cref{satake} containing the black nodes.  Thus, for us, $T_{w_\bullet} = T_{-1} T_{-3} \dotsm T_{1-2n}$ and the $T_{1-2j}$, $1 \le j \le n$, pairwise commute.  For $i \ne j$, we have $T_i(E_j) = E_j$ whenever $a_{ij}=0$.  Thus
    \[
        T_{w_\bu}(E_0) = T_{-1}(E_0),\quad
        T_{w_\bu}(E_{2k}) = T_{2k+1} (T_{2k-1} (E_{2k})),\qquad
        -1 \le k \le 1-n.
    \]
\end{rem}

\begin{rem}\label{revolution}
    Up to isomorphisms of coideal subalgebras, $\Ui$ is independent of the parameters $\va_i$.  More precisely, for two choices $\va = (\va_i)_{i \in I_\circ}$ and $\va' = (\va'_i)_{i \in I_\circ}$, there is a Hopf algebra automorphism of $\Ualg$ sending $\Ui$ for the choice $\va$ to $\Ui$ for the choice $\va'$.  See \cite[Lem.~2.51 and Rem.~2.5.2]{Wat21} for details.  While the treatment there is for the non-super case, the same automorphism applies in our setting.  Because of this independence, one loses no generality in choosing specific parameters.  The choice
    \begin{equation}\label{oilers}
        \va_i
        =
        \begin{cases}
            q^{-1} & \text{if } 0 < i < m-1, \\
            -q^{-1} & \text{if } i \in \{0,-2,-4,\dotsc,2-2n\},
        \end{cases}
    \end{equation}
    simplifies some of the formulas in the current paper.  However, we work with general parameters since it does not involve substantially more work and allows the reader to make whatever choice is more convenient for their purposes.
\end{rem}

\begin{rem}\label{pavement}
    The passage from $\Ui$ to $\Uis$ is a quantum analogue of the passage from $\osp(m|2n)$ to $\mathrm{OSp}(m|2n)$.  The extra generator $\sigma$ corresponds to an element of $\mathrm{O}(m) \times \mathrm{Sp}(2n)$ that has determinant $-1$ and hence is not in the identity component.  This will be needed in order for the incarnation functor defined in \cref{sec:fullness} to be full.
\end{rem}

\begin{rem}
    When $m$ and $n$ are both nonzero, there exist various distinct super Satake diagrams corresponding to the supersymmetric pair $(\fgl(m|2n), \osp(m|2n))$; see \cite[Ex.~2.16]{SW24} for a complete list of possible choices. Although we have made a specific choice in \cref{satake}, we expect that all the results in this manuscript can be developed in a parallel fashion for other choices of Satake diagrams.  The precise relationship between the iquantum enveloping algebras arising from different Satake diagrams remains unclear, though they are expected to be isomorphic as superalgebras.  A uniform framework encompassing all choices could potentially be realized via a general theory of the universal $K$-matrix in the super case, following the approach of \cite{BW18,BK19} in the non-super case; see also \cref{reflection}.  A step in this direction was taken in \cite[\S 6]{SW24}, where the quasi $K$-matrix was constructed for general quantum supersymmetric pairs. Motivated by this philosophy, we formulate many of our arguments in \cref{sec:incar} and beyond in a way that avoids reliance on any particular choice of Satake diagram.
\end{rem}

\subsection{Modules over the iquantum enveloping superalgebra}

We have a restriction functor
\[
    \Res \colon \Us\tmod \to \Uis\tmod.
\]
When there is no chance for confusion, we will sometimes continue to denote $\Res(V^\pm)$ by $V^\pm$.  Direct computation using \cref{panda+,panda-,iodine1,iodine2} shows that the actions of $B_i$, $i\in I_\circ$, on $V^+$ and $V^-$ are given by
\begin{equation} \label{sloth}
    B_i v^+_j
    =
    \begin{cases}
        v_{j+1}^+ & \text{if } j=i, \\
        - q \va_i v_{j-3}^+ & \text{if } i<0,\ j=i+2, \\
        - q \va_0 v_{-1}^+ & \text{if } i=0,\ j=1, \\
        q \va_i v_{j-1}^+ & \text{if } i>0,\ j=i+1, \\
        0 & \text{otherwise},
    \end{cases}
    \quad
    B_i v^-_j
    =
    \begin{cases}
        (-1)^{\delta_{i,0}} v_{j-1}^- & \text{if } j=i+1, \\
        - q \va_i v_{j+3}^- & \text{if } i<0,\ j=i-1, \\
        - q \va_0 v_1^- & \text{if } i=0,\ j=-1, \\
        q \va_i v_{j+1}^- & \text{if } i>0,\ j=i, \\
        0 & \text{otherwise}.
    \end{cases}
\end{equation}
\details{
    We first consider the action on $V^+$.  If $i>0$, then
    \[
        B_i(v_j^+)
        \overset{\cref{tubeless}}{=} (F_i + \va_i E_i K_i^{-1}) v_j^+
        \overset{\cref{panda+}}{=} \delta_{i,j} v_{j+1}^+ + \delta_{i+1,j} q \va_i v_{j-1}^+.
    \]
    If $i=0$, we have
    \[
        B_0 v_j^+
        \overset{\cref{tubeless}}{\underset{\cref{iodine1}}{=}} (F_0 + \va_0 E_{-1} E_0 K_0^{-1}) v_j^+
        \overset{\cref{panda+}}{=} \delta_{j,0} v_1^+ + \delta_{j,1} q \va_0 v_{-1}^+,
    \]
    where, in the first equality, we used the fact that $E_0 E_{-1}$ acts as zero on $V^+$ by \cref{panda+}.
    For $1-n \le k \le -1$, we have
    \[
        B_{2k} v_j^+
        \overset{\cref{tubeless}}{\underset{\cref{iodine2}}{=}} (F_{2k} - q \va_{2k} E_{2k-1} E_{2k} E_{2k+1} K_{2k}^{-1})v_j^+
        \overset{\cref{panda+}}{=} \delta_{2k,j} v_{j+1}^+ - \delta_{2k+2,j} q \va_{2k} v_{2k-1}^+,
    \]
    where, in the first equality, we used the fact that $E_{2k-1} E_{2k+1} E_{2k}$, $E_{2k+1} E_{2k} E_{2k-1}$, and $E_{2k} E_{2k+1} E_{2k-1}$ act as zero on $V^+$ by \cref{panda+}.

    Now we consider the action on $V^-$.  If $i>0$, then
    \[
        B_i(v_j^+)
        \overset{\cref{tubeless}}{=} (F_i + \va_i E_i K_i^{-1}) v_j^-
        \overset{\cref{panda-}}{=} \delta_{i+1,j} v_{j-1}^- + \delta_{i,j} q \va_i v_{j+1}^-.
    \]
    If $i=0$, we have
    \[
        B_0 v_j^+
        \overset{\cref{tubeless}}{\underset{\cref{iodine1}}{=}} (F_0 - q \va_0 E_0 E_{-1} K_0^{-1}) v_j^-
        \overset{\cref{panda-}}{=} - \delta_{j,1} v_0^- - \delta_{j,-1} q \va_0 v_1^-,
    \]
    where, in the first equality, we used the fact that $E_{-1} E_0$ acts as zero on $V^-$ by \cref{panda-}.  For $1-n \le k \le -1$, we have
    \[
        B_{2k} v_j^-
        \overset{\cref{tubeless}}{\underset{\cref{iodine2}}{=}} (F_{2k} - q \va_{2k} E_{2k+1} E_{2k} E_{2k-1} K_{2k}^{-1})v_j^-
        \overset{\cref{panda-}}{=} \delta_{2k+1,j} v_{j-1}^- - \delta_{2k-1,j} q \va_{2k} v_{2k+2}^-,
    \]
    where, in the first equality, we used the fact that $E_{2k-1} E_{2k+1} E_{2k}$, $E_{2k-1} E_{2k} E_{2k+1}$, and $E_{2k} E_{2k+1} E_{2k-1}$ act as zero on $V^+$ by \cref{panda+}.
}

Define the involution
\begin{equation} \label{pump}
    \phi \colon \Vset \to \Vset,\quad
    \phi(i)
    = i - \delta_{i \le 0} (-1)^i
    =
    \begin{cases}
        i, & i>0, \\
        i-1, & i \in \{0,-2,-4,\dotsc,2-2n\}, \\
        i+1, & i \in \{-1,-3,-5,\dotsc,1-2n\}.
    \end{cases}
\end{equation}
For $i \in \Vset$, define
\begin{equation} \label{kowloon}
    \tva_i
    :=
    \begin{cases}
        \prod_{k=1}^{i-1} (q \va_k)^{-1}, & i > 0, \\
        \prod_{k = \lfloor \frac{i+1}{2} \rfloor}^0 (-q \va_{2k}), & i \le 0.
    \end{cases}
\end{equation}
Note that $\tva_{\phi(i)} = \tva_i$ for all $i \in \Vset.$

\begin{lem} \label{varphi}
    The linear map $\varphi \colon V^- \to V^+$ determined by
    \begin{equation}\label{varphidef}
        \varphi(v_i^-) = \tva_i v_{\phi(i)}^+
    \end{equation}
    is a $\Uis$-module isomorphism.  Its inverse is determined by $\varphi^{-1}(v_i^+) = \tva_i^{-1} v_{\phi(i)}^-$.
\end{lem}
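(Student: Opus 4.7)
\smallskip

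\noindent\textbf{Proof plan.} The map $\varphi$ is manifestly a linear isomorphism: $\phi$ is an involution on $\Vset$ and each $\tva_i \in \C(q)^\times$, so the inverse formula follows immediately once one notes the identity $\tva_{\phi(i)} = \tva_i$ (direct from \cref{kowloon}, since $\phi$ swaps $2k-1$ with $2k$ for $k \le 0$ and the product in the second case of \cref{kowloon} depends only on $\lfloor (i+1)/2 \rfloor$). So the real content is to check that $\varphi$ intertwines the action of every generator of $\Uis$.

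The plan is to dispatch the easy generators first and then attack $B_i$. For $\sigma$: both $v^\pm_i$ are $\sigma$-eigenvectors with eigenvalue $(-1)^{\delta_{i=m>0}}$ by \cref{panda+,panda-}, and $\phi$ fixes positive indices pointwise, hence $\phi(i)=m$ iff $i=m$ when $m>0$; this gives $\sigma\varphi=\varphi\sigma$. For the generators $E_j, F_j, K_j^{\pm 1}$ with $j \in I_\bu$ (so $j$ odd and negative): then $\phi$ swaps $j$ with $j+1$, and by \cref{kowloon} one has $\tva_j = \tva_{j+1}$ (both equal $\prod_{k=\lfloor (j+1)/2\rfloor}^0(-q\va_{2k})$). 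Comparing the explicit formulas in \cref{panda+,panda-} shows $K_j$ acts with identical eigenvalue on $v_i^+$ and $v_i^-$, while $E_j, F_j$ each produce one nonzero coefficient that matches after $\varphi$ is applied precisely because $\tva_j = \tva_{j+1}$.

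The main step is checking $\varphi B_i = B_i \varphi$ for $i \in I_\circ$, handled in three subcases according to \cref{sloth}. For $i>0$, $\phi$ acts as the identity on $\{i,i+1\}$, and the required equality reduces to $\tva_{i+1} = \tva_i/(q\va_i)$, which is exactly the recursion built into the $i>0$ clause of \cref{kowloon}. For $i<0$ with $i$ even, $\phi$ swaps $\{i,i+1\}$ and $\{i+2,i+3\}$, and the nontrivial relations that come out of matching coefficients are $\tva_{i+1} = -q\va_i\,\tva_{i+3}$ and $\tva_i = -q\va_i\,\tva_{i+2}$ (using $\tva_{i+3}=\tva_{i+2}$ and $\tva_{i+1}=\tva_i$), both of which are immediate from the second clause of \cref{kowloon}. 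The case $i=0$ is the same, with the extra $(-1)^{\delta_{i,0}}$ factor in the formula for $F_0$ on $V^-$ being absorbed into the sign already present in $\tva_0 = -q\va_0$.

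The only real obstacle is bookkeeping: several indices shift by $\pm 1$, $\pm 2$, or $\pm 3$ under $B_i$, and one must track the involution $\phi$ simultaneously. Nothing is deep — the scalars $\tva_i$ in \cref{kowloon} were evidently reverse-engineered from exactly these compatibility constraints — but the case split must be performed carefully to confirm that every nonzero matrix coefficient on the two sides of $\varphi B_i = B_i \varphi$ agrees. Once this is done, together with the earlier verifications for $\sigma$ and the $I_\bu$-generators, $\varphi$ is a $\Uis$-module map, and the inverse formula has already been recorded, completing the proof.
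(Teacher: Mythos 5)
Your overall approach is exactly the paper's: the proof there is a direct generator-by-generator verification using \cref{sloth,panda+,panda-}, with the recursions built into \cref{kowloon} doing all the work, so the strategy is sound and the outline complete.

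Two pieces of your bookkeeping are wrong as written, however, and would derail the computation if followed literally. First, for $j \in I_\bu$ it is \emph{not} true that $K_j$ has "identical eigenvalue on $v_i^+$ and $v_i^-$": by \cref{panda+,panda-} the exponents are $(-1)^{\delta_{i\le 0}}(\delta_{j,i}-\delta_{j+1,i})$ and $(-1)^{\delta_{i>0}}(\delta_{j,i}-\delta_{j+1,i})$, which are opposite whenever they are nonzero. What you actually need (and what is true) is that the eigenvalue on $v_i^-$ equals that on $v_{\phi(i)}^+$; the sign flip is compensated by $\phi$ exchanging $j$ and $j+1$. Second, in the case $i<0$ even you have the involution backwards: by \cref{pump}, $\phi$ pairs $i$ with $i-1$ (and $i+1$ with $i+2$), not $i$ with $i+1$, so the identities "$\tva_{i+3}=\tva_{i+2}$ and $\tva_{i+1}=\tva_i$" are false in general. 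The correct ones are $\tva_{i+1}=\tva_{i+2}$ and $\tva_{i-1}=\tva_i$, and the single recursion that makes both nonzero matrix coefficients of $B_i$ match is $\tva_i = -q\va_i\,\tva_{i+2}$ (equivalently $\tva_i=-q\va_i\,\tva_{i+1}$), which is indeed immediate from \cref{kowloon}. With these corrections the computation closes exactly as you describe, matching the paper's (unpublished) detailed verification.
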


\begin{proof}
    This follows from a direct computation using \cref{sloth}.
\end{proof}

\details{
    We must show that $\varphi$ intertwines the actions of the generators \cref{tubeless} of $\Ui$.  For $i>0$, we have
    \begin{multline*}
        \varphi(B_i v_j^-)
        \overset{\cref{sloth}}{=} \delta_{i,j} q \va_i \varphi(v_{j+1}^-) + \delta_{i+1,j} \varphi(v_{j-1}^-)
        \overset{\cref{varphidef}}{=} \delta_{i,j} \tva_j v_{j+1}^+ + \delta_{i+1,j} \tva_{j-1} v_{j-1}^+
        \\
        = \tva_j \left( \delta_{i,j} v_{j+1}^+ + \delta_{i+1,j} q \va_{j-1} v_{j-1}^+ \right)
        \overset{\cref{sloth}}{=} \tva_j B_i v_j^+
        \overset{\cref{varphidef}}{=} B_i \varphi(v_j^-).
    \end{multline*}
    Next, we compute
    \begin{multline*}
        \varphi(B_0 v_j^-)
        \overset{\cref{sloth}}{=} - \delta_{j,1} \varphi(v_0^-) - \delta_{j,-1} q \va_0 \varphi(v_1^-)
        \overset{\cref{varphidef}}{=} - \delta_{j,1} \tva_0 v_{-1}^+ - \delta_{j,-1} q \va_0 v_1^+
        \\
        = \delta_{j,1} B_0 v_1^+ - \delta_{j,-1} q \va_0 B_0 v_0^+
        = \delta_{j,1} B_0 \varphi(v_j^-) + \delta_{j,-1} B_0 \varphi(v_j^-)
        = B_0(\varphi(v_j^-)).
    \end{multline*}
    Now, for $1-n \le k \le -1$, we have
    \begin{multline*}
        \varphi(B_{2k}v_j^-)
        \overset{\cref{sloth}}{=} \varphi \big( \delta_{2k+1,j} v_{j-1}^- + \delta_{2k-1,j} \tva_{2k} v_{j+3}^- \big)
        \overset{\cref{varphidef}}{=} \delta_{2k+1,j} \tva_{2k} v_{j-2}^+ + \delta_{2k-1,j} \tva_{2k} v_{j+2}^+
        \\
        \overset{\cref{sloth}}{=} \delta_{2k+1,j} \tva_j B_{2k} v_{j+1}^+ + \delta_{2k-1,j} \tva_j B_{2k} v_{j+1}^+
        \overset{\cref{varphidef}}{=} B_{2k}\varphi(v_j^-).
    \end{multline*}
    For $-n \le k \le -1$, we have
    \begin{gather*}
        \varphi(E_{2k+1} v_j^-)
        \overset{\cref{panda-}}{=} \delta_{2k+1,j} \varphi(v_{j+1}^-)
        \overset{\cref{varphidef}}{=} \delta_{2k+1,j} \tva_{j+1} v_j^+
        \overset{\cref{panda+}}{=} \tva_j E_{2k+1} v_{j+1}^+
        = E_{2k+1} \varphi(v_j^-),
        \\
        \varphi(F_{2k+1} v_j^-)
        \overset{\cref{panda-}}{=} \delta_{2k+2,j} \varphi(v_{j-1}^-)
        \overset{\cref{varphidef}}{=} \delta_{2k+2,j} \tva_{j-1} v_j^+
        \overset{\cref{panda+}}{=} \tva_j F_{2k+1} v_{j-1}^+
        = F_{2k+1} \varphi(v_j^-),
     \end{gather*}
     where we have used the fact that $\tva_{2k+2} = \tva_{2k+1}$.  Finally,
     \begin{multline*}
        \varphi(K_{2k+1} v_j^-)
        \overset{\cref{panda-}}{=} \varphi \left( \delta_{2k+1,j} q v_j^- + \delta_{2k+2,j} q^{-1} v_j^- + \delta_{j \ne 2k+1,2k+2} v_j^- \right)
        \\
        \overset{\cref{varphidef}}{=} \tva_j \left( \delta_{2k+1,j} q v_{j+1}^+ + \delta_{2k+2,j} q^{-1} v_{j-1}^+ + \delta_{j \ne 2k+1,2k+2} v_{\phi(j)}^+ \right)
        \\
        \overset{\cref{panda+}}{=} \tva_j \left( \delta_{2k+1,j} K_{2k+1} v_{j+1}^+ + \delta_{2k+2,j} K_{2k+1} v_{j-1}^+ + \delta_{j \ne 2k+1,2k+2} K_{2k+1} v_{\phi(j)}^+ \right)
        \overset{\cref{varphidef}}{=} K_{2k+1} \varphi(v_j^-).
    \end{multline*}

    It is clear that $\varphi$ intertwines the action of $\sigma$.
}

\begin{lem}
    The $\Uis$-modules $V^+$ and $V^-$ are simple.
\end{lem}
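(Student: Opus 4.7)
The plan is to show that $V^+$ is simple; $V^-$ is then simple by \cref{varphi}. Let $W \subseteq V^+$ be a nonzero $\Uis$-submodule, and I would aim to show $W = V^+$ in two steps.

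The first step is to show that $W$ contains at least one basis vector $v_j^+$. Consider the commutative subalgebra $A$ of $\Uis$ generated by $\sigma$ together with the elements $K_i^{\pm 1}$ for $i \in I_\bu$. By \cref{panda+} and \cref{sigma}, $A$ acts diagonally on $V^+$ in the basis $\{v_j^+\}$, and a direct computation of the eigenvalues shows that every joint eigenspace is one-dimensional, with the sole possible exception of $V^+_\circ := \Span\{v_j^+ : 1 \le j \le m-1\}$, on which every element of $A$ acts as the identity. Since $W$ decomposes as a direct sum of its intersections with the joint eigenspaces, either $W$ meets a one-dimensional eigenspace (and we have a basis vector in $W$ immediately) or $W \cap V^+_\circ \ne 0$. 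In the latter case I would pick a nonzero $u = \sum_{j=1}^{m-1} c_j v_j^+ \in W$, let $j_0$ be the largest $j$ with $c_j \ne 0$, and apply $B_{j_0} \in \Uis$; by \cref{sloth}, $B_{j_0}$ is only nonzero on $v_{j_0}^+$ and $v_{j_0+1}^+$, so $B_{j_0} v_j^+ = 0$ for all $j < j_0$, and hence $B_{j_0} u = c_{j_0} v_{j_0+1}^+ \in W$.

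The second step is to show that the $\Uis$-submodule generated by any single $v_j^+$ is all of $V^+$. This reduces to verifying that the directed graph on $\Vset$ with an edge $j \to j'$ whenever some generator of $\Uis$ sends $v_j^+$ to a nonzero scalar multiple of $v_{j'}^+$ is strongly connected. Reading the edges off from \cref{panda+} and \cref{sloth} gives: $j \leftrightarrow j+1$ for $j \in I_\bu$ (via $E_j$ and $F_j$); $j \to j+1$ together with $j+1 \to j-1$ for each $j \in I_\circ$ with $1 \le j \le m-1$ (via $B_j$); $0 \to 1$ together with $1 \to -1$ when $0 \in I_\circ$ (via $B_0$); and $i \to i+1$ together with $i+2 \to i-1$ for each even $i \in I_\circ$ with $i < 0$ (via $B_i$). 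The main obstacle is the bookkeeping required to verify strong connectivity: the $B_{2k}$ for $k < 0$ jump indices by three, sending $v_{2k+2}^+$ to a scalar multiple of $v_{2k-1}^+$, and one must interleave these ``long'' edges with the adjacent-index moves coming from $E_j, F_j$ for $j \in I_\bu$ in order to reach every pair of basis vectors. A direct walk through the chain $1-2n, 2-2n, \ldots, m$ confirms this, with minor adjustments in the extremal cases $m \le 1$ or $n \in \{0,1\}$. Combining the two steps yields $W = V^+$, establishing the simplicity of $V^+$.
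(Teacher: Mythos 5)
Your proof is correct, but it takes a genuinely different route from the paper's. The paper's argument is a single sweep: it notes that $v_m^+$ generates $V^+$, takes an arbitrary nonzero $v = v_i^+ + \sum_{j>i} c_j v_j^+$ in the submodule, and applies the explicit string of operators $B_{m-1} \dotsm B_i$ (with $(1-\sigma)$ or $B_{-1}$ handling the boundary case $i = m-1$) to produce $v_m^+$ directly, using the triangularity visible in \cref{sloth}. You instead split the work into (i) extracting a single basis vector via the joint eigenspace decomposition for the commutative subalgebra generated by $\sigma$ and the $K_i^{\pm 1}$, $i \in I_\bu$ — which is a nice observation, since those eigenspaces are all one-dimensional except for $\Span\{v_j^+ : 1 \le j \le m-1\}$, where a single application of $B_{j_0}$ finishes the job — and (ii) a strong-connectivity argument showing any single $v_j^+$ generates. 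Your approach is more systematic and arguably more robust (it does not depend on finding one clever word in the $B_i$'s), at the cost of the connectivity bookkeeping; the paper's is shorter but leans on an unproved assertion that $v_m^+$ generates, which your step (ii) in effect supplies. One small slip: the edge contributed by $B_j$ for $1 \le j \le m-1$ acting on $v_{j+1}^+$ is $j+1 \to j$ (since $B_j v_{j+1}^+ = q\va_j v_j^+$ by \cref{sloth}), not $j+1 \to j-1$ as you wrote; with the corrected edge the graph is indeed strongly connected, so the argument goes through.
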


\begin{proof}
    By \cref{varphi}, it suffices to show that $V^+$ is simple.  Suppose $M$ is a nonzero submodule of $V^+$.  Since $V^+$ is generated by $v_m^+$, to show that $M=V^+$, it suffices to show that $v_m^+ \in M$.  Let $v$ be a nonzero element of $M$.  If $v=v_m^+$, we are done.  Otherwise, multiplying by a nonzero scalar if necessary, we may assume
    \[
        v = v_i^+ + \sum_{j>i} c_j v_j^+
    \]
    for some $i \in \Vset$, $i<m$, and $c_j \in \kk$.  If $i<m-1$, then, by \cref{sloth}, we have
    \[
        B_{m-1} B_{m-2} \dotsm B_{i+1} B_i v = v_m^+,
    \]
    where we interpret $B_{2k-1}$ as $F_{2k-1}$ for $k \le 0$.  On the other hand, if $i=m-1$, then $v = v_{m-1}^+ + c_m v_m^+$.  First suppose $c_m \ne 0$.  If $m >0$, we have
    \[
        (1-\sigma)v = 2 v_m^+.
    \]
    If $m=0$, we have
    \[
        B_{-1} v
        = v_0^+.
    \]
    Finally, if $c_m = 0$, then $B_{m-1} v = B_{m-1} v_{m-1}^+ = v_m^+$.  In all cases, we have shown that $v_m^+ \in M$.
\end{proof}

\section{Incarnation functors\label{sec:incar}}

In this section we relate the diagrammatic categories of \cref{sec:diagrammcats} to the representation theory of $\Ui$.  Throughout this section, we work over the field $\kk = \C(q)$.

\subsection{The oriented incarnation functor}

The following proposition is known to experts.  However, since we could not locate a proof in the literature, we have included one.

\begin{prop} \label{oincarnate}
    There is a unique $\kk$-linear monoidal functor $\bR_\OS \colon \OS(q,q^{m-n}) \to \Us\tmod$ given on objects by $\upobj \mapsto V^+$, $\downobj \mapsto V^-$, and on morphisms by
    \[
        \posupcross \mapsto T_{++},\quad
        \negupcross \mapsto T_{++}^{-1},\quad
        \rightcap \mapsto \ev^+,\quad
        \rightcup \mapsto \coev_+,\quad
        \leftcap \mapsto \ev^-,\quad
        \leftcup \mapsto \coev_-.
    \]
\end{prop}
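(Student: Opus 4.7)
\emph{Uniqueness} is immediate: the objects $\upobj, \downobj$ together with the six listed morphisms generate $\OS(q, q^{m-n})$ as a $\kk$-linear monoidal category, so the values of a monoidal functor on all other structural morphisms (such as those in \cref{lego,windmill}) are forced by monoidality and $\kk$-linearity. For \emph{existence}, I would verify that the prescribed assignment respects each of the defining relations \cref{obraid,oskein,ocurlbub,adjunction} inside $\Us\tmod$.

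Two groups of relations are comparatively routine. The adjunction relations \cref{adjunction} are the zig-zag identities for the two dualities $(V^+,V^-,\ev^+,\coev_+)$ and $(V^-,V^+,\ev^-,\coev_-)$, and follow directly from the dual-basis definition \cref{button} together with \cref{ev+,ev-,coev}. Granted the adjunctions, the equalities in \cref{obraid} beyond the tautological $T_{++}T_{++}^{-1}=\id$ are consequences of the hexagon/naturality axioms of the universal-$R$-matrix braiding on $\Us\tmod$, applied to the composite definitions \cref{lego,windmill} of the mixed-orientation crossings. The skein relation \cref{oskein} is a direct case analysis via \cref{Vbraid,Vbraidinv}: for $i\ne j$ we have $p(i,j)=0$, so the transposition terms cancel, while the extra terms $\delta_{i<j}(q-q^{-1})$ and $-(-\delta_{i>j})(q-q^{-1})$ combine to $(q-q^{-1})$; for $i=j$ the transposition contribution $(-1)^{p(i)}(q^{p(i,i)}-q^{-p(i,i)})$ collapses to $q-q^{-1}$ regardless of the parity of $i$.

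The main obstacle will be the curl-and-bubble relations \cref{ocurlbub}. The key observation is that $V^+$ is a simple $\Us$-module, so $\End_{\Us}(V^+) = \kk$ and each of the four curl endomorphisms of $V^+$ is determined by its value on a single convenient vector, for which $v_m^+$ is natural. Tracing $v_m^+$ through the appropriate composite (for instance $(\id\otimes\ev^+)\circ(T_{++}\otimes\id)\circ(\id\otimes\coev_+)$ for the first curl) and expanding $\coev_+$ in the dual basis \cref{button,coev}, only the summand containing $v_m^+\otimes v_m^-\otimes (v_m^-)^\vee$ survives after applying the identity factors of $T_{++}$ and $\ev^+$; collecting powers of $q$ using \cref{Vbraid,Vbraidinv} produces the required scalar $t=q^{m-n}$ (or $t^{-1}$ for the curls with opposite sign). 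The bubble is then a direct computation: expanding $\ev^+\circ\coev_-$ along the basis $\{v_i^+\}_{i\in\Vset}$ via \cref{button,ev+,coev} gives $\sum_{i\in\Vset} (-1)^{i+p(i)} q^{|i-1|-m-n}(-q)^{|i|}$, and splitting according to the sign of $i$ and telescoping the two geometric sums collapses this expression to the quantum super-integer $[m-n]=(t-t^{-1})/(q-q^{-1})$, completing the verification.
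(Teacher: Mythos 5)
Your proposal is correct and follows essentially the same route as the paper: reduce \cref{obraid} to naturality of the braiding and the adjunctions, check \cref{oskein} directly from \cref{Vbraid,Vbraidinv}, use simplicity of $V^+$ to pin down each curl scalar by evaluating on a single vector, and collapse a telescoping sum for the bubble. One small slip worth noting: your sample curl composite should use $\coev_-$ rather than $\coev_+$ (otherwise $T_{++}\otimes\id$ does not typecheck, since $\coev_+$ lands in $V^-\otimes V^+$), and for the curl built from $\coev_+$ and $\ev^-$ the convenient test vector is $v_{1-n}^+$ rather than $v_m^+$, so that the extra skein term of $T_{++}$ vanishes.
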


\begin{proof}
    We must show that $\bR_\OS$ respects the relations \cref{obraid,oskein,ocurlbub,adjunction}.  The first three equalities 
    in \cref{obraid}
        follow immediately from the fact that $T_{++}$ is the action of the braiding on $V^+ \otimes V^+$.  Furthermore, it follows from \cref{lego} that $\bR_\OS(\posrightcross)$ and $\bR_\OS(\negleftcross)$ are the components of the braiding on $V^+ \otimes V^-$ and $V^- \otimes V^+$, respectively.  Indeed, if $X$ and $Y$ are objects in a rigid braided monoidal category, and $X^*$ is the dual of $X$, then naturality of the braiding implies that
    \[
        \begin{tikzpicture}[anchorbase,scale=1.3]
            \draw (0.3,-0.4) \botlabel{Y} to[out=up,in=down] (-0.3,0.4);
            \draw[wipe] (-0.3,-0.4) to[out=up,in=down] (0,0) arc(180:0:0.15) to[out=down,in=up] (0,-0.4);
            \draw (-0.3,-0.4) \botlabel{X} to[out=up,in=down] (0,0) arc(180:0:0.15) to[out=down,in=up] (0,-0.4) \botlabel{X^*};
        \end{tikzpicture}
        =
        \begin{tikzpicture}[anchorbase,scale=1.3]
            \draw (-0.3,-0.4) \botlabel{X} -- (-0.3,-0.2) arc(180:0:0.15) -- (0,-0.4) \botlabel{X^*};
            \draw (0.3,-0.4) \botlabel{Y} -- (0.3,0.4);
        \end{tikzpicture}
    \]
    Tensoring on the left with $X^*$, then composing on the bottom with
    \[
        \begin{tikzpicture}[scale=1.3]
            \draw (-0.3,0.4) \toplabel{X^*} -- (-0.3,0.2) arc(180:360:0.15) -- (0,0.4) \toplabel{X};
            \draw (0.3,-0.2) -- (0.6,0.4) \toplabel{Y};
            \draw[wipe] (0.6,-0.2) -- (0.3,0.4);
            \draw (0.6,-0.2) -- (0.3,0.4) \toplabel{X^*};
        \end{tikzpicture}
    \]
    and using the adjunction relation, gives
    \[
        \begin{tikzpicture}[centerzero]
            \draw (-0.3,-0.3) \botlabel{Y} -- (0.3,0.3);
            \draw[wipe] (0.3,-0.3) -- (-0.3,0.3);
            \draw (0.3,-0.3) \botlabel{X^*} -- (-0.3,0.3);
        \end{tikzpicture}
        =
        \begin{tikzpicture}[centerzero,xscale=-1.4]
            \draw (-0.1,-0.3) \botlabel{Y} \braidup (0.1,0.3);
            \draw[wipe] (-0.2,0.2)  \braidup (0.2,-0.2);
            \draw (0.4,0.3) -- (0.4,0.1) to[out=down,in=right] (0.2,-0.2) to[out=left,in=right] (-0.2,0.2) to[out=left,in=up] (-0.4,-0.1) -- (-0.4,-0.3) \botlabel{X^*};
        \end{tikzpicture}.
    \]
    This shows that $\bR_\OS(\posrightcross)$ is the braiding on $V^+ \otimes V^-$.  The argument for $\bR_\OS(\negleftcross)$ is analogous.  Then the last two equalities in \cref{obraid} follow from the properties of a braided monoidal category.

    The fact that $\bR_\OS$ respects \cref{oskein} follows easily from \cref{Vbraid,Vbraidinv}.
    \details{
        For $i \in \Vset$, we have
        \begin{multline*}
            \bR_\OS \left( \posupcross - \negupcross \right)(v_i^+ \otimes v_i^+)
            = (T_{++} - T_{++}^{-1})(v_i^+ \otimes v_i^+)
            \\
            = (-1)^{p(i)} ( q^{1-2p(i)} - q^{2p(i)-1} ) v_i^+ \otimes v_i^+
            = (q-q^{-1}) v_i^+ \otimes v_i^+
            = (q-q^{-1}) \bR_\OS
            \left(
                \begin{tikzpicture}[centerzero]
                    \draw[->] (-0.15,-0.2) -- (-0.15,0.2);
                    \draw[->] (0.15,-0.2) -- (0.15,0.2);
                \end{tikzpicture}
            \right)
            (v_i^+ \otimes v_i^+).
        \end{multline*}
        That
        \(
            \bR_\OS \left( \posupcross - \negupcross \right)(v_i^+ \otimes v_j^+) = (q-q^{-1}) \bR_\OS
            \left(
                \begin{tikzpicture}[centerzero]
                    \draw[->] (-0.15,-0.2) -- (-0.15,0.2);
                    \draw[->] (0.15,-0.2) -- (0.15,0.2);
                \end{tikzpicture}
            \right)
            (v_i^+ \otimes v_j^+)
        \)
        for $i \ne j$ is immediate.
    }
    For the first relation in \cref{ocurlbub}, first note that, since $V^+$ is simple, we have $\dim_\kk \End_\Ualg(V^+) = 1$.  Thus, there exists $c \in \kk$ such that
    \begin{equation} \label{shawarma}
        \bR_\OS
        \left(
            \begin{tikzpicture}[centerzero,xscale=-1]
                \draw (0,-0.4) to[out=up,in=180] (0.25,0.15) to[out=0,in=up] (0.4,0);
                \draw[wipe] (0.25,-0.15) to[out=180,in=down] (0,0.4);
                \draw[->] (0.4,0) to[out=down,in=0] (0.25,-0.15) to[out=180,in=down] (0,0.4);
            \end{tikzpicture}
        \right)
        = c 1_{V^+}
        = c\,\bR_\OS
        \left(
            \begin{tikzpicture}[centerzero]
                \draw[->] (0,-0.4) -- (0,0.4);
            \end{tikzpicture}
        \right).
    \end{equation}
    Using \cref{Vbraid,coev,ev-,button}, we compute that the action of the left-hand side of \cref{shawarma} on $v_{1-n}^+$ is given by
    \begin{align*}
        v_{1-n}^+
        &\xmapsto{\coev_+ \otimes 1} \sum_{i \in \Vset} (-1)^{i} q^{-|i|} v_i^- \otimes v_i^+ \otimes v_{1-n}^+
        \\
        &\xmapsto{1 \otimes T_{++}} \sum_{i > 1-n} (-1)^{i+p(i)p(1-n)} q^{-|i|} v_i^- \otimes v_{1-n}^+ \otimes v_i^+ \\
        &\qquad \qquad \qquad
        + (-1)^{1-n+p(1-n)} q^{-|1-n|+1-2p(1-n)} v_{1-n}^- \otimes v_{1-n}^+ \otimes v_{1-n}^+
        \\
        &\xmapsto{\ev^- \otimes 1} q^{m-|1-n|+1-2p(1-n)} v_j^+
        = q^{m-n} v_j^+.
    \end{align*}
    Thus, $c=q^{m-n}$, as desired.
    \details{
        If $n>0$, the exponent of $q$ is
        \[
            m-(n-1)+1-2 = m-n.
        \]
        If $n=0$, the exponent of $q$ is
        \[
            m-1+1 = m.
        \]
    }
    The proof that $\bR_\OS$ respects the second equality in \cref{ocurlbub} is similar.
    \details{
        We have
        \begin{align*}
            v_m^+
            &\xmapsto{1 \otimes \coev_-} \sum_{j \in \Vset} (-1)^{j+p(j)} q^{|j-1|-m-n} v_m^+ \otimes v_j^+ \otimes v_j^- \\
            &\xmapsto{T_{++} \otimes 1} \sum_{j<m} (-1)^{j+p(j)+p(m)p(j)} q^{|j-1|-m-n} v_j^+ \otimes v_m^+ \otimes v_j^- \\
            &\qquad \qquad \qquad
            + (-1)^m q^{|m-1|-m-n+1-2p(m)} v_m^+ \otimes v_m^+ \otimes v_m^- \\
            &\xmapsto{1 \otimes \ev^+} q^{|m-1|-n+1-2p(m)} v_m^+
            = q^{m-n} v_m^+.
        \end{align*}
        Above, we used the fact that, if $m>0$, the exponent of $q$ is
        \[
            m-1-n+1 = m-n,
        \]
        and, if $m=0$, the exponent of $q$ is
        \[
            1-n+1-2=-n.
        \]
    }

    To show that $\bR_\OS$ respects the last relation in \cref{ocurlbub}, we compute that
    \begin{multline*}
        \bR_\OS \left( \rightbub \right) \colon 1
        \xmapsto{\coev_-} \sum_{i \in \Vset} (-1)^{i+p(i)} q^{|i-1|-m-n} v_i^+ \otimes v_i^-
        \\
        \xmapsto{\ev^+} \sum_{i \in \Vset} (-1)^{p(i)} q^{|i|+|i-1|-m-n}
        = \frac{q^{m-n}-q^{n-m}}{q-q^{-1}},
    \end{multline*}
    as desired.

    The verification that $\bR_\OS$ respects \cref{adjunction} is a standard direct verification.
\end{proof}

The following result will be used in the proof of \cref{disincarnate}.

\begin{lem} \label{rodent}
    We have
    \begin{gather*}
        \bR_\OS(\negrightcross) (v_i^+ \otimes v_j^-)
        = (-1)^{p(i)p(j)} q^{-p(i,j)} v_j^- \otimes v_i^+ - \delta_{i=j} (q-q^{-1}) \sum_{k>i} (-q)^{|i|-|k|} v_k^- \otimes v_k^+,
        \\
        \bR_\OS(\posdowncross) (v_i^- \otimes v_j^-)
        = (-1)^{p(i)p(j)} q^{p(i,j)} v_j^- \otimes v_i^- + \delta_{i>j} (q-q^{-1}) v_i^- \otimes v_j^-.
    \end{gather*}
\end{lem}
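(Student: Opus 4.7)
The plan is to reduce each identity to a direct computation on basis vectors by expanding the mixed crossings via \cref{windmill} into a composition of the basic generators, whose images under $\bR_\OS$ are already known explicitly.

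First I would treat $\bR_\OS(\negrightcross)$. From \cref{windmill}, $\negrightcross\colon \upobj\otimes\downobj \to \downobj\otimes\upobj$ factors as
\[
    (\leftcap \otimes 1_{\upobj} \otimes 1_{\downobj} \otimes 1_{\upobj})
    \circ (1_{\downobj} \otimes 1_{\upobj} \otimes \negupcross \otimes 1_{\downobj} \otimes 1_{\upobj})
    \circ (1_{\downobj} \otimes 1_{\upobj} \otimes \rightcup),
\]
after suitable repositioning, which under $\bR_\OS$ becomes a composite built from $\ev^-$, $T_{++}^{-1}$, and $\coev_+$. Applying this composite to $v_i^+\otimes v_j^-$, I would first introduce $\coev_+$ via \cref{coev} and \cref{button} to obtain $\sum_{k\in\Vset}(-q)^{-|k|}\, v_i^+\otimes v_j^-\otimes v_k^-\otimes v_k^+$, then apply $T_{++}^{-1}$ in the middle factors $v_i^+\otimes v_k^+$ using \cref{Vbraidinv}, picking up the parity sign $(-1)^{p(i)p(k)}$ coming from moving $v_j^-$ past $v_i^+$ and back, and finally close the loop with $\ev^-$ from \cref{ev-}. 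The coefficient of $v_j^-\otimes v_i^+$ on the right-hand side collects the ``straight-through'' term with $k=i$, giving $(-1)^{p(i)p(j)}q^{-p(i,j)}$, while the ``correction'' sum over $k>i$ produced by the $\delta_{i>j}(q-q^{-1})$ piece of \cref{Vbraidinv} combines with the pairing constants in \cref{button} and \cref{ev-} to yield the stated $-\delta_{i=j}(q-q^{-1})\sum_{k>i}(-q)^{|i|-|k|}v_k^-\otimes v_k^+$.

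For $\bR_\OS(\posdowncross)$ the approach is analogous: by \cref{windmill} the morphism $\posdowncross\colon \downobj\otimes\downobj\to\downobj\otimes\downobj$ factors through $\posupcross$ conjugated by a pair of cups and caps, so under $\bR_\OS$ I compose $\coev_+\otimes\coev_+$ at the top, $T_{++}$ on the middle two $V^+$ factors using \cref{Vbraid}, and $\ev^+\otimes\ev^+$ at the bottom, evaluated on $v_i^-\otimes v_j^-$. Tracking the constants from \cref{button} and the parity signs coming from the graded swap of a $V^-$ past a $V^+$ factor, the main braiding term $(-1)^{p(i)p(j)}q^{p(i,j)}v_j^-\otimes v_i^-$ arises directly, and the skein-correction $\delta_{i>j}(q-q^{-1})$ piece of \cref{Vbraid} survives the double evaluation on the same index to produce the stated correction term.

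The expected obstacle is not conceptual but bookkeeping: one must carefully track (i) the super signs $(-1)^{p(i)p(j)}$ generated each time one graded-commutes a $V^-$ vector past a $V^+$ vector through the tensor product, (ii) the dual-basis constants from \cref{button} when applying $\coev_\pm$ and $\ev^\mp$, and (iii) the inequality in the Heaviside term $\delta_{i<j}$ of \cref{Vbraid} versus $\delta_{i>j}$ of \cref{Vbraidinv}, which reverses after transporting through the cups and caps in \cref{windmill} and is responsible for the sign difference between the two identities. Once the conventions are fixed, each computation is a finite sum over $k\in\Vset$ in which only the terms flagged above survive, yielding the claimed formulas.
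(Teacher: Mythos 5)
Your overall strategy---unpacking the mixed crossings via \cref{windmill} into cup--crossing--cap composites and pushing basis vectors through using \cref{coev}, \cref{button}, \cref{Vbraid}, \cref{Vbraidinv}, and \cref{ev+}, \cref{ev-}---is exactly what the paper does. The problem is in the execution. The factorization you write for $\negrightcross$ does not typecheck: the three factors have mismatched domains and codomains, and the outermost map would require domain $\downobj\otimes\upobj$ rather than $\upobj\otimes\downobj$. The computation you then describe is not a composite of morphisms in the monoidal category: after appending the cup on the right to get $\sum_k(-q)^{-|k|}\,v_i^+\otimes v_j^-\otimes v_k^-\otimes v_k^+$, you propose to apply $T_{++}^{-1}$ to ``the middle factors $v_i^+\otimes v_k^+$,'' but those are the first and fourth factors, so no map of the form $1\otimes T_{++}^{-1}\otimes 1$ does this. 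Reading the diagram in \cref{windmill} correctly, the cup sits on the \emph{left} and the cap on the \emph{right}:
\[
\negrightcross \;=\; (1_{\downobj\upobj}\otimes\rightcap)\circ(1_{\downobj}\otimes\negupcross\otimes 1_{\downobj})\circ(\rightcup\otimes 1_{\upobj\downobj}),
\]
so under $\bR_\OS$ one inserts $\coev_+$ on the left, producing $\sum_k(-q)^{-|k|}\,v_k^-\otimes v_k^+\otimes v_i^+\otimes v_j^-$, applies $T_{++}^{-1}$ to the \emph{adjacent} factors $v_k^+\otimes v_i^+$ via \cref{Vbraidinv}, and contracts the last two factors with $\ev^+$ (not $\ev^-$). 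With that correction the computation closes up and yields the stated formula.

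Two further points. First, the sign $(-1)^{p(i)p(j)}$ does not arise from ``graded-commuting a $V^-$ vector past a $V^+$ vector'': all of $T_{++}^{\pm1}$, $\ev^\pm$, $\coev_\pm$ are even, so applying $1\otimes f$ or $f\otimes 1$ produces no Koszul signs. The sign is already present in \cref{Vbraidinv} as $(-1)^{p(k)p(i)}$ and becomes $(-1)^{p(i)p(j)}$ because $\ev^+$ forces $k=j$ in the leading term. Second, for $\posdowncross$ your one-shot conjugation by $\coev_+\otimes\coev_+$ and $\ev^+\otimes\ev^+$ is left unspecified, and the nesting of the two cups and caps is exactly where such an argument goes wrong. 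The cleaner route (and the paper's) is to iterate the single rotation: first compute $\bR_\OS(\posrightcross)$ on $V^+\otimes V^-$ by the same cup--crossing--cap argument with $T_{++}$, then compute $\bR_\OS(\posdowncross)$ by inserting $\coev_+$ on the left of $v_i^-\otimes v_j^-$, applying the already-computed $\bR_\OS(\posrightcross)$ to the adjacent middle factors, and closing with $\ev^+$; every step then involves only adjacent tensor factors.
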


\begin{proof}
    Using \cref{windmill,button}, we compute
    \begin{align*}
        \bR_\OS(\negrightcross)
        &\colon v_i^+ \otimes v_j^-
        \xmapsto{\coev_+ \otimes 1 \otimes 1} \sum_{k \in \Vset} (-q)^{-|k|} v_k^- \otimes v_k^+ \otimes v_i^+ \otimes v_j^-
        \\
        &\xmapsto{1 \otimes T_{++}^{-1} \otimes 1} \sum_{k \in \Vset} (-q)^{-|k|} v_k^- \otimes \big( (-1)^{p(k)p(i)} q^{-p(k,i)} v_i^+ \otimes v_k^+ - \delta_{k>i} (q-q^{-1}) v_k^+ \otimes v_i^+ \big) \otimes v_j^-
        \\
        &\xmapsto{1 \otimes 1 \otimes \ev^+} (-1)^{p(i)p(j)} q^{-p(i,j)} v_j^- \otimes v_i^+ - \delta_{i=j} (q-q^{-1}) \sum_{k>i} (-q)^{|i|-|k|} v_k^- \otimes v_k^+.
    \end{align*}
    The second equation is proved similarly.
    \details{
        We compute
        \begin{align*}
            \bR_\OS(\posrightcross)
            &\colon v_i^+ \otimes v_j^-
            \xmapsto{\coev_+ \otimes 1 \otimes 1} \sum_{k \in \Vset} (-q)^{-|k|} v_k^- \otimes v_k^+ \otimes v_i^+ \otimes v_j^-
            \\
            &\xmapsto{1 \otimes T_{++} \otimes 1} \sum_{k \in \Vset} (-q)^{-|k|} v_k^- \otimes \big( (-1)^{p(k)p(i)} q^{p(k,i)} v_i^+ \otimes v_k^+ + \delta_{k<i} (q-q^{-1}) v_k^+ \otimes v_i^+ \big) \otimes v_j^-
            \\
            &\xmapsto{1 \otimes 1 \otimes \ev^+} (-1)^{p(i)p(j)} q^{p(i,j)} v_j^- \otimes v_i^+ + \delta_{i=j} (q-q^{-1}) \sum_{k<i} (-q)^{|i|-|k|} v_k^- \otimes v_k^+.
        \end{align*}
        Therefore, we have
        \begin{align*}
            \bR_\OS(\posdowncross)
            &\colon v_i^- \otimes v_j^-
            \xmapsto{\coev_+ \otimes 1 \otimes 1} \sum_{k \in \Vset} (-q)^{-|k|} v_k^- \otimes v_k^+ \otimes v_i^- \otimes v_j^-
            \\
            &\mapsto \sum_{k \in \Vset} (-q)^{-|k|} v_k^- \otimes \left( (-1)^{p(k)p(i)} q^{p(k,i)} v_i^- \otimes v_k^+ + \delta_{k=i} (q-q^{-1}) \sum_{l<k} (-q)^{|k|-|l|} v_l^- \otimes v_l^+ \right) \otimes v_j^-
            \\
            &\xmapsto{1 \otimes 1 \otimes \ev^+} (-1)^{p(i)p(j)} q^{p(i,j)} v_j^- \otimes v_i^- + \delta_{i>j} (q-q^{-1})  v_i^- \otimes v_j^-.
        \end{align*}
    }
\end{proof}

As explained in the first paragraph of the proof of \cref{oincarnate}, the $V^+ \otimes V^-$ and $V^- \otimes V^-$ components of the braiding on $\Ualg\tmod$ are
\[
    T_{+-} := \bR_\OS(\negrightcross)
    \qquad \text{and} \qquad
    T_{--} := \bR_\OS(\posdowncross).
\]

\subsection{The disoriented incarnation functor}

The composition of functors
\begin{equation} \label{xenon}
    \Uis\tmod \times \OS(q,q^{m-2n})
    \xrightarrow{\id \times \bR_\OS} \Uis\tmod \times \Us\tmod
    \xrightarrow{\otimes} \Uis\tmod
\end{equation}
endows $\Uis\tmod$ with the structure of a right $\OS(q,q^{m-2n})$-module category.

The following lemma will allow us to simplify an argument in the proof of \cref{disincarnate}.

\begin{lem} \label{generate}
    As a $\Ui$-module (hence, also as a $\Uis$-module), $\Res(V^- \otimes V^-)$ is generated by
    \begin{equation} \label{genset}
        \{v_i^-\otimes v_j^- : \phi(i)\neq j\} \cup \{v_i^-\otimes v_i^-: i>0\}.
    \end{equation}
\end{lem}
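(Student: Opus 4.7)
The plan is to show that the $\Ui$-submodule $M \subseteq \Res(V^- \otimes V^-)$ generated by the set in \cref{genset} contains every ``missing'' basis vector, namely those of the form $v_i^- \otimes v_{\phi(i)}^-$ with $i \le 0$. The involution $\phi$ pairs these indices into $n$ doubletons $\{2k, 2k-1\}$ indexed by $k \in \{0, -1, \ldots, 1-n\}$, and I will use downward induction on $k$ to place both $v_{2k}^- \otimes v_{2k-1}^-$ and $v_{2k-1}^- \otimes v_{2k}^-$ in $M$. The essential tool is the element $B_{2k} \in \Ui$: its $F_{2k}$-summand directly produces the targeted current-pair missing element when applied via $\Delta$ to a suitable generating vector (since $F_{2k} v_{2k+1}^- = \pm v_{2k}^-$ by \cref{panda-}), while its $T$-summand introduces correction terms that (after cancellation) involve only the previous pair.

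For the base case $k = 0$, I would apply $B_0 = F_0 + \va_0 T_{-1}(E_0) K_0^{-1}$ (cf.\ \cref{tubeless,iodine1}) to the generating-set vectors $v_1^- \otimes v_{-1}^-$ and $v_{-1}^- \otimes v_1^-$. A careful coproduct calculation using \cref{panda-} (and keeping track of Koszul signs from $\bar{F_0}=\bar{E_0}=1$) yields
\[
    B_0(v_1^- \otimes v_{-1}^-) = -v_0^- \otimes v_{-1}^- - q\va_0\,v_1^- \otimes v_1^-,\qquad
    B_0(v_{-1}^- \otimes v_1^-) = v_{-1}^- \otimes v_0^- - q^2\va_0\,v_1^- \otimes v_1^-.
\]
Since $v_1^- \otimes v_{-1}^-$, $v_{-1}^- \otimes v_1^-$, and $v_1^- \otimes v_1^-$ all belong to the set in \cref{genset}, rearranging the two equations places $v_0^- \otimes v_{-1}^-$ and $v_{-1}^- \otimes v_0^-$ in $M$.

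For the inductive step $k \le -1$, I will assume the previous pair's missing elements $v_{2k+2}^- \otimes v_{2k+1}^-$ and $v_{2k+1}^- \otimes v_{2k+2}^-$ lie in $M$, and apply $B_{2k} = F_{2k} + \va_{2k} T_{2k+1}T_{2k-1}(E_{2k}) K_{2k}^{-1}$ (cf.\ \cref{tubeless,iodine2}) to the generating-set vectors $v_{2k-1}^- \otimes v_{2k+1}^-$ and $v_{2k+1}^- \otimes v_{2k-1}^-$. The $F_{2k}$-summand again contributes the desired missing current-pair vector (for instance, $\Delta(F_{2k})(v_{2k-1}^- \otimes v_{2k+1}^-)$ yields $\pm v_{2k-1}^- \otimes v_{2k}^-$). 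The main technical obstacle is the contribution of $\Delta(T_{2k+1}T_{2k-1}(E_{2k}))$: expanding via \cref{iodine2} produces four monomials of degree three in the $E_j$'s, each of whose coproduct decomposes into several summands. Fortunately, the very sparse support of the $E_j$'s on $V^-$ (given by $E_j v_i^- = \delta_{i,j} v_{j+1}^-$) annihilates most of the resulting terms, and the survivors—after collecting $K$-eigenvalues using \cref{panda-}—collapse into an explicit scalar combination of the previous-pair vectors $v_{2k+2}^- \otimes v_{2k+1}^-$ and $v_{2k+1}^- \otimes v_{2k+2}^-$. These lie in $M$ by the inductive hypothesis, so rearranging places the current-pair missing vectors in $M$ and completes the induction.
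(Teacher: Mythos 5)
Your proposal is correct and follows the same basic strategy as the paper's proof: induct over the $\phi$-pairs $\{2k,2k-1\}$ for $0 \ge k \ge 1-n$, and use the generator $B_{2k}$ to produce each missing vector modulo vectors already known to lie in the submodule. Two differences in execution are worth noting. First, to obtain the second ordering $v_{2k}^-\otimes v_{2k-1}^-$ the paper instead applies the odd generator $E_{2k+1}$ (resp.\ $E_{-1}$ in the base case) to the diagonal vector $v_{2k+1}^-\otimes v_{2k+1}^-$, which lies in \cref{genset} because $\phi(2k+1)=2k+2\ne 2k+1$; your alternative of applying $B_{2k}$ to the reversed tensor $v_{2k+1}^-\otimes v_{2k-1}^-$ works just as well. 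Second, and more substantively, the ``main technical obstacle'' you identify --- expanding the coproduct of the cubic expression \cref{iodine2} and tracking which terms survive --- can be bypassed entirely: the element $\va_{2k}T_{2k+1}(T_{2k-1}(E_{2k}))K_{2k}^{-1}$ is homogeneous of weight $\epsilon_{2k-1}-\epsilon_{2k+2}$, so its contribution to $B_{2k}$ applied to either of your chosen vectors lands in the weight space $(V^-\otimes V^-)_{-\epsilon_{2k+1}-\epsilon_{2k+2}}$, which is spanned exactly by the two previous-pair vectors $v_{2k+1}^-\otimes v_{2k+2}^-$ and $v_{2k+2}^-\otimes v_{2k+1}^-$. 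This is the argument the paper uses, and it guarantees in advance that your deferred computation collapses as you claim. A minor point: your coefficient $-q^2\va_0$ in the second base-case identity should be $-\va_0$, but since all that matters is that $v_1^-\otimes v_1^-$ lies in \cref{genset}, the discrepancy is immaterial.
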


\begin{proof}
    Let $W$ be the $\Ui$-submodule generated by the given vectors.  By the definition \cref{pump} of $\phi$, it suffices to show that
    \begin{equation} \label{bear}
        v_{-2k}^- \otimes v_{-2k-1}^- \in W
        \quad \text{and} \quad
        v_{-2k-1}^- \otimes v_{-2k}^- \in W
    \end{equation}
    for $0 \le k \le n-1$.  We prove this by induction on $k$.

    By \cref{tubeless,iodine1}, we have
    \[
        B_0 = F_0 + \va_0 (E_{-1}E_0-qE_0E_{-1}) K_0^{-1}.
    \]
    Thus
    \[
        B_0(v_{-1}^-\otimes v_1^-)
        \overset{\cref{comult}}{\underset{\cref{panda-}}{=}} - v_{-1}^-\otimes v_0^- - \va_0 v_1^-\otimes v_1^-.
    \]
    Since $v_{-1}^-\otimes v_1^-,v_1^-\otimes v_1^-\in W$, we conclude that $v_{-1}^-\otimes v_0^-\in W$. On the other hand, we have
    \[
        E_{-1}(v_{-1}^-\otimes v_{-1}^-)
        \overset{\cref{comult}}{=} (E_{-1}\otimes 1+K_{-1}\otimes E_{-1})(v_{-1}^-\otimes v_{-1}^-)=v_0^-\otimes v_{-1}^-+qv_{-1}^-\otimes v_0^-.
    \]
    Since $v_{-1}^-\otimes v_{-1}^-\in W$, we see that $v_0^-\otimes v_{-1}^-\in W$.  Thus, \cref{bear} holds for $k=0$.

    Next suppose that $1 \le k \le n-1$ and that \cref{bear} holds for $k-1$.  Let $\Ualg = \bigoplus_{\alpha \in X} \Ualg_\alpha$ be the usual grading on $\Ualg$ by the root lattice and, for $M \in \Ualg\tmod$ and a weight $\mu$, let $M_\mu$ denote the $\mu$-weight space of $M$.  Note that
    \[
        B_{-2k} \in F_{-2k} + \Ualg_{\epsilon_{-2k-1}-\epsilon_{-2k+2}}.
    \]
    Since
    \[
        F_{-2k}(v_{-2k+1}^-\otimes v_{-2k-1}^-)
        \overset{\cref{comult}}{=} (F_{-2k}\otimes K_{-2k}^{-1} + 1 \otimes F_{-2k})(v_{-2k+1}^- \otimes v_{-2k-1}^-)
        \overset{\cref{panda-}}{=} v_{-2k+1}^-\otimes v_{-2k}^-,
    \]
    we have
    \[
        B_{-2k} (v_{-2k+1}^-\otimes v_{-2k-1}^-)
        \in v_{-2k+1}^-\otimes v_{-2k}^- + (V^- \otimes V^-)_{-\epsilon_{-2k+1}-\epsilon_{-2k+2}}.
    \]
    Because $(V^- \otimes V^-)_{-\epsilon_{-2k+1}-\epsilon_{-2k+2}}$ is spanned by $v_{-2k+1}^- \otimes v_{-2k+2}^-$ and $v_{-2k+2}^- \otimes v_{-2k+1}^-$, both of which lie in $W$ by the inductive hypothesis, we conclude that $v_{-2k+1}^-\otimes v_{-2k-1}^-\in W$.

    Finally, we calculate that
    \begin{multline*}
        E_{-2k+1} (v_{-2k+1}^-\otimes v_{-2k+1}^-)
        \overset{\cref{comult}}{=} (E_{-2k+1}\otimes 1 + K_{-2k+1} \otimes E_{-2k+1})(v_{-2k+1}^-\otimes v_{-2k+1}^-)
        \\
        \overset{\cref{panda-}}{=} v_{-2k}^-\otimes v_{-2k+1}^- + q v_{-2k+1}^- \otimes v_{-2k}^-.
    \end{multline*}
    Since $v_{-2k+1}^- \otimes v_{-2k+1}^- \in W$ and $v_{-2k+1}^- \otimes v_{-2k}^- \in W$, we conclude that $v_{-2k}^- \otimes v_{-2k+1}^- \in W$ as well. Thus, \cref{bear} holds, completing the proof of the induction step.
\end{proof}

\begin{theo} \label{disincarnate}
    There is a unique strict morphism of $\OS(q,q^{m-2n})$-modules $\bR_\DS \colon \DS(q,q^{m-2n}) \to \Uis\tmod$ such that
    \begin{equation} \label{helium}
        \Bobj_0 \mapsto \Res(\triv),\qquad
        \togupdown \mapsto \varphi,\qquad
        \togdownup \mapsto \varphi^{-1}.
    \end{equation}
\end{theo}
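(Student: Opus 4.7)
The plan is to define $\bR_\DS$ on objects by $\lambda \mapsto \Res \bR_\OS(\lambda)$, to handle morphisms from $\OS(q,q^{m-2n})$ via the right-module structure of $\Uis\tmod$ over $\Us\tmod$ set up in \cref{xenon}, and to send $\togupdown \mapsto \varphi$, $\togdownup \mapsto \varphi^{-1}$. Uniqueness is immediate, since $\togupdown$ and $\togdownup$ generate $\DS(q,q^{m-2n})$ as an $\OS(q,q^{m-2n})$-module and the module action on $\Uis\tmod$ is prescribed. For existence, by the presentation of $\DS(q,t)$ as an $\OS(q,t)$-module described in \cref{subsec:DS}, it suffices to check that the relations \cref{DStoggles} and \cref{DScurls} hold in $\Uis\tmod$ under the prescribed assignment.

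The first two relations in \cref{DStoggles} assert that the two toggles are mutually inverse; these translate to $\varphi^{-1}\circ \varphi = 1_{V^-}$ and $\varphi \circ \varphi^{-1} = 1_{V^+}$, which hold by \cref{varphi}. The two relations in \cref{DScurls} become equalities of $\Uis$-homomorphisms; each side is a composition of a braiding (given by \cref{Vbraid}, \cref{Vbraidinv}, or \cref{rodent}), an evaluation or coevaluation (given by \cref{ev+}, \cref{coev}, \cref{button}), and a toggle (given by \cref{varphidef}). Evaluating both sides on a basis vector and simplifying reduces each equality to a numerical identity involving the scalars $\tva_k$ from \cref{kowloon} and signs coming from the parity function $p$; these identities can be checked by direct calculation.

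The third, and hardest, relation in \cref{DStoggles} is a twisted reflection equation. Its two sides translate, under the prescribed assignment, to $\Uis$-module homomorphisms $V^-\otimes V^-\to V^+\otimes V^+$, each an interleaved composition of two braidings and two toggles. The main obstacle is the intricate interplay between $\varphi$ and the braiding, which mixes parity classes via the involution $\phi$ of \cref{pump}. To handle this, I would invoke \cref{generate}: since both sides are $\Ui$-linear, it suffices to verify the equality on the generating set $\{v_i^-\otimes v_j^- : \phi(i)\ne j\}\cup \{v_i^-\otimes v_i^- : i>0\}$. On each such generator the action of each crossing is given by \cref{Vbraid} and \cref{Vbraidinv} and the action of each toggle by \cref{varphidef}; a case-by-case computation then confirms that the two composites produce the same vector, with the parity-dependent asymmetry of $\tva_i$ on $i>0$ versus $i\le 0$ accounting for the signs that appear.
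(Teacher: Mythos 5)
Your proposal is correct and follows essentially the same route as the paper: uniqueness from the generating morphisms and the strict module-morphism property, direct basis-vector computations for \cref{DScurls}, and \cref{generate} to reduce the twisted reflection equation to the stated generating set (where the needed crossings are $T_{+-}$ and $T_{--}$ from \cref{rodent}, not just \cref{Vbraid,Vbraidinv}). No substantive differences.
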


\begin{proof}
    The property of being a strict morphism of $\OS(q,q^{m-2n})$-modules, together with the definition \cref{xenon}, means that the diagram
    \[
        \begin{tikzcd}
            \DS(q,q^{m-2n}) \times \OS(q,q^{m-2n}) \arrow[r,"\otimes"] \arrow[d,"\bR_\DS \times \bR_\OS"'] & \DS(q,q^{m-2n}) \arrow[d,"\bR_\DS"]
            \\
            \Uis\tmod \times \Us\tmod \arrow[r,"\otimes"] & \Uis\tmod
        \end{tikzcd}
    \]
    commutes, where the horizontal arrows are the right actions.  It follows that $\bR_\DS$ is uniquely determined by \cref{helium}, since $B_0$, $\togupdown$, and $\togdownup$ generate $\DS(q,q^{m-2n})$ as an $\OS(q,q^{m-2n})$-module.  In particular, $\bR_\DS(\Bobj_r) = \Res \left( (V^+)^{\otimes r} \right)$.  It remains to show that $\bR_\DS$ respects the relations \cref{DStoggles,DScurls}.  The first two relations in \cref{DStoggles} are immediate.

    Next we verify the first relation in \cref{DScurls}.  The image under $\bR_\DS$ of the left-hand side is the map
    \begin{align*}
        v_i^+ \otimes v_j^+
        &\xmapsto{T_{++}} (-1)^{p(i)p(j)} q^{p(i,j)} v_j^+ \otimes v_i^+ + \delta_{i<j} (q-q^{-1}) v_i^+ \otimes v_j^+
        \\
        &\xmapsto{\varphi^{-1} \otimes 1} (-1)^{p(i)p(j)} q^{p(i,j)} \tva_j^{-1} v_{\phi(j)}^- \otimes v_i^+ + \delta_{i<j} (q-q^{-1}) \tva_i^{-1} v_{\phi(i)}^- \otimes v_j^+
        \\
        &\xmapsto{\ev^-} \delta_{i,\phi(j)} \tva_i^{-1} \Big( (-1)^i q^{p(i,j)} q^{m+2n-|i-1|} + \delta_{i<j} (q-q^{-1}) (-1)^{j+p(i)} q^{m+2n-|j-1|} \Big)
        \\
        &= \delta_{i,\phi(j)} (-1)^i q^{m+2n-|j-1|+1} \tva_i^{-1}
    \end{align*}
    and
    \(
        \bR_\DS
        \left(
            \begin{tikzpicture}[anchorbase]
                \draw[->] (0.2,-0.3) -- (0.2,0.1) arc(0:180:0.2) -- (-0.2,0);
                \draw[<-] (-0.2,0) -- (-0.2,-0.3);
                \opendot{-0.2,0};
            \end{tikzpicture}
        \right)
    \)
    is the map
    \[
        v_i^+ \otimes v_j^+
        \xmapsto{\varphi^{-1} \otimes 1} \tva_i^{-1} v_{\phi(i)}^- \otimes v_j^+
        \xmapsto{\ev^-} \delta_{i,\phi(j)} (-1)^{j+p(i)} q^{m+2n-|j-1|} \tva_i^{-1}
        = \delta_{i,\phi(j)} (-1)^i q^{m+2n-|j-1|} \tva_i^{-1},
    \]
    verifying the first relation in \cref{DScurls}.

    Now consider the second relation in \cref{DScurls}.  The image under $\bR_\DS$ of the left-hand side is the map
    \begin{align*}
        1 &\xmapsto{\coev_+} \sum_{i \in \Vset} (-q)^{-|i|} v_i^- \otimes v_i^+
        \\
        &\xmapsto{\varphi \otimes 1} \sum_{i \in \Vset} (-q)^{-|i|} \tva_i v_{\phi(i)}^+ \otimes v_i^+
        \\
        &\xmapsto{T_{++}} \sum_{i \in \Vset} (-q)^{-|i|} \tva_i \left( (-1)^{p(i)} q^{p(\phi(i),i)} v_i^+ \otimes v_{\phi(i)}^+ + \delta_{\phi(i)<i} (q-q^{-1}) v_{\phi(i)}^+ \otimes v_i^+ \right)
        \\
        &= \sum_{i>0} (-q)^{1-|i|} \tva_i v_i^+ \otimes v_i^+
        - \sum_{i \in \{0,-2,\dotsc,2-2n\}} (-q)^i \tva_i \left( v_i^+ \otimes v_{i-1}^+ - (q-q^{-1}) v_{i-1}^+ \otimes v_i^+ \right)
        \\ &\qquad \qquad
        - \sum_{i \in \{-1,-3,\dotsc,1-2n\}} (-q)^i \tva_i v_i^+ \otimes v_{i+1}^+
        \\
        &= \sum_{i \in \Vset} (-q)^{1-|i|} \tva_i v_{\phi(i)}^+ \otimes v_i^+
    \end{align*}
    and
    \(
        \bR_\DS
        \left(
            \begin{tikzpicture}[anchorbase]
                \draw[<-] (0.2,0.3) -- (0.2,-0.1) arc(360:180:0.2) -- (-0.2,0);
                \draw[->] (-0.2,0) -- (-0.2,0.3);
                \opendot{-0.2,0};
            \end{tikzpicture}
        \right)
    \)
    is the map
    \[
        1 \xmapsto{\coev_+} \sum_{i \in \Vset} (-q)^{-|i|} v_i^- \otimes v_i^+
        \xmapsto{\varphi \otimes 1} \sum_{i \in \Vset} (-q)^{-|i|} \tva_i v_{\phi(i)}^+ \otimes v_i^+,
    \]
    verifying the second relation in \cref{DScurls}.

    It remains to verify the last relation in \cref{DStoggles}.  By \cref{generate}, it suffices to show that the images under $\bR_\DS$ of the two sides of the relation agree on the set \cref{genset}.  Note that $p(\phi(i)) = p(i)$ for all $i \in \Vset$.  Using \cref{rodent}, we compute, for $i>0$,
    \begin{multline*}
        \bR_\DS
        \left(
            \begin{tikzpicture}[centerzero]
                \draw[<-] (0.2,-0.6) -- (0.2,-0.4) \braidup (-0.2,0);
                \draw[wipe] (-0.2,-0.6) -- (-0.2,-0.4) \braidup (0.2,0);
                \draw[<-] (-0.2,-0.6) -- (-0.2,-0.4) \braidup (0.2,0);
                \draw[->] (0.2,0) \braidup (-0.2,0.4) -- (-0.2,0.6);
                \draw[wipe] (-0.2,0) \braidup (0.2,0.4) -- (0.2,0.6);
                \draw[->] (-0.2,0) \braidup (0.2,0.4) -- (0.2,0.6);
                \opendot{-0.2,0.4};
                \opendot{-0.2,0};
            \end{tikzpicture}
        \right)
        \colon v_i^- \otimes v_i^-
        \xmapsto{T_{--}} q v_i^- \otimes v_i^-
        \xmapsto{\varphi \otimes 1} q \tva_i v_i^+ \otimes v_i^-
        \xmapsto{T_{+-}} \tva_i v_i^- \otimes v_i^+ + (1-q^2) \tva_i \sum_{k>i} (-q)^{i-k} v_k^- \otimes v_k^+
        \\
        \xmapsto{\varphi \otimes 1} \tva_i^2 v_i^- \otimes v_i^+ + (1-q^2) \tva_i \sum_{k>i} (-q)^{i-k} \tva_k v_k^- \otimes v_k^+,
    \end{multline*}
    and
    \begin{multline*}
        \bR_\DS
            \left(
                \begin{tikzpicture}[centerzero]
                \draw[<-] (0.2,-0.6) -- (0.2,-0.4) \braidup (-0.2,0);
                \draw[wipe] (-0.2,-0.6) -- (-0.2,-0.4) \braidup (0.2,0);
                \draw[<-] (-0.2,-0.6) -- (-0.2,-0.4) \braidup (0.2,0);
                \draw[->] (0.2,0) \braidup (-0.2,0.4) -- (-0.2,0.6);
                \draw[wipe] (-0.2,0) \braidup (0.2,0.4) -- (0.2,0.6);
                \draw[->] (-0.2,0) \braidup (0.2,0.4) -- (0.2,0.6);
                \opendot{-0.2,-0.4};
                \opendot{-0.2,0};
            \end{tikzpicture}
        \right)
        \colon v_i^- \otimes v_i^-
        \xmapsto{\varphi \otimes 1} \tva_i v_i^+ \otimes v_i^-
        \xmapsto{T_{+-}} q^{-1} \tva_i v_i^- \otimes v_i^+ - (q-q^{-1}) \tva_i \sum_{k>i} (-q)^{i-k} v_k^- \otimes v_k^+
        \\
        \xmapsto{\varphi \otimes 1} q^{-1} \tva_i^2 v_i^+ \otimes v_i^+ - (q-q^{-1}) \tva_i \sum_{k>i} (-q)^{i-k} \tva_k v_k^+ \otimes v_k^+
        \xmapsto{T_{++}} \tva_i^2 v_i^+ \otimes v_i^+ + (1-q^2) \tva_i \sum_{k>i} (-q)^{i-k} \tva_k v_k^+ \otimes v_k^+,
    \end{multline*}
    which agree.  If $\phi(i) \ne j$, then
    \begin{align*}
        \bR_\DS
        \left(
            \begin{tikzpicture}[centerzero]
                \draw[<-] (0.2,-0.6) -- (0.2,-0.4) \braidup (-0.2,0);
                \draw[wipe] (-0.2,-0.6) -- (-0.2,-0.4) \braidup (0.2,0);
                \draw[<-] (-0.2,-0.6) -- (-0.2,-0.4) \braidup (0.2,0);
                \draw[->] (0.2,0) \braidup (-0.2,0.4) -- (-0.2,0.6);
                \draw[wipe] (-0.2,0) \braidup (0.2,0.4) -- (0.2,0.6);
                \draw[->] (-0.2,0) \braidup (0.2,0.4) -- (0.2,0.6);
                \opendot{-0.2,0.4};
                \opendot{-0.2,0};
            \end{tikzpicture}
        \right)
        \colon v_i^- \otimes v_j^-
        &\xmapsto{T_{--}} (-1)^{p(i)p(j)} q^{p(i,j)} v_j^- \otimes v_i^- + \delta_{i>j} (q-q^{-1}) v_i^- \otimes v_j^-
        \\
        &\xmapsto{\varphi \otimes 1} (-1)^{p(i)p(j)} q^{p(i,j)} \tva_j v_{\phi(j)}^+ \otimes v_i^- + \delta_{i>j} (q-q^{-1}) \tva_i v_{\phi(i)}^+ \otimes v_j^-
        \\
        &\xmapsto{T_{+-}} q^{p(i,j)} \tva_j v_i^- \otimes v_{\phi(j)}^+ + \delta_{i>j} (-1)^{p(i)p(j)} (q-q^{-1}) \tva_i v_j^- \otimes v_{\phi(i)}^+
        \\
        &\xmapsto{\varphi \otimes 1} q^{p(i,j)} \tva_i \tva_j v_{\phi(i)}^+ \otimes v_{\phi(j)}^+ + \delta_{i>j} (-1)^{p(i)p(j)} (q-q^{-1}) \tva_i \tva_j v_{\phi(j)}^+ \otimes v_{\phi(i)}^+
    \end{align*}
    and
    \begin{align*}
        \bR_\DS
            \left(
                \begin{tikzpicture}[centerzero]
                \draw[<-] (0.2,-0.6) -- (0.2,-0.4) \braidup (-0.2,0);
                \draw[wipe] (-0.2,-0.6) -- (-0.2,-0.4) \braidup (0.2,0);
                \draw[<-] (-0.2,-0.6) -- (-0.2,-0.4) \braidup (0.2,0);
                \draw[->] (0.2,0) \braidup (-0.2,0.4) -- (-0.2,0.6);
                \draw[wipe] (-0.2,0) \braidup (0.2,0.4) -- (0.2,0.6);
                \draw[->] (-0.2,0) \braidup (0.2,0.4) -- (0.2,0.6);
                \opendot{-0.2,-0.4};
                \opendot{-0.2,0};
            \end{tikzpicture}
        \right)
        \colon v_i^- \otimes v_j^-
        &\xmapsto{\varphi \otimes 1} \tva_i v_{\varphi(i)}^+ \otimes v_j^-
        \\
        &\xmapsto{T_{+-}} (-1)^{p(i)p(j)} \tva_i v_j^- \otimes v_{\phi(i)}^+
        \\
        &\xmapsto{\varphi \otimes 1} (-1)^{p(i)p(j)} \tva_i \tva_j v_{\phi(j)}^+ \otimes v_{\phi(i)}^+
        \\
        &\xmapsto{T_{++}} q^{p(i,j)} \tva_i \tva_j v_{\phi(i)}^+ \otimes v_{\phi(j)}^+ + \delta_{\phi(i)>\phi(j)} (-1)^{p(i)p(j)} (q-q^{-1}) \tva_i \tva_j v_{\phi(j)}^+ \otimes v_{\phi(i)}^+,
    \end{align*}
    which also agree since the condition $\phi(i) \ne j$ implies that $i>j \iff \phi(i)>\phi(j)$.
    \details{
        We can also compute a general expression for the maps considered above.  Using \cref{rodent}, we compute that the image under $\bR_\DS$ of the right-hand side of the last relation in \cref{DStoggles} is the map
        \begin{align*}
            v_i^- &\otimes v_j^-
            \xmapsto{T_{--}} (-1)^{p(i)p(j)} q^{p(i,j)} v_j^- \otimes v_i^-
            + \delta_{i>j} (q-q^{-1}) v_i^- \otimes v_j^-
            \\
            &\xmapsto{\varphi \otimes 1} (-1)^{p(i)p(j)} q^{p(i,j)} \tva_j v_{\phi(j)}^+ \otimes v_i^-
            + \delta_{i>j} (q-q^{-1}) \tva_i v_{\phi(i)}^+ \otimes v_j^-
            \\
            &\xmapsto{T_{+-}} q^{p(i,j)-p(i,\phi(j))} \tva_j v_i^- \otimes v_{\phi(j)}^+
            - \delta_{i,\phi(j)} (-1)^{p(i)p(j)} q^{p(i,j)} (q-q^{-1}) \tva_j \sum_{k > i} (-q)^{|i|-|k|} v_k^- \otimes v_k^+ \\
            & \quad + \delta_{i>j} (-1)^{p(i)p(j)} q^{-p(\phi(i),j)} (q-q^{-1}) \tva_i v_j^- \otimes v_{\phi(i)}^+
            - \delta_{i>j} \delta_{\phi(i),j} (q-q^{-1})^2 \tva_i \sum_{k > \phi(i)} (-q)^{|\phi(i)|-|k|} v_k^- \otimes v_k^+
            \\
            &\xmapsto{\varphi \otimes 1} q^{p(i,j)-p(i,\phi(j))} \tva_i \tva_j v_{\phi(i)}^+ \otimes v_{\phi(j)}^+
            - \delta_{i,\phi(j)} (-1)^{p(i)p(j)} q^{p(i,j)} (q-q^{-1}) \tva_j \sum_{k > i} (-q)^{|i|-|k|} \tva_k v_{\phi(k)}^+ \otimes v_k^+ \\
            & \quad + \delta_{i>j} (-1)^{p(i)p(j)} q^{-p(\phi(i),j)} (q-q^{-1}) \tva_i \tva_j v_{\phi(j)}^+ \otimes v_{\phi(i)}^+
            - \delta_{i>j} \delta_{\phi(i),j} (q-q^{-1})^2 \tva_i \sum_{k > \phi(i)} (-q)^{|\phi(i)|-|k|} \tva_k v_{\phi(k)}^+ \otimes v_k^+.
        \end{align*}
        On the other hand, the image under $\bR_\DS$ of the left-hand side of the last relation in \cref{DStoggles} is the map
        \begin{align*}
            v_i^- \otimes v_j^-
            &\xmapsto{\varphi \otimes 1}
            \tva_i v_{\phi(i)}^+ \otimes v_j^-
            \\
            &\xmapsto{T_{+-}} (-1)^{p(i)p(j)} q^{-p(\phi(i),j)} \tva_i v_j^- \otimes v_{\phi(i)}^+
            - \delta_{\phi(i),j} (q-q^{-1}) \tva_i \sum_{k > \phi(i)} (-q)^{|\phi(i)|-|k|} v_k^- \otimes v_k^+
            \\
            &\xmapsto{\varphi \otimes 1} (-1)^{p(i)p(j)} q^{-p(\phi(i),j)} \tva_i \tva_j v_{\phi(j)}^+ \otimes v_{\phi(i)}^+
            - \delta_{\phi(i),j} (q-q^{-1}) \tva_i \sum_{k > \phi(i)} (-q)^{|\phi(i)|-|k|} \tva_k v_{\phi(k)}^+ \otimes v_k^+
            \\
            &\xmapsto{T_{++}} q^{p(i,j)-p(\phi(i),j)} \tva_i \tva_j v_{\phi(i)}^+ \otimes v_{\phi(j)}^+
            + \delta_{\phi(j)<\phi(i)} (-1)^{p(i)p(j)} q^{-p(\phi(i),j)} (q-q^{-1}) \tva_i \tva_j v_{\phi(j)}^+ \otimes v_{\phi(i)}^+
            \\ &\quad - \delta_{\phi(i),j} (q-q^{-1}) \tva_i \sum_{k > \phi(i)} (-1)^{p(k)} q^{\delta_{k>0}} (-q)^{|\phi(i)|-|k|} \tva_k v_k^+ \otimes v_{\phi(k)}^+
            \\ &\quad - \delta_{\phi(i),j} (q-q^{-1})^2 \tva_i \sum_{k > \phi(i)} \delta_{\phi(k)<k} (-q)^{|\phi(i)|-|k|} \tva_k v_{\phi(k)}^+ \otimes v_k^+.
        \end{align*}
        It is possible to show directly that these two maps are equal.  However, the argument based on \cref{generate} is simpler.
    }
\end{proof}

\begin{rem}\label{reflection}
    If either $m = 0$ or $n = 0$, then the preservation of the last relation in \cref{DStoggles} under $\bR_\DS$ follows from the reflection equation \cite[(9.16)]{BK19} between the universal $R$-matrix of $\Ualg$ and the universal $K$-matrix of $\Ui$.  Indeed, let $\tau_0$ be the algebra automorphism of $\Ualg$ corresponding to the nontrivial involution of its Dynkin diagram.  By \cite[Cor.~7.7]{BK19}, the universal $K$-matrix $\mathcal{K}$ defines an isomorphism of $\Ui$-modules
    \[
        \mathcal{K}_M \colon M \to M^{\tau_0}
    \]
    for any finite-dimensional $\Ualg$-module $M$, where $M^{\tau_0}$ has the same underlying vector space as $M$ but with the action given by $(x, v) \mapsto \tau_0(x) v$ for $x \in \Ualg$ and $v \in M^{\tau_0}$.  (Recall that the diagram automorphism $\tau$, which is part of the data of the super Satake diagram \cref{satake}, is the identity in our case.)  It follows that the universal $K$-matrix $\mathcal{K}$ defines a $\Ui$-isomorphism
    \[
        \mathcal{K}_{V^-} \colon V^- \to (V^-)^{\tau_0}.
    \]
    However, it is straightforward to check that $(V^-)^{\tau_0} \cong V^+$ as $\Ualg$-modules.  Since $V^- \cong V^+$ as $\Ui$-modules and both are irreducible, Schur's Lemma implies that one can identify $\mathcal{K}_{V^-}$ with the map $\varphi$, up to a scalar. Then the last relation in~\cref{DStoggles} follows from the reflection equation established in \cite[(9.16)]{BK19}.  The proof of \cref{disincarnate} could be made significantly shorter if the machinery of the $K$-matrix existed in the super setting; unfortunately, this is not yet the case.
\end{rem}

\subsection{The iquantum incarnation functor}

Define
\[
    V_r = \Res \left( (V^+)^{\otimes r} \right), \qquad r \in \N.
\]

\begin{prop}
    The composite morphism of $\OS(q,q^{m-2n})$-modules
    \begin{equation}\label{iqBincar}
        \bR_\iqB := \bR_\DS \bG \colon \iqB(q,q^{m-2n}) \to \Uis\tmod
    \end{equation}
    is given on objects by $\Bobj_r \mapsto V_r$, $r \in \N$, and on morphisms by
    \begin{gather*}
        \begin{tikzpicture}[centerzero]
            \draw (-0.15,0.2) -- (-0.15,0.1) arc(180:360:0.15) -- (0.15,0.2);
            \draw[multi] (0.45,-0.2) \botlabel{r} -- (0.45,0.2);
        \end{tikzpicture}
        \mapsto \big( (\varphi \otimes 1_{V_1}) \circ \coev_+ \big) \circ \otimes 1_{V_r},
        \qquad
        \begin{tikzpicture}[centerzero]
            \draw (-0.15,-0.2) -- (-0.15,-0.1) arc(180:0:0.15) -- (0.15,-0.2);
            \draw[multi] (0.45,-0.2) \botlabel{r} -- (0.45,0.2);
        \end{tikzpicture}
        \mapsto \big( \ev^- \circ (\varphi^{-1} \otimes 1_{V_1}) \big) \otimes 1_{V_r},
        \\
        \thickstrand{r} \poscross\ \thickstrand{s}
        \mapsto 1_{V_r} \otimes T_{++} \otimes 1_{V_s},
        \qquad
        \thickstrand{r} \negcross\ \thickstrand{s}
        \mapsto 1_{V_r} \otimes T_{++}^{-1} \otimes 1_{V_s}.
    \end{gather*}
\end{prop}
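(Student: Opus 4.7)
The proof is a routine verification obtained by composing $\bG$ and $\bR_\DS$ on each generator of $\iqB(q,q^{m-2n})$. The plan is as follows.

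First, I would verify the action on objects. From \cref{eq:bGi}, $\bG(\Bobj_r) = \upobj^{\otimes r}$. Since $\bR_\DS$ is a strict morphism of $\OS$-modules (\cref{disincarnate}), it sends $\upobj^{\otimes r} = \one \otimes \upobj^{\otimes r}$ to $\Res(\triv) \otimes (V^+)^{\otimes r} = V_r$, as claimed.

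Next, I would check the crossings. Directly from \cref{eq:bGi}, $\bG\bigl(\thickstrand{r}\, \poscross\ \thickstrand{s}\bigr) = \thickstrandup{r}\, \posupcross\ \thickstrandup{s}$. Interpreting this in the $\OS$-module structure of $\DS(q,t)$, it is the right action of the $\OS$-morphism $\mathbf{1}_{\upobj^r} \otimes \posupcross \otimes \mathbf{1}_{\upobj^s}$ on $\one \in \DS(q,t)$. By strictness of $\bR_\DS$ and \cref{oincarnate} (which gives $\bR_\OS(\posupcross) = T_{++}$), this maps to $\mathbf{1}_{V_r} \otimes T_{++} \otimes \mathbf{1}_{V_s}$. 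The negative crossing case is analogous using $\bR_\OS(\negupcross) = T_{++}^{-1}$.

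For the cups and caps, I would unwind the images under $\bG$ given in \cref{eq:bGi}. The iquantum Brauer cup is sent by $\bG$ to a disoriented diagram that, in the module-theoretic notation, is the composition
\[
    \upobj^{\otimes r}
    \xrightarrow{\mathbf{1}_\one \otimes \rightcup \otimes \mathbf{1}_{\upobj^{\otimes r}}}
    \downobj \otimes \upobj \otimes \upobj^{\otimes r}
    \xrightarrow{\togupdown \otimes \mathbf{1}_{\upobj^{\otimes(r+1)}}}
    \upobj^{\otimes(r+2)}.
\]
Applying $\bR_\DS$, using strictness together with $\bR_\OS(\rightcup) = \coev_+$ from \cref{oincarnate} and $\bR_\DS(\togupdown) = \varphi$ from \cref{helium}, yields $\bigl((\varphi \otimes \mathbf{1}_{V_1}) \circ \coev_+\bigr) \otimes \mathbf{1}_{V_r}$. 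The cap case is entirely analogous, with $\togdownup \mapsto \varphi^{-1}$ and $\leftcap \mapsto \ev^-$, giving $\bigl(\ev^- \circ (\varphi^{-1} \otimes \mathbf{1}_{V_1})\bigr) \otimes \mathbf{1}_{V_r}$.

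Finally, that $\bR_\iqB$ is a morphism of $\OS(q,q^{m-2n})$-modules follows from formal considerations: $\bG$ is a morphism of $\OS$-modules with structure natural isomorphism $\omega$ (by \cref{ginger}), and $\bR_\DS$ is strict (by \cref{disincarnate}), so their composite is a morphism of $\OS$-modules with structure natural isomorphism whose component at $(\Bobj_r, \lambda)$ is $\bR_\DS(\omega_{\Bobj_r, \lambda})$. There is no significant obstacle; the main care required is to correctly identify the disoriented diagrams in the image of $\bG$ as compositions built from $\rightcup$, $\leftcap$, and the toggles, so that strictness of $\bR_\DS$ can be applied cleanly.
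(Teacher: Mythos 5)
Your proposal is correct and takes essentially the same route as the paper, which simply states that the result follows immediately from the definitions of $\bR_\DS$ and $\bG$; your write-up just makes the generator-by-generator unwinding explicit, correctly using the strictness of $\bR_\DS$ and the formulas in \cref{eq:bGi}, \cref{oincarnate}, and \cref{helium}.
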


\begin{proof}
    This follows immediately from the definitions of $\bR_\DS$ and $\bG$.
\end{proof}

Note that $\bR_\iqB$ is not a \emph{strict} morphism of $\OS(q,q^{m-2n})$-modules, since $\bG$ is not; see \cref{laser}.  The map
\[
    \bR_\iqB
    \left(\,
        \begin{tikzpicture}[centerzero]
            \draw (-0.15,0.3) -- (-0.15,0.2) arc(180:360:0.15) -- (0.15,0.3);
            \draw (-0.15,-0.3) -- (-0.15,-0.2) arc(180:0:0.15) -- (0.15,-0.3);
        \end{tikzpicture}
    \, \right)
    \colon V_2 \to V_2
\]
recovers the operator $\Xi$ defined in \cite[\S7.3]{SW24}.

\section{Basis theorems and endomorphism algebras}

In this section, we describe bases for the morphism spaces of $\DS(q,t)$ and $\iqB(q,t)$.  Throughout this section $\kk$ is an arbitrary integral domain and $q,t$ are elements of $\kk^\times$ such that $t-t^{-1}$ is divisible by $q-q^{-1}$.
\subsection{Basis theorem}

For $\lambda,\mu \in \word$, a \emph{$(\lambda,\mu)$-matching} is a partition of the set
\[
    E(\lambda,\mu) := \{-1,-2,\dotsc,-\ell(\lambda),1,2,\dotsc,\ell(\mu)\}
\]
into subsets of size two.  In a string diagram representing a morphism in $\Hom_{\DS(q,t)}(\lambda,\mu)$, each non-closed string has two endpoints.  We view these endpoints as elements of $E(\lambda,\mu)$ by numbering the $\upobj$, $\downobj$ in $\lambda$ by $-1,-2,\dotsc,-\ell(\lambda)$ from left to right and the $\upobj$, $\downobj$ in $\mu$ by $1,2,\dotsc,\ell(\mu)$ from left to right.  A \emph{reduced $(\lambda,\mu)$-diagram} for a given $(\lambda,\mu)$-matching is a string diagram representing a morphism in $\Hom_\DS(\lambda,\mu)$ such that:
\begin{itemize}
    \item the endpoints of each string correspond under the given matching (i.e., they form one of the two-element subsets of the partition);
    \item there are no closed strings (i.e., strings with no endpoints);
    \item no string has more than one critical point (i.e., more than one point at which the tangent is horizontal);
    \item there are no self-intersections of strings and no two strings cross each other more than once;
    \item all toggles on strings connecting the top and bottom of the diagram are near their bottom endpoint.
    \item all toggles on strings with both endpoints at the top of the diagram or both endpoints at the bottom of the diagram are near the left endpoint of the string;
\end{itemize}
For example, for $\lambda = \downobj \upobj \downobj \downobj \downobj \upobj \upobj$ and $\mu = \upobj \upobj \upobj \upobj \downobj$,
\[
    \begin{tikzpicture}
        \draw[->] (0.4,-0.2) -- (0.4,0.4) \braidup (0,0.8) -- (0,1.6);
        \draw[wipe] (0,0.4) \braidup (0.4,0.8);
        \draw[<->] (0,-0.2) -- (0,0.4) \braidup (0.4,0.8) to[out=up,in=up,looseness=1.5] (0.8,0.8) \braiddown (1.2,0.4) \braiddown (1.6,0) -- (1.6,-0.2);
        \draw[wipe] (0.8,0.4) \braidup (1.2,0.8);
        \draw[<->] (0.8,-0.2)  -- (0.8,0.4) \braidup (1.2,0.8) -- (1.2,1.6);
        \draw[wipe] (1.2,0) \braidup (1.6,0.4);
        \draw[<-] (1.2,-0.2) -- (1.2,0) \braidup (1.6,0.4) -- (1.6,0.8) to[out=up,in=up,looseness=1.5] (2,0.8) \braiddown (2.4,0.4) -- (2.4,-0.2);
        \draw[->] (2,-0.2) -- (2,0.1);
        \draw[wipe] (2,0.4) \braidup (2.4,0.8);
        \draw[<-] (2,0.1) -- (2,0.4) \braidup (2.4,0.8) -- (2.4,1.6);
        \draw[<->] (0.4,1.6) -- (0.4,1.3) to[out=down,in=down,looseness=1.5] (0.8,1.3) -- (0.8,1.6);
        \opendot{0,0.1};
        \opendot{2,0.1};
        \opendot{0.8,0.1};
        \opendot{0.4,1.35};
    \end{tikzpicture}
\]
is a reduced $(\lambda,\mu)$-diagram for the $(\lambda,\mu)$-matching
\[
    \big\{ \{-1,-5\}, \{-2,1\}, \{-3,4\}, \{-4,-7\}, \{-6,5\}, \{2,3\} \big\}.
\]
Fix a set $M_\DS(\lambda,\mu)$ consisting of a choice of reduced $(\lambda,\mu)$-diagram for each of the $(\lambda,\mu)$-matchings.

\begin{theo} \label{basis}
    For objects $\lambda,\mu \in \DS(q,t)$, the morphism space $\Hom_{\DS(q,t)}(\lambda,\mu)$ is a free $\kk$-module with basis $M_\DS(\lambda,\mu)$.
\end{theo}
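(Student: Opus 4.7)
The proof naturally splits into spanning and linear independence of $M_\DS(\lambda,\mu)$.

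For spanning, by the module category presentation in \cref{DSdef}, any morphism in $\Hom_{\DS(q,t)}(\lambda,\mu)$ is a $\kk$-linear combination of diagrams built from the generating morphisms of $\OS(q,t)$ together with toggle insertions. The plan is to successively: (i) eliminate closed strands using the bubble identities (last equations of \cref{ocurlbub,venom2}); (ii) apply the basis theorem for $\OS(q,t)$ from \cite{Bru17} together with the curl relations \cref{ocurlbub,ocurlinv,DScurls,DScurls2} and the skein relation \cref{oskein} to put the underlying oriented structure into normal form, so that any two strings cross at most once and each string has at most one critical point; (iii) apply toggle cancellation \cref{DStoggles,dubtog} so that each string carries at most one toggle; and (iv) apply the toggle slide relations \cref{intertog,togcross,togcupcap} to place the remaining toggles in their designated positions (at the bottom endpoint for through-strands; at the left endpoint for cups and caps). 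Each of these moves produces only lower-complexity corrections with respect to a suitable lexicographic measure (for instance, total crossings, then total toggles), so an induction on this measure shows that $M_\DS(\lambda,\mu)$ spans.

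For linear independence, the plan is to reduce to a universal base ring and apply the incarnation functor. Let $A$ denote the integral version of the base ring, obtained from $\Z[q^{\pm 1},t^{\pm 1}]$ by adjoining an element $s$ satisfying $s(q-q^{-1})=t-t^{-1}$, and let $\DS_A(q,t)$ be the corresponding category over $A$. For any $\kk$ as in the theorem, $\DS_\kk(q,t)$ is obtained from $\DS_A(q,t)$ by base change along the canonical map $A\to \kk$, so it suffices to prove linear independence over $A$. Base-change to $\C(q)$ by specializing $t=q^{m-2n}$ for integers $m$ and $n$ (so $s=[m-2n]$), and apply the incarnation functor $\bR_\DS$ of \cref{disincarnate}. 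By computing matrix entries on the monomial bases $\{v_i^\pm\}$ of the tensor factors, reduced diagrams corresponding to distinct $(\lambda,\mu)$-matchings are shown to produce $\Uis$-module homomorphisms in $\Hom_{\Uis}(V_{\ell(\lambda)},V_{\ell(\mu)})$ whose leading matrix entries are supported on disjoint monomial tensors once $m+2n$ is sufficiently large. Since linear independence holds at infinitely many such specializations, a Zariski-density argument lifts it to $A$, and hence to $\kk$.

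The main obstacle is the linear independence claim, and in particular the verification that the incarnation functor is generically faithful on $\Hom_{\DS(q,t)}(\lambda,\mu)$. The key technical step is a triangularity argument on matrix entries of the images $\bR_\DS(D)$ for reduced diagrams $D$, showing that each matching contributes a distinguished leading term that no other matching can cancel. An alternative, more conceptual, route is to transfer the basis theorem through the equivalence $(\bG,\omega)$ of \cref{ginger} and \cref{equivthm}: establish the analogous basis theorem for $\iqB(q,t)$ directly by combining the known basis of the iquantum Brauer algebras from \cite{Mol03,Wen12} with the $\OS(q,t)$-module category structure developed in \cref{sec:iqBmodstruct}, and then transport the basis to $\DS(q,t)$ via $\bG$.
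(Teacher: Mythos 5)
Your proposal follows essentially the same route as the paper: a straightening algorithm for spanning, and linear independence via an integral form, the incarnation functor $\bR_\DS$ at specializations of $t$ to integer powers of $q$, and the observation that vanishing of a Laurent polynomial $c_f(q,t)$ at infinitely many such specializations forces $c_f=0$.

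The substance you defer as ``the key technical step'' is exactly where the paper's two decisive devices live, so let me record them. First, the paper uses the duality of $\DS(q,t)$ to bend all top boundary points down, reducing linear independence to the case $\Hom_{\DS(q,t)}(\lambda,\one)$; this is the advantage of $\DS(q,t)$ over $\iqB(q,t)$ advertised in the introduction, and you do not mention it. Second, rather than a triangularity argument at generic $q$ --- where every crossing contributes a correction term proportional to $(q-q^{-1})$ and the claim that ``leading matrix entries are supported on disjoint monomial tensors'' is not immediate --- the paper takes $n=0$, restricts $\bR_\DS$ to a $\Z[q^{\pm 1}]$-lattice $W_\lambda$, and reduces modulo $(q-1)$: every crossing becomes the tensor flip, the toggles become signed identifications of basis vectors, and a reduced diagram evaluates to $\pm 1$ on $v_{\bi,\lambda}$ exactly when the entries of $\bi$ respect its matching, and to $0$ otherwise. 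Choosing, for each matching, a tuple whose entries coincide precisely on matched pairs (possible once $m\ge\ell(\lambda)$) then kills all coefficients. Your proposed alternative route through $\iqB(q,t)$ would face the obstacle that the basis theorem of Wenzl covers only the endomorphism algebras, and $\iqB(q,t)$ lacks the duality needed to reduce a general $\Hom(\Bobj_r,\Bobj_s)$ to an endomorphism algebra (its cups and caps live only on the left); this is precisely why the paper proves the basis theorem in $\DS(q,t)$ first and transports it to $\iqB(q,t)$ via the equivalence, rather than the other way around.
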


\begin{proof}
    Set $\DS = \DS(q,t)$.  It suffices to consider the ground ring
    \[
        \kk = \Z[q^{\pm 1}, t^{\pm 1}, (t-t^{-1})/(q-q^{-1})],
    \]
    since the more general case then follows by base extension.  The defining relations and the additional relations deduced in \cref{subsec:OS,subsec:DS} give Reidemeister-type relations modulo terms with fewer crossings, plus a skein relation.  The relation \cref{togcross}, together with the skein relation \cref{oskein}, implies that we can slide toggles through crossings, modulo diagrams with fewer crossings.  Furthermore, we can slide toggles over cups and caps, modulo diagrams with fewer crossings, by moving cups and caps to the left of the diagram, then using \cref{togcross,togcupcap}.  These relations allow us to transform diagrams for morphisms in $\DS$ in way similar to the way oriented tangles are simplified in skein categories, modulo diagrams with fewer crossings.  Therefore, there is a straightening algorithm to rewrite any diagram representing a morphism $\lambda \to \mu$ as a linear combination of the ones in $M_\DS(\lambda,\mu)$.

    It remains to prove linear independence.  For $\mu \in \word$, let $\mu'$ denote the word obtained from $\mu$ by rotating $180\degree$.  For example, $(\upobj \upobj \downobj \upobj \downobj)' = (\upobj \downobj \upobj \downobj \downobj)$.  For $\lambda, \mu \in \word$, we have a $\kk$-linear isomorphism
    \begin{equation}\label{kiko}
        \Hom_\DS(\lambda,\mu) \to \Hom_\DS(\lambda \otimes \mu', \one),
        \qquad
        \begin{tikzpicture}[centerzero]
            \draw[multi] (0,-0.5) \botlabel{\lambda} -- (0,0.5) \toplabel{\mu};
            \coupon{0,0}{f};
        \end{tikzpicture}
        \mapsto
        \begin{tikzpicture}[centerzero]
            \draw[multi] (0,-0.5) \botlabel{\lambda} -- (0,0.2) to[out=up,in=up,looseness=2] (0.5,0.2) -- (0.5,-0.5) \botlabel{\mu'};
            \coupon{0,0}{f};
        \end{tikzpicture}
        ,
    \end{equation}
    with inverse
    \[
        \Hom_\DS(\lambda \otimes \mu', \one) \to \Hom_\DS(\lambda,\mu),
        \qquad
        \begin{tikzpicture}[centerzero]
            \draw[multi] (-0.2,-0.5) \botlabel{\lambda} -- (-0.2,0);
            \draw[multi] (0.2,-0.5) \botlabel{\mu'} -- (0.2,0);
            \draw[fill=white,rounded corners] (-0.4,-0.2) rectangle (0.4,0.2);
            \draw (0,0) node {\strandlabel{f}};
        \end{tikzpicture}
        \mapsto
        \begin{tikzpicture}[centerzero]
            \draw[multi] (-0.2,-0.5) \botlabel{\lambda} -- (-0.2,0);
            \draw[multi] (0.2,-0.2) to[out=down,in=down,looseness=2] (0.6,-0.2) -- (0.6,0.5) \toplabel{\mu};
            \draw[fill=white,rounded corners] (-0.4,-0.2) rectangle (0.4,0.2);
            \draw (0,0) node {\strandlabel{f}};
        \end{tikzpicture}
        .
    \]
    Via the straightening algorithm mentioned above (e.g., using \cref{adjunction} to remove instances of multiple critical points and \cref{oskein} to flip crossings if necessary), there is a bijection from $M_\DS(\lambda,\mu)$ to ${M_\DS(\lambda \otimes \mu', \one)}$ such that \cref{kiko} acts by this bijection modulo diagrams with fewer crossings.  Thus, it suffices to prove the linear independence of $M_\DS(\lambda,\one)$.  Since $\Hom_\DS(\lambda,\one) = 0$ when $\ell(\lambda) \notin 2\N$, we assume that $\ell(\lambda) \in 2\N$.

    We will use the disoriented incarnation functor $\bR_\DS$ in the case that $n=0$, which we assume throughout this proof.  We also make the choice \cref{oilers} for the $\va_i$.  This implies that $\tva_i = 1$ for all $i \in \Vset$; see \cref{kowloon}.
    
    For $\mu \in \word$ and $1 \le a \le \ell(\mu)$, let
    \begin{equation} \label{unicorn}
        \pi_a(\mu)
        =
        \begin{cases}
            + &\text{if the $a$\,th term in $\mu$ is $\upobj$}, \\
            - &\text{if the $a$\,th term in $\mu$ is $\downobj$}.
        \end{cases}
    \end{equation}
    Then, for $\bi = (i_1,\dotsc,i_{\ell(\mu)}) \in \Vset^{\ell(\mu)}$, let
    \[
        v_{\bi,\mu} = v_{i_1}^{\pi_1(\mu)} \otimes v_{i_2}^{\pi_2(\mu)} \otimes \dotsb \otimes v_{i_{\ell(\mu)}}^{\pi_{\ell(\mu)}(\mu)}
        \in \bR_\DS(\mu).
    \]
    We define
    \[
        W_\mu = \sum_{\bi \in \Vset^{\ell(\mu)}} \Z[q^{\pm 1}] v_{\bi,\mu} \subseteq \bR_\DS(\mu).
    \]

    For a morphism $f \colon \mu \to \nu$ in $\DS(q,t)$, define
    \[
        \bR_\DS^W(f) = \bR_\DS(f)|_{W_\mu}.
    \]
    Let $J$ be the ideal of $\Z[q^{\pm 1}]$ generated by $q-1$.  It follows from \cref{Vbraid,Vbraidinv} that, for $\mu,\nu \in \word$,
    \[
        \bR_\DS
        \left(
            \thickstrand{\mu} \posupcross\ \thickstrand{\nu}
        \right)
        (W_{\mu \upobj \upobj \nu}) \subseteq W_{\mu \upobj \upobj \nu}
    \]
    and
    \[
        \bR_\DS^W
        \left(
            \thickstrand{\mu} \posupcross\ \thickstrand{\nu}
        \right)
        = 1_{W_\mu} \otimes \flip \otimes 1_{W_\nu} \mod JW_{\mu \upobj \upobj \nu},
    \]
    where $\flip \colon u \otimes w \to w \otimes u$ is the tensor flip.  Similarly, the other crossings also induce the flip map.  Thus, using also \cref{ev-,ev+,varphidef}, we see that, for $f \in M_\DS(\lambda,\one)$ and $\bi = (i_1,\dotsc,i_{\ell(\lambda)}) \in \Vset^{\ell(\lambda)}$,
    \[
        \bR_\DS(f) (v_{\bi,\lambda})
        =
        \begin{cases}
            \pm 1 \mod J & \text{if } i_a = i_b \text{ whenever $a$ and $b$ are joined by a string in $f$}, \\
            0 \mod J & \text{otherwise}.
        \end{cases}
    \]
    Now suppose that $m \ge \ell(\lambda)$ and that
    \begin{equation} \label{gravel}
        \sum_{f \in M_\DS(\lambda,\one)} \gamma_f \bR_\DS(f) = 0
        \qquad \text{for some } \gamma_f \in \C(q).
    \end{equation}
    Then, for each $f \in M_\DS(\lambda,\one)$, evaluating \cref{gravel} at $v_{\bi,\lambda}$ for some $\bi$ whose entries are equal if and only if are matched under the matching corresponding to $f$ shows that $\gamma_f = 0$.  Thus, the $\bR_\DS(f)$, $f \in M_\DS(\lambda,\one)$, are linearly independent if $m \ge \ell(\lambda)$.

    Now suppose that $\lambda \in \word$ and that
    \[
        \sum_{f \in M_\DS(\lambda,\one)} c_f(q,t) f = 0
        \qquad \text{for some } c_f(q,t) \in \kk.
    \]
    Multiplying by an appropriate power of $(q-q^{-1})$, we may assume that $c_f(q,t) \in \Z[q^{\pm 1}, t^{\pm 1}]$ for all $f \in M_\DS(\lambda,\one)$.  Applying $\bR_\DS$, we have
    \[
        \sum_{f \in M_\DS(\lambda,\one)} c_f(q,q^m) \bR_\DS(f) = 0
        \qquad \text{for all } m \ge \ell(\lambda).
    \]
    Since the $\bR_\DS(f)$, $f \in M_\DS(\lambda,\one)$, are linearly independent, we have $c_f(q,q^m)=0$ for all $f \in M_\DS(\lambda,\one)$ and $m \ge \ell(\lambda)$.  As there are infinitely many choices of $m$, it follows that $c_f(q,t)=0$ for all $f$.  Thus, the set $M_\DS(\lambda,\one)$ is linearly independent, as desired.
\end{proof}

For $r,s \in \N$, define
\[
    M_\iqB(r,s) = \bF(M_\DS(\upobj^r,\upobj^s)).
\]

\begin{cor} \label{iqBbasis}
    For $r,s \in \N$, the morphism space $\Hom_{\iqB(q,t)}(\Bobj_r,\Bobj_s)$ is a free $\kk$-module with basis $M_\iqB(r,s)$.
\end{cor}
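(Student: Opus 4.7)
The plan is straightforward: transport the basis theorem for $\DS(q,t)$ across the equivalence of module categories $\bF\colon \DS(q,t) \to \iqB(q,t)$ established in \cref{equivthm}. The crucial observation is that on hom spaces between the specific objects $\upobj^r, \upobj^s \in \DS(q,t)$, the functor $\bF$ restricts to a $\kk$-linear isomorphism
\[
    \bF \colon \Hom_{\DS(q,t)}(\upobj^r, \upobj^s) \xrightarrow{\ \cong\ } \Hom_{\iqB(q,t)}(\Bobj_r,\Bobj_s).
\]
Granting this, the corollary is immediate: since $M_\DS(\upobj^r,\upobj^s)$ is a $\kk$-basis of the source by \cref{basis}, its image $M_\iqB(r,s) = \bF(M_\DS(\upobj^r,\upobj^s))$ is a $\kk$-basis of the target.

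To justify the isomorphism above, I would first note that $\bF(\upobj^r) = \Bobj_r$. Indeed, $\bF(\one) = \Bobj_0$ by definition, and since $\bF$ is a \emph{strict} morphism of $\OS(q,t)$-modules, we have $\bF(\upobj^r) = \bF(\one) \otimes \upobj^{\otimes r} = \Bobj_0 \otimes \upobj^{\otimes r} = \Bobj_r$, where the last equality uses the action of $\OS(q,t)$ on $\iqB(q,t)$ defined in \cref{sec:iqBmodstruct}. Thus $\bF$ does restrict to a $\kk$-linear map between the stated hom spaces.

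Surjectivity follows from the equality $\bF \bG = \id_{\iqB(q,t)}$ established in the proof of \cref{equivthm}: any morphism $g \in \Hom_{\iqB(q,t)}(\Bobj_r,\Bobj_s)$ equals $\bF(\bG(g))$, and $\bG(g) \in \Hom_{\DS(q,t)}(\upobj^r,\upobj^s)$ because $\bG(\Bobj_r) = \upobj^r$ by construction of $\bG$. For injectivity, I would use the natural isomorphism $\eta \colon \id_{\DS(q,t)} \Rightarrow \bG\bF$ from \cref{equivthm}. From the recursive definition \cref{eta}, one sees that $\eta_{\upobj^r} = 1_{\upobj^r}$ for every $r \in \N$. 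Naturality of $\eta$ then gives, for any $f \in \Hom_{\DS(q,t)}(\upobj^r, \upobj^s)$,
\[
    \bG\bF(f) = \eta_{\upobj^s} \circ f \circ \eta_{\upobj^r}^{-1} = f,
\]
so $\bF$ is injective on this hom space. Combining injectivity and surjectivity yields the desired bijection.

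There is no real obstacle here; the proof is a formal consequence of \cref{basis,equivthm} once one observes that $\eta$ acts trivially on words consisting only of $\upobj$'s. I would present the argument in a single short paragraph, explicitly citing \cref{basis,equivthm} and the recursive formula \cref{eta} for the components of $\eta$.
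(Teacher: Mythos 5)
Your proof is correct and takes essentially the same route as the paper, which simply observes that the statement "follows immediately from Theorem \ref{equivthm}"; you have just filled in the routine details (that $\bF(\upobj^r)=\Bobj_r$, that $\bF\bG=\id$ gives surjectivity, and that $\eta_{\upobj^r}=1_{\upobj^r}$ gives injectivity on these hom spaces). No issues.
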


\begin{proof}
    This follows immediately from \cref{equivthm}.
\end{proof}

Let $A \subseteq \C(q) = \kk$ be the $\C$-subalgebra of rational functions regular at $q=1$.  This is a local ring with maximal ideal $\fm = A(q-1)$.  In what follows, we identify $\C$ with $A/\fm$.  For $d \in \C$, let $\Brauer(d)$ denote the Brauer category, as described in \cite[Th.~2.6]{LZ15}.

\begin{cor} \label{Guilin}
    There is an isomorphism of $\C$-linear categories from $\iqB_A(q,q^{m-2n}) \otimes_A \C$ to the Brauer category $\Brauer(m-2n)$ given by
    \[
        \begin{tikzpicture}[centerzero]
            \draw (-0.15,0.2) -- (-0.15,0.1) arc(180:360:0.15) -- (0.15,0.2);
            \draw[multi] (0.45,-0.2) \botlabel{r} -- (0.45,0.2);
        \end{tikzpicture}
        \mapsto
        \begin{tikzpicture}[centerzero]
            \draw (-0.15,0.2) -- (-0.15,0.1) arc(180:360:0.15) -- (0.15,0.2);
            \draw[multi] (0.45,-0.2) \botlabel{r} -- (0.45,0.2);
        \end{tikzpicture}
        \ ,\quad
        \begin{tikzpicture}[centerzero]
            \draw (-0.15,-0.2) -- (-0.15,-0.1) arc(180:0:0.15) -- (0.15,-0.2);
            \draw[multi] (0.45,-0.2) \botlabel{r} -- (0.45,0.2);
        \end{tikzpicture}
        \mapsto
        \begin{tikzpicture}[centerzero]
            \draw (-0.15,-0.2) -- (-0.15,-0.1) arc(180:0:0.15) -- (0.15,-0.2);
            \draw[multi] (0.45,-0.2) \botlabel{r} -- (0.45,0.2);
        \end{tikzpicture}
        \ ,\quad
        \thickstrand{r} \poscross\ \thickstrand{s},\
        \thickstrand{r} \negcross\ \thickstrand{s}
        \mapsto
        \thickstrand{r}
        \begin{tikzpicture}[centerzero]
            \draw (0.2,-0.2) -- (-0.2,0.2);
            \draw (-0.2,-0.2) -- (0.2,0.2);
        \end{tikzpicture}
        \ \thickstrand{s},
        \qquad r,s \in \N.
    \]
\end{cor}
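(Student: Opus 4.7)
The plan is to verify that the given assignment is a well-defined functor on the specialization $\iqB_A(q,q^{m-2n}) \otimes_A \C$ and then use the basis theorem to deduce that it is an isomorphism. The crucial specialization observations are that, under $q \mapsto 1$, we have $t = q^{m-2n} \mapsto 1$, the differences $q - q^{-1}$ and $t - t^{-1}$ specialize to $0$, and the ratio $(t - t^{-1})/(q - q^{-1})$, interpreted in $A$ as $[m - 2n]$, specializes to $m - 2n$. Under the assignment in the statement, both types of crossings in $\iqB$ map to the single symmetric Brauer crossing.

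For well-definedness, one verifies that each defining relation of $\iqB$ specializes to a valid relation in $\Brauer(m-2n)$. The braid relations \cref{Bbraid} become the standard braid relations on the Brauer crossing. The skein relation \cref{Bskein} specializes to $0 = 0$ on both sides, trivially preserved. The first three curl relations in \cref{Bcurl} have coefficients $t, q, q$ which all become $1$, matching the Reidemeister-I-type and cap/cup-through-crossing relations, while the bubble relation evaluates to $m-2n$ as required. Finally, the humps relations \cref{Bhumps} and the commutativity relation \cref{Bcommute} are topological in nature and reduce, via the symmetry of the Brauer crossing, to standard planar-isotopy relations of $\Brauer(m-2n)$.

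For the isomorphism, one uses \cref{iqBbasis}: the morphism space $\Hom_{\iqB_A(q,q^{m-2n})}(\Bobj_r, \Bobj_s)$ is free over $A$ with basis $M_\iqB(r,s) = \bF(M_\DS(\upobj^r,\upobj^s))$, and by \cref{basis} this basis is indexed by perfect matchings on $r+s$ endpoints. Tensoring with $\C$ yields a free $\C$-module of the same dimension as $\Hom_{\Brauer(m-2n)}(r,s)$, which also has a basis indexed by perfect matchings on $r+s$ endpoints. The image of a reduced diagram under $\Phi$ is, after invoking the relations of $\Brauer(m-2n)$ to planarize via the symmetric crossing, precisely the Brauer diagram corresponding to the same underlying matching. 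This yields a bijection of bases and completes the proof.

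The main obstacle lies in the last step: verifying that the image of each distinguished reduced diagram in $M_\iqB(r,s)$ equals (not merely equals up to nonzero scalar) the corresponding Brauer basis element. This requires checking that no scalars arise when untangling the crossings of a reduced diagram via the symmetric-crossing relations in $\Brauer(m-2n)$, and when reconciling the positions of cups and caps. Since every coefficient appearing in the relevant specialized relations is $1$, no scalar discrepancies can arise, but the bookkeeping must be done carefully, particularly for reduced diagrams containing critical points of cups and caps close to crossings.
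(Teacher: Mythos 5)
Your proposal is correct and follows essentially the same route as the paper: check that each defining relation of $\iqB(q,t)$ specializes to a valid relation of $\Brauer(m-2n)$ at $q=t=1$ (with the bubble value $[m-2n]\mapsto m-2n$), then combine the basis theorem \cref{iqBbasis} with the matching-indexed basis of the Brauer category from \cite[Th.~2.6]{LZ15} to see that the functor carries a basis bijectively to a basis. The only minor remark is that your worry about scalars in the last step is not actually needed for the isomorphism claim (nonzero scalars would not spoil bijectivity on Hom spaces), and in any case all coefficients in \cref{water3,water4} specialize to $1$, exactly as the paper notes.
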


\begin{proof}
    It is straightforward to verify that the given functor is well defined (that is, the relations in \cref{iqBdef} are satisfied).  It follows from \cite[Th.~2.6]{LZ15} that, for each $r,s \in \N$ the image of $M_\iqB(r,s)$ under the above functor is a basis of $\Brauer(m-2n)$.  Here we use the fact that
    \[
        \bF \left( \thickstrand{\lambda} \leftcup \thickstrand{\mu} \right)
        = q^{-1} t
        \begin{tikzpicture}[centerzero,cscale]
            \draw[multi] (-0.4,0.2) \braidup (-0.8,0.6) \toplabel{\ell(\lambda)};
            \draw[wipe] (-0.4,0.6) \braiddown (-0.8,0.2);
            \draw (-0.4,0.6) \braiddown (-0.8,0.2)  -- (-0.8,-0.2) to[out=down,in=down,looseness=2] (-0.4,-0.2) \braidup (0,0.2) -- (0,0.6);
            \draw[wipe] (0,-0.2) \braidup (-0.4,0.2);
            \draw[multi] (0,-0.6) -- (0,-0.2) \braidup (-0.4,0.2);
            \draw[multi] (0.4,-0.6) -- (0.4,0.6) \toplabel{\ell(\mu)};
        \end{tikzpicture}
        \text{ is equal to }
        \begin{tikzpicture}[centerzero,cscale]
            \draw[multi] (-0.8,-0.6) \braidup (-0.8,0.6) \toplabel{\ell(\lambda)};
            \draw (-0.4,0.6) -- (-0.4,0) to[out=down,in=down,looseness=2] (0,0) -- (0,0.6);
            \draw[multi] (0.4,-0.6) -- (0.4,0.6) \toplabel{\ell(\mu)};
        \end{tikzpicture}
        \text{ at $q=1$},
    \]
    and similarly for the other diagrams in $\iqB(q,t)$ appearing in \cref{water3,water4}.  It follows that the functor in the statement of the lemma is an isomorphism.
\end{proof}

\subsection{Endomorphism algebras}

For $r \in \N$, the \emph{iquantum Brauer algebra} $\mathrm{B}_r(q,t)$ is the associative $\kk$-algebra with generators $\sigma_1,\sigma_2,\dotsc,\sigma_{r-1}$ and $e$, subject to the relations
\begin{gather*}
    \sigma_i \sigma_{i+1} \sigma_i = \sigma_{i+1} \sigma_i \sigma_{i+1},\qquad 1 \le i \le r-2,
    \\
    \sigma_i \sigma_j = \sigma_j \sigma_i,\qquad 1 \le i,j \le r-1,\ |i-j| >1,
    \\
    \sigma_i^2 = (q-q^{-1}) \sigma_i + 1,\qquad 1 \le i \le r-1,
    \\
    e^2 = \frac{t-t^{-1}}{q-q^{-1}} e,
    \\
    e \sigma_1 = \sigma_1 e = q e,\quad
    e \sigma_2 e = t e,\quad
    e \sigma_i = \sigma_i e,\qquad i > 2,
    \\
    \sigma_2 \sigma_3 \sigma_1^{-1} \sigma_2^{-1} e_{(2)}
    = e_{(2)}
    = e_{(2)} \sigma_2 \sigma_3 \sigma_1^{-1} \sigma_2^{-1},
    \quad \text{where } e_{(2)} = e \sigma_2 \sigma_3 \sigma_1^{-1} \sigma_2^{-1} e.
\end{gather*}

\begin{rem} \label{terminology}
    The algebra $\mathrm{B}_r(q,t)$ is isomorphic to the algebra $Br_r(t^2,q^2)$ of \cite[\S3.1]{Wen12} via the map
    \[
        \mathrm{B}_r(q,t) \to Br_r(t^2,q^2),\qquad
        e \mapsto qt^{-1} e,\quad
        \sigma_i \mapsto q^{-1} g_i,\quad
        1 \le i \le r-1,
    \]
    as can be checked by direct verification of the defining relations.
    \details{
        The relation \cite[(1.7)]{Wen12} is equivalent to $g_i^2 = (q^2-1)g_i + q^2$.  Replacing $g_i$ by $q \sigma_i$, this becomes $\sigma_i^2 = (q-q^{-1}) \sigma_i + 1$.  The relation \cite[(E1)$'$]{Wen12} is $e^2 = \frac{t^2-1}{q^2-1}$.  After replacing $e$ by $q^{-1}te$, this becomes $e^2 = \frac{t-t^{-1}}{q-q^{-1}} e$.  Similarly, the relations \cite[(E2)$'$]{Wen12} become
        \[
            e \sigma_1 = q e = \sigma_1 e,\quad
            e \sigma_2 e = t e,\quad
            e \sigma_2^{-1} e = t^{-1} e,\quad
            e \sigma_i = \sigma_i e,\qquad i>2,
        \]
        and the relation \cite[(E3)]{Wen12} becomes the relation
        \[
            \sigma_2 \sigma_3 \sigma_1^{-1} \sigma_2^{-1} e_{(2)}
            = e_{(2)}
            = e_{(2)} \sigma_2 \sigma_3 \sigma_1^{-1} \sigma_2^{-1},
            \quad \text{where } e_{(2)} = e \sigma_2 \sigma_3 \sigma_1^{-1} \sigma_2^{-1} e.
        \]
    }
    Similar algebras appeared previously in \cite[Def.~2.1]{Mol03}.  The algebras $\mathrm{B}_r(q,t)$ have been called \emph{$q$-Brauer algebras} in the literature, presumably because some authors view them as deformations of Brauer algebras and the notation used for the deformation parameter is often $q$.  We prefer the term \emph{iquantum Brauer algebras} to avoid possible confusion with Birman--Murakami--Wenzl (BMW) algebras, which are also deformations of Brauer algebras, and also to avoid terminology involving notation.
\end{rem}

\begin{prop}\label{river}
    For all $r \in \N$ and $\lambda \in \word$ with $\ell(\lambda)=r$, we have isomorphisms of algebras
    \[
        \End_{\DS(q,t)}(\lambda)
        \cong \End_{\iqB(q,t)}(\Bobj_r)
        \cong \mathrm{B}_r(q,t).
    \]
\end{prop}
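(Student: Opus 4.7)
The plan is to prove the two isomorphisms in turn. First, the isomorphism $\End_{\DS(q,t)}(\lambda) \cong \End_{\iqB(q,t)}(\Bobj_r)$ will follow immediately from \cref{equivthm}: since $\bF$ is an equivalence of $\kk$-linear categories and is a strict morphism of $\OS(q,t)$-modules with $\bF(\one) = \Bobj_0$, we have $\bF(\lambda) = \Bobj_0 \otimes \lambda$, which equals $\Bobj_r$ by the module structure defined in \cref{camping}. Any equivalence of $\kk$-linear categories restricts to a $\kk$-algebra isomorphism on endomorphism rings.

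For the second isomorphism, I will construct an explicit $\kk$-algebra homomorphism $\Phi \colon \mathrm{B}_r(q,t) \to \End_{\iqB(q,t)}(\Bobj_r)$ by setting
\[
    \Phi(\sigma_i) := \thickstrand{i-1}\ \poscross\ \thickstrand{r-i-1},\qquad
    \Phi(e) :=
    \begin{tikzpicture}[centerzero]
        \draw (-0.15,0.3) -- (-0.15,0.1) arc(180:360:0.15) -- (0.15,0.3);
        \draw (-0.15,-0.3) -- (-0.15,-0.1) arc(180:0:0.15) -- (0.15,-0.3);
        \draw[multi] (0.45,-0.3) \botlabel{r-2} -- (0.45,0.3);
    \end{tikzpicture}
    \ ,
\]
and then verifying the defining relations of $\mathrm{B}_r(q,t)$ diagrammatically. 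The braid and distant-commutation relations among the $\sigma_i$ are \cref{Bbraid,Bcommute}; the quadratic relation $\sigma_i^2 = (q-q^{-1})\sigma_i + 1$ is the skein relation \cref{Bskein}; the relation $e^2 = \tfrac{t-t^{-1}}{q-q^{-1}}\, e$ arises from the closed bubble formed by stacking two copies of $\Phi(e)$, together with the bubble evaluation in \cref{Bcurl}; the identities $e\sigma_1 = \sigma_1 e = qe$ and $e\sigma_i = \sigma_i e$ for $i > 2$ follow from the curl relations in \cref{Bcurl} and the commutation \cref{Bcommute}; and $e\sigma_2 e = te$ follows from the first relation in \cref{Bhumps} applied to the crossing trapped between the cup of the lower $\Phi(e)$ and the cap of the upper $\Phi(e)$.

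To conclude that $\Phi$ is an isomorphism, I will combine surjectivity with a rank comparison. Surjectivity follows from \cref{iqBbasis}: the basis $M_\iqB(r,r)$ is indexed by perfect matchings of the $2r$ endpoints, and any such matching can be realized by a word in adjacent transpositions together with a single basic cup-cap on adjacent strands, yielding a product of $\Phi(\sigma_i)$'s and $\Phi(e)$'s whose leading term (in the filtration by number of crossings) is the chosen basis element; the lower-order correction terms lie in the image of $\Phi$ by induction on the number of crossings. For injectivity, the algebra $\mathrm{B}_r(q,t)$ is a free $\kk$-module of rank $(2r-1)!!$ by the basis theorem for the $q$-Brauer algebra (see \cite{Mol03,Wen12}, via the isomorphism in \cref{terminology}), which matches the cardinality of $M_\iqB(r,r)$; a surjection between free $\kk$-modules of equal finite rank is an isomorphism.

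The principal obstacle will be the verification of the last defining relation of $\mathrm{B}_r(q,t)$, which asserts $\sigma_2\sigma_3\sigma_1^{-1}\sigma_2^{-1}\, e_{(2)} = e_{(2)} = e_{(2)}\, \sigma_2\sigma_3\sigma_1^{-1}\sigma_2^{-1}$ for the element $e_{(2)} = e\sigma_2\sigma_3\sigma_1^{-1}\sigma_2^{-1}e$. The corresponding diagram involves several crossings wrapping a two-cup-cap configuration on the first four strands, and its reduction will demand a coordinated application of the extended humps relations in \cref{Bhumps,Bhumps2} together with the braid and commutation relations \cref{Bbraid,Bcommute}, possibly pushing cups and caps across crossings in the manner of \cref{camel}. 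A secondary technical point is making precise the crossing-induction used to witness surjectivity onto $M_\iqB(r,r)$, but this follows a standard pattern for skein-type categories.
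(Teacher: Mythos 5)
Your proposal is correct and follows essentially the same route as the paper: reduce to $\End_{\iqB(q,t)}(\Bobj_r)$ via \cref{equivthm}, define the same assignment $\sigma_i \mapsto$ the $i$th positive crossing and $e \mapsto$ the cup--cap on the first two strands, check the defining relations diagrammatically, and conclude using \cite[Th.~3.8]{Wen12} together with \cref{iqBbasis}. Your surjectivity-by-crossing-induction plus rank-comparison argument is just an expansion of the paper's one-line appeal to the two basis theorems, so no substantive difference.
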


\begin{proof}
    By  \cref{equivthm}, it suffices to prove the result for $\End_{\iqB(q,t)}(\lambda)$.  It is straightforward to verify that the map $\mathrm{B}_r(q,t) \to \End_{\iqB(q,t)}(\Bobj_r)$ given by
    \[
        e \mapsto
        \begin{tikzpicture}[centerzero]
            \draw[multi] (0.6,-0.3) \botlabel{r-2} -- (0.6,0.3);
            \draw (-0.15,0.3) -- (-0.15,0.2) arc(180:360:0.15) -- (0.15,0.3);
            \draw (-0.15,-0.3) -- (-0.15,-0.2) arc(180:0:0.15) -- (0.15,-0.3);
        \end{tikzpicture}
        ,\qquad
        \sigma_i \mapsto
        \begin{tikzpicture}[centerzero]
            \draw[multi] (-0.8,-0.3) \botlabel{i-1} -- (-0.8,0.3);
            \draw[multi] (0.8,-0.3) \botlabel{r-i-1} -- (0.8,0.3);
            \draw (0.15,-0.3) -- (-0.15,0.3);
            \draw[wipe] (-0.15,-0.3) -- (0.15,0.3);
            \draw (-0.15,-0.3) -- (0.15,0.3);
        \end{tikzpicture},
        \qquad 1 \le i \le r-1,
    \]
    satisfies the defining relations of $\mathrm{B}_r(q,t)$.  It then follows from the basis theorems \cite[Th.~3.8]{Wen12} and \cref{iqBbasis} that this map is in an isomorphism.
\end{proof}

\section{Fullness of the incarnation functors\label{sec:fullness}}

Our goal in this final section is to show that the functors $\bR_\DS$ and $\bR_\iqB$ are full.  In light of \cref{revolution}, it suffices to prove this for one particular choice of the parameters $\va_i$, $i \in I_\circ$.  Thus, throughout this section we make the choice \cref{oilers}.  This implies that $\tva_i = 1$ for all $i \in \Vset$; see \cref{kowloon}.

Recall that $A \subseteq \C(q)$ is the $\C$-subalgebra of rational functions regular at $q=1$, that $\fm = A(q-1)$ is its unique maximal ideal, and that we identify $\C$ with $A/\fm$.  Let $\UiA$ be the $A$-subalgebra of $\Ui$ generated by the elements \cref{tubeless}, let $\UisA$ be the $A$-subalgebra of $\Uis$ generated by $\UiA$ and $\sigma$, and let $V_A$ be the $A$-submodule of $V^+$ generated by $v_i^+$, $i \in \Vset$.  Since all the coefficients appearing in \cref{panda+,sloth} lie in $A$, we see that $V_A$ is a $\UisA$-module.  We also have the $A$-bilinear map
\[
    \beta_A = \ev^- \circ (\varphi^{-1} \times 1)|_{V_A \times V_A} \colon V_A \times V_A \to A.
\]
The elements $v_i := v_i^+ \otimes 1$, $i \in \Vset$, form a $\C$-basis of $V_\C := V_A \otimes_A \C$.  We have the induced $\C$-bilinear map
\[
    \beta_\C = \beta_A \otimes_A \C \colon V_\C \times V_\C \to \C,\qquad
    \beta_\C(v_i, v_j) = \delta_{i,\phi(j)} (-1)^{i+p(i)}.
\]

Instead of working with supergroups, we will work over the equivalent theory of Harish-Chandra pairs.  We refer the reader to \cite{Gav20} and the references cited therein for a proof of this equivalence.  A brief summary, well suited to the current paper, can also be found in \cite[\S3.6, \S7.5]{SS24}.  Let
\begin{align*}
    G_\textup{red}(\beta_\C) &= \{x \in \Aut(V_\C)_0 : \beta_\C(xv,xw) = \beta_\C(v,w) \text{ for all } v,w \in V\},
    \\
    \fg(\beta_\C) &= \{ x \in \End(V_\C) : \beta_\C(xv,w) = -(-1)^{\bar{x} \bar{v}} \beta_\C(v,xw) \text{ for all } v,w \in V\}.
\end{align*}
The pair $G(\beta_\C) := (G_\textup{red}(\beta_\C), \fg(\beta_\C))$ is a Harish-Chandra pair.  It follows from \cref{pump} that $\beta_\C$ is supersymmetric and nondegenerate.  Thus, we may identify $\fg(\beta_\C)$ with $\osp(m|2n)$ and $G(\beta_\C)$ with the orthosymplectic supergroup $\OSp(m|2n)$.

The action of $\sigma$ on $V^+$, defined in \cref{panda+}, induces an automorphism $\sigma$ of $V_\C$.  When $m>0$, this automorphism has determinant $-1$, and so lies in the connected component of $G_\textup{red}$ not containing the identity.  Thus, as explained in \cite[\S3.6]{SS24}, for any $r,s \in \N$, we have
\begin{multline} \label{coffee}
    \Hom_{\OSp(m|2n)}(V_\C^{\otimes r}, V_\C^{\otimes s})
    = \Hom_{\sigma,\osp(m|2n)}(V_\C^{\otimes r}, V_\C^{\otimes s})
    \\
    := \{f \in \Hom_{\osp(m|2n)}(V_\C^{\otimes r}, V_\C^{\otimes s}) : f(\sigma v)= \sigma f(v) \ \forall\ v \in V_\C^{\otimes r} \}.
\end{multline}

\begin{theo} \label{fullthm}
    The functors $\bR_\DS$ and $\bR_\iqB$ are full.
\end{theo}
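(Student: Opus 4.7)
The plan is to reduce fullness to the classical first fundamental theorem (FFT) of invariant theory for the orthosymplectic supergroup $\OSp(m|2n)$ via specialization at $q=1$. Since $\bR_\iqB = \bR_\DS \circ \bG$ and $\bG$ is an equivalence of $\OS$-modules by \cref{equivthm}, fullness of $\bR_\DS$ and fullness of $\bR_\iqB$ are equivalent (the two Hom maps are related by the isomorphism $\bF$ on source and by $\bR_\DS(\eta_\lambda^{-1})$ on target). Thus it suffices to show that for all $r, s \in \N$, the map $\bR_\iqB \colon \Hom_{\iqB(q, q^{m-2n})}(\Bobj_r, \Bobj_s) \to \Hom_\Uis(V^{\otimes r}, V^{\otimes s})$ is surjective, where $V := V^+$.

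First I would pass to an integral form. A direct inspection of the formulas in \cref{coev,ev-,Vbraid,Vbraidinv,varphidef,button} shows that for every morphism $\phi$ in $\iqB_A(q, q^{m-2n})$, the image $\bR_\iqB(\phi)$ lands in $H_A := \Hom_{\UisA}(V_A^{\otimes r}, V_A^{\otimes s})$, and the basis $M_\iqB(r,s)$ from \cref{iqBbasis} is a free $A$-basis of the source over $A$. The module $H_A$ is a pure $A$-submodule of the free $A$-module $\Hom_A(V_A^{\otimes r}, V_A^{\otimes s})$: if $ag \in H_A$ for some nonzero $a \in A$ and $g \in \Hom_A(V_A^{\otimes r}, V_A^{\otimes s})$, then for each $x \in \UisA$ and $v \in V_A^{\otimes r}$ the identity $a(xg(v) - g(xv)) = 0$ forces $xg(v) = g(xv)$ by cancellation inside $V^{\otimes s}$, so $g \in H_A$. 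Since $A$ is a DVR, $H_A$ is then a free direct summand, yielding both
\[
H_A \otimes_A \C(q) = \Hom_\Uis(V^{\otimes r}, V^{\otimes s})
\quad\text{and}\quad
H_A \otimes_A \C \hookrightarrow \Hom_\C(V_\C^{\otimes r}, V_\C^{\otimes s}).
\]

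The crux is to identify the specialization of $\bR_\iqB$ at $q=1$. Using the isomorphism $\iqB_A(q, q^{m-2n}) \otimes_A \C \cong \Brauer(m-2n)$ of \cref{Guilin}, together with the fact that $\tva_i = 1$ under our choice of parameters, one checks on generators that this specialization is the classical Brauer functor: cups and caps specialize to the orthosymplectic pairing on $V_\C$, and crossings to the super-flip. The image lies in $\Hom_{\OSp(m|2n)}(V_\C^{\otimes r}, V_\C^{\otimes s})$ precisely because $\sigma$ cuts out the full supergroup rather than just $\osp(m|2n)$, as recorded in \cref{coffee}. The FFT of invariant theory for $\OSp(m|2n)$ (Lehrer--Zhang) then tells us that this Brauer functor is full. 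Combining with the embedding from the previous step, the $q=1$ specializations of the elements of $M_\iqB(r,s)$ span $H_A / \fm H_A$ over $\C$. Nakayama's lemma, applicable because $H_A$ is finitely generated over the local ring $A$, lifts this to a spanning statement over $A$; tensoring with $\C(q)$ then yields surjectivity of $\bR_\iqB$.

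The principal obstacle will be the identification in the third paragraph: explicitly matching the $q=1$ limit of the various ingredients of $\bR_\iqB$ (the toggle $\varphi$, the evaluation $\ev^-$, the braiding $T_{++}$, and the Brauer structure maps of \cref{sec:iqBmodstruct}) with the classical Brauer functor on $\OSp(m|2n)$-modules, and invoking the correct supergroup form of the FFT (which requires $\OSp$, not merely $\osp$, precisely because only the former has enough invariants to be cut out by the diagrammatic category at $q=1$). The remaining ingredients — the purity of $H_A$, flat base change, and the Nakayama lift — are standard once the $q=1$ picture is in place.
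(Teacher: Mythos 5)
Your proposal is correct and follows essentially the same route as the paper: reduce to $\bR_\iqB$ via the equivalence $\bG$, pass to the integral form over the local ring $A$, identify the $q=1$ specialization with the classical Brauer functor to $\OSp(m|2n)\tmod$ (Corollary \ref{Guilin} plus the Lehrer--Zhang first fundamental theorem), and lift surjectivity by Nakayama and flat base change. The only step you should make sure to carry out in full is the identification $\UisA[\C]\tmod \simeq \OSp(m|2n)\tmod$, which in the paper rests on an explicit check that the specialized generators of $\UiA \otimes_A \C$ generate $\osp(m|2n)$ as a Lie superalgebra.
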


\begin{proof}
    To simplify notation, let $\iqB_A = \iqB_A(q,q^{m-2n})$ and $\iqB_{\C(q)} = \iqB_{\C(q)}(q,q^{m-2n})$.  By \cref{iqBincar,ginger}, it suffices to prove that $\bR_\iqB$ is full.  Since all the coefficients appearing in \cref{Vbraid,Vbraidinv,ev-,coev,varphidef} lie in $A$, we have an incarnation functor
    \[
        \bRA_\iqB \colon \iqB_A \to \UisA\tmod.
    \]

    Consider the $\C$-algebra homomorphism
    \begin{equation}
        \rho_\C \colon \UisA \otimes_A \C \to \End_A(V_A) \otimes_A \C
        \cong \End_\C(V_\C)
    \end{equation}
    induced by the $\UisA$-module structure on $V_A$.  It follows from \cref{sloth,panda+} that
    \begin{align} \label{hack1}
        \rho_\C(B_i \otimes 1) &= E_{i+1,i} + \delta_{i>0} E_{i,i+1} + \delta_{i<0} E_{i-1,i+2} + \delta_{i,0} E_{-1,1},& i \in I_\circ,
        \\ \label{hack2}
        \rho_\C(E_{2k-1} \otimes 1) &= E_{2k-1,2k},\quad
        \rho_\C(F_{2k-1} \otimes 1) = E_{2k,2k-1},&
        1-n \le k \le 0,
    \end{align}
    where $E_{i,j}$ is the linear map given by $E_{i,j} v_k^+ = \delta_{j,k} v_i^+$.  A direct computation shows that the elements \cref{hack1,hack2} generate $\fg(\beta_\C)$ as a Lie superalgebra.
    \details{
        Writing vectors in $V_\C$ as column matrices whose entries are the coordinates of the vectors in the basis $v_i$, $i \in \Vset$, we have
        \[
            \beta_\C(u,v) = u^\mathrm{st} J v,
            \qquad \text{where} \quad
            J = \sum_{i \in \Vset} (-1)^i E_{i,\phi(j)}. 
        \]
        Thus, $X \in \fg(\beta_\C)$ if and only if $X^\mathrm{st} J = - JX$.  Equivalently,
        \[
            X =(X_{i,j})_{i,j \in \Vset} \in \fg(\beta_\C) \iff X_{i,j} = - (-1)^{p(i)p(j)+p(i)+i+j} X_{\phi(j),\phi(i)}.
        \]
        It follows that $\fg(\beta_\C)$ is spanned by the elements
        \begin{align*} \tag{$\star 1$}
            &E_{i,j} - (-1)^{i+j} E_{j,i}, & 1 \le i < j \le m,
            \\ \tag{$\star 2$}
            &E_{2k,2l} - E_{2l-1,2k-1}, & 1-n \le k,l \le 0,
            \\ \tag{$\star 3$}
            &E_{2k,2l-1} + E_{2l,2k-1}, & 1-n \le k \le l \le 0,
            \\ \tag{$\star 4$}
            &E_{2k-1,2l} + E_{2l-1,2k}, & 1-n \le k \le l \le 0,
            \\ \tag{$\star 5$}
            &E_{i,2k} - (-1)^i E_{2k-1,i}, & i > 0,\ 1-n \le k \le 0,
            \\ \tag{$\star 6$}
            &E_{i,2k-1} + (-1)^i E_{2k,i}, & i > 0,\ 1-n \le k \le 0.
        \end{align*}
        Let $\ft$ be the Lie superalgebra generated by \cref{hack1,hack2}.  Since all the elements \cref{hack1,hack2} lie in $\fg(\beta_\C)$, we have $\ft \subseteq \fg(\beta_\C)$.  We prove the reverse inclusion by showing that the elements ($\star 1$)--($\star 6$) lie in $\ft$.  In what follows, $1 \le i,j \le m$ and $1-n \le k,l \le 0$.
       
        From \cref{hack1}, we have $E_{i,i+1} + E_{i+1,i} \in \ft$.  Since
        \[
            [E_{i,j} - (-1)^{i+j} E_{j,i}, E_{j,j+1} + E_{j+1,j}]
            = E_{i,j+1} - (-1)^{i+j+1} E_{j+1,i},
        \]
        it follows by induction that all the elements ($\star 1$) lie in $\ft$.
        
        By \cref{hack2}, the $k=l$ cases of ($\star 3$) and ($\star 4$) lie in $\ft$.  Taking $i=2k$ in \cref{hack1}, we see that the $l=k+1$ case of ($\star 4$) lies in $\ft$.  We also have
        \begin{gather*} \tag{$\ast 1$}
            [E_{2k,2k-1}, E_{2k-1,2l} + E_{2l-1,2k}]
            = 2^{\delta_{k,l}} (E_{2k,2l} - E_{2l-1,2k-1}),
            \\ \tag{$\ast 2$}
            [E_{2l,2l-1}, E_{2k-1,2l} + E_{2l-1,2k}]
            = 2^{\delta_{k,l}} (E_{2l,2k} - E_{2k-1,2l-1}),
            \\ \tag{$\ast 3$}
            [E_{2k,2l} - E_{2l-1,2k-1}, E_{2l,2l-1}]
            = E_{2k,2l-1} + E_{2l,2k-1},
            \\ \tag{$\ast 4$}
            [E_{2k-1,2k}, E_{2k,2l} - E_{2l-1,2k-1}]
            = E_{2k-1,2l} + E_{2l-1,2k},
            \\ \tag{$\ast 5$}
            [E_{2k,2l-1} + E_{2l,2k-1}, E_{2l-1,2l+2} + E_{2l+1,2l}]
            = E_{2k,2l+2} - E_{2l+1,2k-1}.
        \end{gather*}
        Note that, for $1-n \le k',l' \le 0$:
        \begin{itemize}
            \item By ($\ast 1$) and ($\ast 2$), if $\ft$ contains ($\star 4$) for $(k,l)=(k',l')$, then it contains ($\star 2$) for $(k,l) = (k',l')$ and $(k,l) = (l',k')$.
            \item By ($\ast 3$), if $\ft$ contains ($\star 2$) for $(k,l)=(k',l')$, then it contains ($\star 3$) for $(k,l)=(k',l')$.
            \item By ($\ast 4$), if $\ft$ contains ($\star 2$) for $(k,l)=(k',l')$, then it contains ($\star 4$) for $(k,l)=(k',l')$.
            \item By ($\ast 5$), if $\ft$ contains ($\star 3$) for $(k,l)=(k',l')$, then it contains ($\star 2$) for $(k,l)=(k',l'+1)$.
        \end{itemize}
        It follows by induction that $\ft$ contains ($\star 2$), ($\star 3$), and ($\star 4$) for all $k$ and $l$ in the given ranges.
        
        From \cref{hack1}, we have $E_{1,0} + E_{-1,1} \in \ft$.  This is the $i=1$, $k=0$ case of ($\star 5$).  Since
        \[
            [E_{i,1} + (-1)^i E_{1,i}, E_{1,0} + E_{-1,1}]
            = E_{i,0} - (-1)^i E_{-1,i},
        \]
        all the elements ($\star 5$) with $k=0$ lie in $\ft$.  Then, noting that
        \[
            [E_{i,2k} - (-1)^i E_{2k-1,i}, E_{2k,2k+2} - E_{2k+1,2k-1}]
            = E_{i,2k+2} - (-1)^i E_{2k+1,i},
        \]
        it follows by induction that the elements ($\star 5$) lie in $\ft$ for all $i$ and $k$ in the given ranges.  Finally, since
        \[
            [E_{i,2k} - (-1)^i E_{2k-1,i}, E_{2k,2k-1}]
            = E_{i,2k-1} + (-1)^i E_{2k,i},
        \]
        we see that the elements ($\star 6$) lie in $\ft$ for all $i$ and $k$ in the given ranges.
    }
    Furthermore, for $r \in \N$, it follows from \cref{comult} that the action of $\UiA[\C] := \UiA \otimes_A \C$ on $V_\C^{\otimes r}$ is determined by the usual comultiplication for Lie superalgebras.  Thus, we have an equivalence of categories
    \[
        \UiA[\C]\tmod \simeq \fg(\beta_\C)\tmod.
    \]
    Together with \cref{coffee}, this implies that we have an equivalence of categories
    \[
        \UisA[\C]\tmod \simeq \OSp(m|2n)\tmod.
    \]

    The above discussion shows that we have a commutative diagram of functors
    \[
        \begin{tikzcd}
            \iqB_A \otimes_A \C \arrow[d,"\cong"'] \arrow[rr,"\bRA_\iqB \otimes 1"] & & \UisA[\C]\tmod \arrow[d,"\simeq"] \\
            \Brauer(m-2n) \arrow[rr,"\bR_\Brauer"] & & \OSp(m|2n)\tmod
        \end{tikzcd}
    \]
    where the left vertical map is the isomorphism of \cref{Guilin} and $\bR_\Brauer$ is the monoidal incarnation functor of \cite[Th.~5.4]{LZ17} determined by
    \[
        \Bcap \mapsto \beta_\C,\qquad \symcross \mapsto \flip.
    \]
    (Here we used the fact that $T_{++} \otimes_A 1 = \flip$, which follows easily from \cref{Vbraid}.)
    By \cite[Thm.~5.6]{LZ17}, the functor $\bR_\Brauer$ is full; see also \cite{ES16}.  Thus, for all $r,s \in \N$, the map
    \begin{equation} \label{wild}
        \bRA_\iqB \otimes 1 \colon \Hom_{\iqB_A}(\Bobj_r,\Bobj_s) \otimes_A \C \to \Hom_{\UisA}(V_A^{\otimes r},V_A^{\otimes s}) \otimes_A \C
    \end{equation}
    is surjective.

    Being a localization of a noetherian ring, the ring $A$ is also Noetherian.  Thus, for all $r,s \in \N$, the space $\Hom_{\UisA}(V_A^{\otimes r},V_A^{\otimes s})$ is finitely generated as an $A$-module, since it is a submodule of the free, finitely-generated $A$-module $\Hom_A(V_A^{\otimes r},V_A^{\otimes s})$.  The space $\Hom_{\iqB_A}(\Bobj_r,\Bobj_s)$ is also finitely generated as an $A$-module by \cref{basis}.  Since \cref{wild} is surjective, it follows from Nakayama's Lemma that
    \[
        \bRA_\iqB \colon \Hom_{\iqB_A}(\Bobj_r,\Bobj_s) \to \Hom_{\UisA}(V_A^{\otimes r},V_A^{\otimes s})
    \]
    is surjective.  Then, extending the ground ring from $A$ to $\C(q)$, we conclude that
    \[
        \bR_\iqB = \bRA_\iqB \otimes 1
        \colon \Hom_{\iqB_{\C(q)}}(\Bobj_r,\Bobj_s) \cong \Hom_{\iqB_A}(\Bobj_r,\Bobj_s) \otimes_A \C(q) \to \Hom_{\Uis}(V^{\otimes r},V^{\otimes s})
    \]
    is also surjective, as desired.
\end{proof} 


\bibliographystyle{alphaurl}
\bibliography{DOSQB}

\end{document}